\documentclass[12pt,a4paper]{amsart}
\usepackage{amsfonts,color}
\usepackage{amsthm}
\usepackage{amsmath}
\usepackage{amscd}
\usepackage[latin2]{inputenc}
\usepackage{t1enc}
\usepackage[mathscr]{eucal}
\usepackage{indentfirst}
\usepackage{graphicx}
\usepackage{graphics}
\usepackage{pict2e}
\usepackage{epic}
\usepackage{url}
\usepackage{epstopdf}
\usepackage{comment}
\usepackage{todonotes}
\usepackage{stmaryrd}
\usepackage{amssymb}

\allowdisplaybreaks

\numberwithin{equation}{section}
\usepackage[margin=2.2cm]{geometry}

\usepackage{pgfplots}
\usepackage{xcolor}
\usepackage{tikz}
\usetikzlibrary{matrix,arrows,decorations.pathmorphing}
\usetikzlibrary{calc,decorations.pathreplacing}
\usetikzlibrary{quotes,angles}
\usetikzlibrary{shapes}
\usetikzlibrary{patterns}

\tikzstyle{vertex}=[draw=black,circle,fill=black,minimum size=6pt, inner sep=0pt, outer sep=0pt,text=black,line width=0mm]

\tikzstyle{Sqvertex}=[draw=black,shape=rectangle, minimum size=10pt, fill=white]

\tikzstyle{Cvertex}=[draw=black,shape=circle, minimum size=6pt, fill=white]

\tikzstyle{vertex_blue}=[draw=black,circle,fill=blue,minimum size=6pt, inner sep=0pt, outer sep=0pt,text=black,line width=0mm]
\tikzstyle{vertex_red}=[draw=black,circle,fill=red,minimum size=6pt, inner sep=0pt, outer sep=0pt,text=black,line width=0mm]
\tikzstyle{vertex_green}=[draw=black,circle,fill=green,minimum size=6pt, inner sep=0pt, outer sep=0pt,text=black,line width=0mm]
\tikzstyle{c0}=[shape=circle, minimum size=4pt, fill=white]
\tikzstyle{c1}=[shape=rectangle, minimum size=7pt, fill=red]
\tikzstyle{c2}=[shape=diamond, minimum size=10pt, fill=blue]
\tikzstyle{mybox} = [rectangle, rounded corners, minimum width=3cm, minimum height=1cm,text centered, draw=black]
\tikzset{base/.style = {rectangle, rounded corners, draw=black,
                           minimum width=3cm, minimum height=1cm,
                           text centered}}

\usepgfplotslibrary{fillbetween}
\pgfplotsset{mystyle/.style={%
        xmin=-2,
        xmax=7.9,
        ymin=-1,
        xtick = {1,3},
        xticklabels = {{1},$d-1$},
        ytick = {1}
    }
}

\pgfplotsset{mystyle2/.style={%
        xmin=-2,
        xmax=7.9,
        ymin=-1,
        ymax=6,
        xtick = {1,3},
        xticklabels = {{1},$d-1$},
        ytick = {1}
    }
}

\definecolor{darkerblue}{HTML}{065A82} 
\definecolor{lighterblue}{HTML}{1C7293} 

\theoremstyle{plain}
\newtheorem{Th}{Theorem}[section]
\newtheorem{Lemma}[Th]{Lemma}
\newtheorem{Cor}[Th]{Corollary}
\newtheorem{Prop}[Th]{Proposition}

 \theoremstyle{definition}
\newtheorem{Def}[Th]{Definition}
\newtheorem{Conj}[Th]{Conjecture}
\newtheorem{Rem}[Th]{Remark}
\newtheorem{?}[Th]{Problem}

\renewcommand{\P}{\mathbb{P}}

\newcommand{\cI}{\mathcal I}
\newcommand{\cB}{\mathcal B}
\newcommand{\PP}{\mathbb P}

\newcommand{\eff}{\mathrm{eff}}
\newcommand{\ia}{\mathrm{ia}}
\newcommand{\ea}{\mathrm{ea}}

\newcommand{\He}{\operatorname{He}}
\newcommand{\ee}{\mathrm e}
\newcommand{\cF}{\mathcal F}
\newcommand{\cT}{\mathcal T}

\begin{document}

\title[An inequality for the number of independent sets of matroids]{An inequality for the number of independent sets of matroids with an application to the forest-tree ratio of graphs}

\author[F. Bencs]{Ferenc Bencs}

\address{Centrum Wiskunde \& Informatica, Amsterdam, The Netherlands. P.O. Box 94248 1090 GE
Amsterdam The Netherlands}
\email{ferenc.bencs@gmail.com}

\author[P. Csikv\'ari]{P\'{e}ter Csikv\'{a}ri}

\address{Alfr\'ed R\'enyi Institute of Mathematics, H-1053 Budapest, Re\'altanoda utca 13-15 \and E\"otv\"os Lor\'and University, H-1117 Budapest, P\'azm\'any P\'eter s\'et\'any 1/C }
\email{peter.csikvari@gmail.com}

\thanks{The first author is supported by the Netherlands Organisation of Scientific Research (NWO): VI.Veni.222.303. The second author is supported by the Hungarian National Research, Development and Innovation
Office, Advanced grant 153378, and by Dynasnet European Research Council Synergy project -- grant number ERC-2018-SYG 810115.
}

 \subjclass[2020]{Primary: 05C30. Secondary: 05C31, 05C70}

 \keywords{spanning forests, spanning trees, resistance, matroid independence vector} 

\begin{abstract}
Let $M=(E,\mathcal{I})$ be a matroid of rank $r$. Let $\mathcal{I}_k$ be the independent sets of size $k$, and let $I_k=|\mathcal{I}_k|$ and $I=|\mathcal{I}|$.
We show that if every set $F\in \mathcal{I}_{r-1}$ is contained in at least $\delta$ bases, then
$$\ln \left(\frac{I}{I_r}\right)\geqslant \frac{I_{r-1}}{I_r}\cdot \delta \ln \left(1+\frac{1}{\delta}\right).$$
In particular, we have 
$$\frac{I}{I_r}\geqslant 2^{I_{r-1}/I_r}.$$
By combining this result with several other ideas, we prove that if $G$ is a simple connected graph on $n$ vertices, and $F(G)$ and $T(G)$ denote its numbers of spanning forests and spanning trees, respectively, then
$$\frac{F(G)}{T(G)}\geqslant \frac{F(K_n)}{T(K_n)},$$
where $K_n$ is the complete graph on $n$ vertices. Equality holds if and only if $G=K_n$.

\end{abstract}

\maketitle

\section{Introduction}

For a connected graph $G=(V,E)$ let $F(G)$ denote the number of spanning forests of $G$, that is, edge subsets $A\subseteq E$ without a cycle, and let $T(G)$ denote the number of spanning trees of $G$.

Cayley's formula gives that for the complete graph $K_n$ we have $T(K_n)=n^{n-2}$. Since one spanning tree contains $2^{n-1}$ spanning forests as subgraphs, one might expect that $F(K_n)$ is exponentially larger than $T(K_n)$. However, this is not the case: the same spanning forest is contained in many spanning trees. R\'enyi \cite{renyi1959some} proved that $F(K_n)\sim\sqrt{e}n^{n-2}$, where $e$ is the base of the natural logarithm.  Note that $\sqrt{e}\approx 1.648$.

The following conjecture was formulated by Mohan Ravichandran and independently by Guillem Perarnau and  Guillaume Chapuy (personal communication).

Let
\begin{equation} \label{eq:Q-def}
Q(G):=\frac{F(G)}{T(G)}.
\end{equation}

\begin{Conj} \label{forest-tree-ratio-conj}
Let $G$ be a simple connected graph on $n$ vertices. Then
$$Q(G)\geqslant Q(K_n).$$
\end{Conj}

The main result of this paper is a proof of this conjecture.

\begin{Th} \label{forest-tree-ratio-thm}
Let $G$ be a simple connected graph on $n$ vertices. Then
$$Q(G)\geqslant Q(K_n).$$
Equality holds if and only if $G=K_n$.
\end{Th}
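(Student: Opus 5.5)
We combine the matroid inequality from the abstract, applied to the graphic matroid of $G$, with an effective-resistance estimate, and then treat the remaining dense graphs separately. In the graphic matroid the rank is $r=n-1$, $I=F(G)$ and $I_r=T(G)$. The sets in $\mathcal I_{r-1}$ are the spanning forests with exactly two components. For such a forest with components $A$ and $V\setminus A$, the number of bases containing it is the number of edges of the cut $E(A,V\setminus A)$. We have the standard identity
$$\frac{I_{r-1}}{I_r}=\frac{1}{T(G)}\sum_{T}\#\{\text{forests }T-e\}\cdot\frac{1}{\#\{\text{completions}\}},$$
which is most conveniently written via effective resistances: $I_{r-1}/T(G)=\sum_{u<v,\,uv\notin E}R_{\mathrm{eff}}(u,v)+\dots$. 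More precisely, $F_2(G)/T(G)=\sum_{\{u,v\}} R_{\mathrm{eff}}(u,v)\cdot(\text{correction})$, and in fact the clean identity is $\frac{F_2(G)}{T(G)}=\frac1n\sum_{\{u,v\}} R(u,v)$ summed with the appropriate root weights, obtained from the matrix-tree theorem.

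The plan has four steps.

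First, I establish $\ln Q(G)\ge \frac{I_{n-2}}{T(G)}\,\delta\ln(1+1/\delta)$ with $\delta$ the minimum cut size. Since $x\ln(1+1/x)$ is increasing, lowering $\delta$ only weakens the bound, so it suffices to use $\delta\ge$ the edge-connectivity.

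Second, I express $I_{n-2}/T(G)$ as the sum of effective resistances. Deleting an edge $e$ from a spanning tree, and counting the pairs (tree, edge) in which $T-e$ is a fixed two-component forest, gives
$$\sum_{F\in\mathcal I_{n-2}} |\text{cut}(F)| = (n-1)T(G),$$
which is too crude on its own. The right quantity is $\sum_F 1$, and by Kirchhoff $\#\{F: u,v \text{ separated}\}=R(u,v)T(G)$. Summing over pairs weights each forest by $|A|(n-|A|)$. I will therefore aim for a lower bound on $I_{n-2}/T(G)$ in terms of $\sum_{u<v}R(u,v)$, the Kirchhoff index, and use Foster-type and Cauchy–Schwarz bounds. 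The pair sum $\sum_{uv\in E}R(u,v)=n-1$, and non-edges have resistance at least $\frac1{d_u}+\frac1{d_v}$-ish, so sparse graphs get a large Kirchhoff index and hence a large $Q$.

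Third, I compare with $K_n$. Here $\ln Q(K_n)\to 1/2$, with explicit $Q(K_n)$ values from Rényi's formula. For graphs with many missing edges or a small cut, the bound from the first step should exceed $\ln Q(K_n)$ directly.

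Fourth, the main obstacle is near-complete graphs, e.g.\ $K_n$ minus a few edges, where the matroid bound is not sharp enough because it is also not tight for $K_n$ itself. For these I would argue directly that deleting an edge increases $Q$. The quantity $Q(G-e)/Q(G)=\frac{F(G-e)}{F(G)}\cdot\frac{T(G)}{T(G-e)}$, and we have $T(G-e)/T(G)=1-R(e)$ and $F(G-e)/F(G)=1-P_F(e)$, where $P_F(e)$ is the probability that a uniform forest contains $e$. So the task is to show $P_F(e)\le R(e)$, i.e.\ that an edge is less likely to appear in a uniform spanning forest than in a uniform spanning tree. This is a negative-correlation and monotonicity statement, since trees are forests conditioned on being maximal, and I expect it to follow from Rayleigh monotonicity and the negative correlation of the matroid's basis/independent-set measures for graphic matroids. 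If the statement holds in general, strict monotonicity under edge deletion gives the whole theorem together with the equality case at once. I would try to prove it first, keeping the matroid inequality as the fallback for sparse graphs and checking small $n$ by hand.
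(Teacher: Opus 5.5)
Your Step 1 is exactly Theorem~\ref{ratio-minimum} for the graphic matroid. Your overall split (matroid bound for sparser graphs, a separate argument near $K_n$) also matches the paper's architecture. The main gap is in Step 4: the statement it relies on does not hold in general.

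You reduce the dense case to $P_F(e)\leqslant R_{\eff}(e^-,e^+)$ for every edge, i.e.\ $Q(G-e)\geqslant Q(G)$. Take $B_m=K_{2,m}+uv$, where $u,v$ are the two vertices of degree $m$ in $K_{2,m}$. Then $T(B_m)=(m+2)2^{m-1}$, of which $2^m$ trees contain $uv$. Also $F(B_m)=(m+6)3^{m-1}$, of which $3^m$ forests contain $uv$. Hence
$$P_F(uv)=\frac{3}{m+6}>\frac{2}{m+2}=R_{\eff}(u,v)\qquad (m\geqslant 7),$$
that is, $Q(K_{2,m})<Q(B_m)$; this is the paper's non-monotonicity example. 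The inequality holds only on average, since $\sum_e P_F(e)=\mathbb E|\mathbf F|<n-1=\sum_e R_{\eff}(e^-,e^+)$. Because it fails edge by edge, no combination of Rayleigh monotonicity and correlation inequalities can prove it in that form. (Negative correlation for the uniform forest measure is itself only conjectural.) Whether it holds for very dense graphs is open. So your plan leaves the regime $h=\binom n2-m\leqslant 0.49n$ unproved.

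The paper handles this regime by inclusion--exclusion over subsets $S$ of the missing-edge set:
$$F(G)-Q_nT(G)=\sum_{s\geqslant 1}(-1)^{s+1}D_s,\qquad D_s=\sum_{|S|=s}T_S\,(Q_n-Q_S),$$
where $T_S,F_S$ are counted in the contracted complete graph $K(\boldsymbol a(S))$. The ingredients are:
the formula $\phi(\boldsymbol a)=1+\sum_j\He_{j-2}(n)e_j(\boldsymbol a)$; Schur-convexity of $Q(\boldsymbol a)$; and the merge estimates $Q_n-Q_S\geqslant\delta_n>0$ and $d(S+e)\leqslant 3\,d(S)$ (improved to $\leqslant 2.04\,d(S)$ when $|S|=1$). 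Together these give $D_1>D_2\geqslant D_3\geqslant\cdots$ for $1\leqslant h\leqslant 0.49n$, which yields both the inequality and the equality case.

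Step 2 is a second gap. The matrix-tree theorem naturally gives rooted counts: $T(G)\sum_{u<v}R_{\eff}(u,v)$ counts two-component forests $\{A,V\setminus A\}$ with weight $|A|(n-|A|)$. That weight can be as large as $n^2/4$, so the Kirchhoff index only gives $F_2(G)/T(G)\geqslant\frac{4}{n^2}\sum_{u<v}R_{\eff}(u,v)$, roughly $4/n$ for $K_n$. There is no ``clean identity'' of the kind you describe. Since $\delta\ln(1+1/\delta)<1$, you need $F_2(G)/T(G)$ to exceed $\ln Q(K_n)\approx\frac12+\frac{5}{2n}$. The paper gets the sharp bound $F_2(G)/T(G)\geqslant\left(1+\frac6n\right)\frac{n-1}{2n}$ from the Richman--Shokrieh--Wu formula, together with Foster's theorem and Cauchy--Schwarz.

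Also, a small minimum cut makes the matroid bound weaker, not stronger: $\delta\ln(1+1/\delta)=\ln 2$ at $\delta=1$. The paper compensates using the large resistances of the cut edges, or of the edges at a vertex of small degree, inside the RSW formula. To reach $h\approx 0.49n$ it further needs the averaged form of Theorem~\ref{ratio-minimum}, controlled through Laplacian eigenvalues.
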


The proof of Theorem~\ref{forest-tree-ratio-thm} is decomposed into two theorems: one for graphs with at most $\binom{n}{2}-0.49n$ edges, and one for graphs with more than $\binom{n}{2}-0.49n$ edges. More precisely, one argument works for graphs with less than $\binom{n}{2}-\left(\frac{1}{4}+\varepsilon\right)n$ edges, and another argument works for graphs with more than $\binom{n}{2}-\left(\frac{1}{2}-\varepsilon\right)n$ edges. While there is plenty of room between $\frac{1}{4}$ and $\frac{1}{2}$ we will need to take care of the small $n$ cases. This is where the choice $0.49n$ will be convenient. So we will prove the following two theorems along with some more quantitative versions of them to make sure that we treat all positive integers $n$.

\begin{Th} \label{forest-tree-ratio-2}
For every $\varepsilon>0$ there exists an $n_0=n_0(\varepsilon)$ such that if $G$ has $n\geqslant n_0$ vertices and less than $\binom{n}{2}-\left(\frac{1}{4}+\varepsilon\right)n$ edges, then
$$Q(G)> Q(K_n).$$
\end{Th}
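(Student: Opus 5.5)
We will apply the matroid inequality from the abstract to the graphic matroid of $G$, which has rank $r=n-1$. In this setting $I=F(G)$, $I_r=T(G)$, and $I_{r-1}$ is the number of spanning forests with exactly two components. Such a forest $F$ with components of sizes $a$ and $n-a$ extends to a spanning tree by adding any edge of $G$ between the two components. So $F$ lies in $e_G(A,V\setminus A)$ bases, where $A$ is the vertex set of one component. The inequality then gives
$$\ln Q(G)\geqslant \frac{I_{r-1}}{T(G)}\,\delta\ln\!\left(1+\frac1\delta\right).$$
Since $x\ln(1+1/x)$ is increasing, with value $\ln 2$ at $x=1$ and limit $1$ as $x\to\infty$, the crude consequence is $\ln Q(G)\geqslant \ln 2\cdot I_{r-1}/T(G)$.

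The second step is to express $I_{r-1}/T(G)$ via effective resistances. A two-component forest is obtained from a spanning tree by deleting one edge, and each such forest arises from exactly $e_G(A,V\setminus A)$ trees. Hence
$$\frac{I_{r-1}}{T(G)}=\sum_{T}\frac{1}{T(G)}\sum_{e\in T}\frac{1}{e_G(A_{T,e},\overline{A_{T,e}})}.$$
A cleaner route, closer to the matroid proof, is to view $I_{r-1}/I_r$ as the expected value of $1/(\text{cut size})$. We would use $I_{r-1}/T(G)\geqslant \sum_{e}\ldots$ and then Foster-type identities. The key facts are that $\sum_{e\in E}R_{\mathrm{eff}}(e)=n-1$, and that $\sum_{u<v}R_{\mathrm{eff}}(u,v)$ is large when $G$ is sparse. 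Concretely, for a nonedge $uv$, the number of two-component forests separating $u$ and $v$ equals $T(G+uv)-T(G)=R_{\mathrm{eff}}(u,v)\,T(G)$.

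So I would lower bound $I_{r-1}$ by counting, for each tree $T$ and each edge $e\in T$, the resulting forest $T-e$. Weighting each forest by the number of nonedges crossing it shows that the expected number of crossing nonedges is $\sum_{uv\notin E}R_{\mathrm{eff}}(u,v)\geqslant \sum_{uv\notin E}\frac{2}{n}$, using $R_{\mathrm{eff}}(u,v)\geqslant 1/d_u+1/d_v\geqslant 2/(n-1)$ roughly. The crossing edges, by contrast, contribute $\sum_{e\in E}R_{\mathrm{eff}}(e)=n-1$. We then compare with the $K_n$ target $\ln Q(K_n)\to\frac12$. That value comes from the forest being nearly a tree plus an isolated vertex, with cut size $\approx n-1$, and $n$ choices giving $I_{r-1}/T\approx n\cdot n^{n-3}/n^{n-2}\cdot$(adjustments), i.e. $\approx\frac12$ at the scale of $\delta\ln(1+1/\delta)\approx 1-\frac1{2\delta}$. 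Each nonedge missing from $K_n$ increases the count of two-component forests relative to trees by roughly $R_{\mathrm{eff}}(u,v)\approx 2/n$. With $m$ missing edges, $I_{r-1}/T$ grows from its $K_n$ value by about $2m/n$, while $\delta$ can decrease. When $m>(\frac14+\varepsilon)n$, the gain $2m/n>\frac12+2\varepsilon$ should dominate the loss from replacing $\delta\ln(1+1/\delta)\approx1$ by the crude factor $\ln 2$.

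The main obstacle is making this last comparison rigorous. The bound $\ln 2\cdot I_{r-1}/T$ is weak for cuts of size $1$, while $K_n$ needs the near-$1$ factor. I would split the two-component forests according to their cut size $c$ and apply a refined version of the matroid argument with the local factor $c\ln(1+1/c)$ rather than the global $\delta$. The first thing to try is to check that the paper's proof of the matroid inequality is local in this sense. Forests whose cuts are large then contribute almost fully, and small cuts only occur when some vertex has small degree. The dense regime ($m\le 0.49n$) is handled separately, and the sparse regime is easy because $Q(G)$ becomes large. I would also need an error analysis uniform in $n\geqslant n_0(\varepsilon)$.
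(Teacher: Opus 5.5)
Your first step, applying Theorem~\ref{ratio-minimum} to the graphic matroid (ideally in its averaged form with the factor $\delta(S)\ln(1+1/\delta(S))$ for each two-component forest $S$), is how the paper starts. After that there is a genuine gap, and the heuristic guiding your plan is off by a factor of $n$.

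Missing edges do not raise $F_2(G)/T(G)$ by about $R_{\mathrm{eff}}(u,v)\approx 2/n$ each. Already $F_2(K_n-e)/T(K_n-e)=\frac{F_2(K_n)}{T(K_n)}\bigl(1+\frac{2}{(n-1)(n-2)}\bigr)$, so a missing edge gains only about $1/n^2$. With $h=cn$ missing edges one still has $F_2/T=\frac12+O(1/n)$.

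The quantity $\sum_{uv\notin E}R_{\mathrm{eff}}(u,v)$ you compute equals $\frac{1}{T(G)}\sum_S\bigl(|S_+||S_-|-\delta(S)\bigr)$, i.e.\ it counts non-edges crossing two-component forests. It measures how much the cut sizes $\delta(S)$ have shrunk compared with $K_n$, so it is a loss term, not a gain. The paper bounds it from above by $\frac{2h}{n-h-1}$ using the Laplacian spectrum of $\overline{G}$.

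Consequently the crude factor $\ln 2$ cannot work: it gives about $0.35<\ln Q_n\approx\frac12$. The whole comparison takes place at scale $1/n$:
\begin{itemize}
\item $\ln Q_n=\frac12+\frac{5}{2n}+O(n^{-2})$;
\item the factor $1-\frac{1}{2\delta(S)}$ costs about $\frac{1}{4n}$;
\item the missing edges gain about $\frac{c}{n}$.
\end{itemize}
That balance, not your comparison of $2m/n$ with $\frac12$, is the source of the threshold $\frac14$.

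What your plan lacks is a lower bound on $F_2(G)/T(G)$ that is sharp for $K_n$ up to order $1/n$. The identities $\sum_{e\in E}R_{\mathrm{eff}}(e^-,e^+)=n-1$ and the separating-forest count do not provide one. The paper's key input is the Richman--Shokrieh--Wu formula $F_2/T=\mathrm{RSW}_1+\mathrm{RSW}_2$. With Cauchy--Schwarz and $\mathrm{RSW}_2\geqslant\frac{6}{n}\mathrm{RSW}_1$ it gives $F_2/T\geqslant(1+\frac{6}{n})\frac{(n-1)^2}{4e(G)}=\frac12+\frac{5}{2n}+\frac{c}{n}+O(n^{-2})$ when $e(G)=\binom n2-cn$. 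Combined with the averaged matroid bound, $|S_+||S_-|\geqslant n-1$, and the eigenvalue estimate above, this is how the paper handles the range $\frac14<c\leqslant\frac12$ for large $n$.

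The same formula is also what compensates for small cuts, which your plan only gestures at. When $\delta(S)$ is small, $\delta(S)\ln(1+1/\delta(S))$ is bounded away from $1$, and one must show that $F_2/T$ is correspondingly larger. The paper does this in three ways:
\begin{itemize}
\item via $Q\geqslant 1+F_2/T$ when $e(G)\leqslant\frac{(n-1)^2}{3}$;
\item via $R_C\geqslant 1$ on a minimum cut $C$ (splitting $\mathrm{RSW}_1$ by Cauchy--Schwarz) when $\delta<\delta_v$;
\item via $\mathrm{RSW}_2\geqslant\frac{3}{4d}\bigl(1+\frac{d-1}{n}\bigr)^2$ at a minimum-degree vertex when $\delta=\delta_v$.
\end{itemize}
Small cuts need not come from low-degree vertices (e.g.\ two cliques joined by a single edge), so the case $\delta<\delta_v$ cannot be skipped.
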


\begin{Th} \label{forest-tree-ratio-3}
For every $\varepsilon>0$ there exists an $n_0=n_0(\varepsilon)$ such that if $G$ has $n\geqslant n_0$ vertices and more than $\binom{n}{2}-\left(\frac{1}{2}-\varepsilon\right)n$ edges, then
$$Q(G)\geqslant Q(K_n)$$
with equality if and only if $G=K_n$.
\end{Th}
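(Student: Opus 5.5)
We treat $G$ as $K_n$ minus a small set $H$ of $m<(\frac12-\varepsilon)n$ edges, and compare $Q(G)$ with $Q(K_n)$ perturbatively. The probabilistic reading of the ratio is the natural tool. Let $\cF$ be a uniformly random spanning forest of $K_n$ and $\cT$ a uniformly random spanning tree of $K_n$. Then
$$\frac{F(G)}{F(K_n)}=\P(\cF\cap H=\emptyset),\qquad \frac{T(G)}{T(K_n)}=\P(\cT\cap H=\emptyset).$$
So $Q(G)\geqslant Q(K_n)$ is equivalent to
$$\P(\cF\cap H=\emptyset)\geqslant \P(\cT\cap H=\emptyset).$$
In words: a random forest avoids $H$ at least as often as a random tree. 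This is plausible for a simple reason. An edge lies in $\cT$ with probability exactly $2/n$, but it lies in $\cF$ with probability about $(2/n)(1-c/n)$. So each edge is slightly less likely to be used by a forest, and we must show this gain beats all correlation effects when $|H|$ is linear.

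\textbf{Step 1: edge marginals.} Using the known asymptotics for forests of $K_n$ (R\'enyi's formula and its refinements), I would compute $\P(e\in\cF)=\frac{2}{n}\bigl(1-\frac{a}{n}+O(n^{-2})\bigr)$ with an explicit constant $a>0$. Via the forest-generating function this is the average number of edges of a random forest, which equals $n-1-\tfrac12+o(1)$ by R\'enyi's $\sqrt{e}$ computation. This gives $a=\tfrac12$ per $2/n$-scaled edge. It is also where the threshold $\tfrac12$ in the statement should come from.

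\textbf{Step 2: avoidance probabilities.} For trees, use the transfer-current / determinantal structure: $\P(\cT\cap H=\emptyset)$ equals $\det(I-K_H)$, where $K_H$ is the transfer-current matrix restricted to $H$. In $K_n$ we have $K_{ee}=2/n$ and $K_{ef}=\pm1/n$ for adjacent $e,f$, and $0$ otherwise. Expanding $\ln\det$ gives
$$-\frac{2m}{n}-\frac{1}{2}\sum_{e,f}K_{ef}^2-\dots$$
The second-order term is $O\bigl((m+\sum_v d_H(v)^2)/n^2\bigr)$. For forests there is no determinantal structure, so I would instead use negative correlation and Rayleigh-type inequalities for the forest measure. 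Alternatively, write $F(G)$ via the expansion $F(K_n-H)=\sum_{S\subseteq H}(-1)^{|S|}F_S$, where $F_S$ counts forests of $K_n$ containing $S$. These are explicitly computable from the component sizes of $S$ through the formula for forests containing a given forest. The tree analogue $T_S=n^{n-2-|S|}\prod(\text{component sizes})$ is also explicit. The comparison then reduces to comparing two inclusion-exclusion sums termwise. Each forest term carries the extra factor $(1-\frac{|S|}{2n}+\dots)$ relative to the tree term, up to lower-order effects.

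\textbf{Step 3: conclude.} Combining the two expansions gives
$$\ln\frac{Q(G)}{Q(K_n)}=\frac{2m}{n}\cdot\frac{1}{2n}\cdot(1+o(1))\;-\;(\text{correlation terms}).$$
The task is to show the correlation terms are smaller when $m<(\frac12-\varepsilon)n$. The dominant correlations come from pairs of adjacent edges of $H$, weighted like $\sum_v\binom{d_H(v)}{2}/n^2$. These appear in both the tree and forest expansions with nearly the same coefficients, so they largely cancel in the difference. What remains is of order $m^2/n^3$ relative to the main gain of order $m/n^2$. This is exactly where the linear budget $m<(\frac12-\varepsilon)n$ enters: at $m\approx n/2$ the quadratic loss catches up with the linear gain. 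Equality holds only when $m=0$, since for $m\geqslant1$ the leading term is strictly positive.

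\textbf{Main obstacle.} The hard part is making Step 2 rigorous and uniform for the forest side. There we lack a determinantal formula, and the inclusion-exclusion over $S\subseteq H$ must be controlled when $H$ has high-degree vertices, for instance a star with $n/2$ edges. I would first try to bound the alternating sums by grouping according to the component structure of $S$. I would then use the exact product formula for forests containing a fixed forest to compare the forest and tree sums term by term, reducing the claim to a one-variable inequality in $m/n$.
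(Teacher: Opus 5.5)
\textbf{There is a genuine gap: Step~3 is asserted, not proved.} Your reformulation $\P(\cF\cap H=\emptyset)\geqslant\P(\cT\cap H=\emptyset)$ is correct. So is the inclusion--exclusion $F(K_n-H)=\sum_{S\subseteq H}(-1)^{|S|}F_S$ with $T_S=n^{n-2-|S|}\prod a_i$, which is also the paper's starting point. But the proposal stops exactly where the proof has to begin.

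\begin{itemize}
\item \emph{The expansion is not perturbative.} The gain you isolate is of order $m/n^2=O(1/n)$, while the quantities being compared are of order $1$. For a star $H$ with about $n/2$ edges, $K_H=\frac1n(I+J)$ and $\det(I-K_H)=(1-\frac{m+1}{n})(1-\frac1n)^{m-1}$. So even $\frac12\sum_{e,f}K_{ef}^2\approx m^2/(2n^2)$ is $\Theta(1)$. The claim that tree and forest correlation terms ``largely cancel'' up to $o(m/n^2)$, uniformly over all shapes of $H$, is therefore the whole theorem, and nothing in the proposal establishes it.
\item \emph{The forest-side tools go the wrong way.} Negative correlation for the uniform forest measure is only conjectural. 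Even granted, it gives $\P(\cF\cap H=\emptyset)\leqslant\prod_{e\in H}\P(e\notin\cF)$, an upper bound, whereas you need a lower bound.
\item \emph{There is no product formula for $F_S$.} The correct statement is $F_S=\phi(\boldsymbol a(S))$ with $\phi(\boldsymbol a)=1+\sum_{j\geqslant 2}\He_{j-2}(n)e_j(\boldsymbol a)$, and deriving this takes real work (the paper uses Lagrange inversion).
\item \emph{Your account of the threshold is wrong.} Theorem~\ref{forest-tree-ratio-thm} holds for every $G$, so no quadratic loss ever catches up at $m\approx n/2$. The constant $\frac12$ is an artifact of the method, and it is unrelated to the $a=\frac12$ of your Step~1.
\item \emph{The equality case is not settled.} A positive leading term alone does not give strict inequality without error control.
\end{itemize}

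\textbf{The missing idea is to control the alternating inclusion--exclusion sum exactly rather than expand it.} Put $d(S)=Q_n-Q(\boldsymbol a(S))$, so that $F_S-Q_nT_S=-T_S\,d(S)$. Then $F(G)-Q_nT(G)=\sum_{s\geqslant 1}(-1)^{s+1}D_s$ with $D_s=\sum_{|S|=s}T_S\,d(S)$, and it suffices to show $D_1>D_2$, $D_3\geqslant D_4$, and so on. The paper does this with three ingredients.
\begin{itemize}
\item[(i)] $d(S)\geqslant\delta_n:=Q_n-Q(2,1^{n-2})>0$ for every nonempty forest $S$. This follows from the Hermite formula (all $\He_r(n)>0$), Schur-convexity of $Q(\boldsymbol a)$, and strict monotonicity of $Q$ along $(t+1,1^{n-t-1})$.
\item[(ii)] $d(S+e)\leqslant\gamma\,d(S)$. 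Here $\gamma=3$ in general, because a single merge lowers $Q$ by at most $2\delta_n$ when $|S|+1\leqslant n/2$. For $|S|=1$ one can take $\gamma=2+o(1)$, because $Q_n-Q(2,2,1^{n-4})\leqslant(2+o(1))\delta_n$.
\item[(iii)] $T_{S+e}\leqslant\frac2n T_S$, which with double counting gives $D_{s+1}/D_s\leqslant\frac{2\gamma(h-s)}{n(s+1)}$.
\end{itemize}
At $s=1$ the bound in (iii) is $\gamma(h-1)/n$. This is below $1$ for $h<(\frac12-\varepsilon)n$ precisely because $\gamma$ can be taken arbitrarily close to $2$ for large $n$; that is where the $\frac12$ really comes from. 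The strict inequality $D_1>D_2$ for $h\geqslant1$ also settles the equality case. Because these bounds are uniform in the component structure of $S$, stars and matchings are handled alike, which is exactly what your perturbative scheme could not do.
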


The splitting of Theorem~\ref{forest-tree-ratio-thm} is not just aesthetic: the arguments proving Theorems~\ref{forest-tree-ratio-2} and \ref{forest-tree-ratio-3} are completely different. Interestingly, both arguments fail spectacularly in the regime of the other theorem. The proof of Theorem~\ref{forest-tree-ratio-2} cannot work for graphs with more than $\binom{n}{2}-cn$ for some small $c$ (though $\frac{1}{4}$ could be improved with some extra work). This argument also gives the following simpler theorem.

\begin{Th} \label{forest-tree-ratio}
Let $G$ be a simple connected graph on at least $2$ vertices. Then
$Q(G)> \sqrt{e}$.
\end{Th}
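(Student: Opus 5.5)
The plan is to deduce Theorem~\ref{forest-tree-ratio} from the matroid inequality stated in the abstract, applied to the graphic matroid of $G$. Here $E=E(G)$, $r=n-1$, $I=F(G)$ and $I_r=T(G)$, and $\mathcal I_{r-1}$ is the set of spanning forests with exactly two components. For such a forest $F$ with components on vertex sets $A$ and $B$, the bases containing $F$ are exactly $F+e$ with $e\in E(A,B)$. So the number of bases containing $F$ is $d_F:=e_G(A,B)\geqslant 1$, since $G$ is connected. The target is $\ln Q(G)>\tfrac12=\ln\sqrt e$.

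\textbf{Step 1: a local version of the inequality.} Instead of the global $\delta$, I would check that the same argument gives
$$\ln Q(G)\geqslant \frac{1}{T(G)}\sum_{F\in\mathcal I_{r-1}} d_F\ln\Bigl(1+\frac{1}{d_F}\bigr)\Bigr.$$
The global statement should be the special case in which monotonicity of $x\ln(1+1/x)$ is used to replace each $d_F$ by $\delta$. If the argument does not localize, I fall back on the global form with $\delta=\min_F d_F$. For $K_n$ this minimum is $n-1$, which is large, so the global form already loses little in the dense case.

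\textbf{Step 2: elementary estimates.} Double counting pairs (tree, edge of the tree) gives
$$\sum_F d_F=(n-1)\,T(G).$$
I would then use the strict inequality $\ln(1+1/x)>\frac{2}{2x+1}$ for $x>0$, which gives $d\ln(1+1/d)>\frac{2d}{2d+1}$. The bound of Step~1 therefore becomes
$$\ln Q(G)>\frac{1}{T(G)}\sum_F \frac{2d_F}{2d_F+1}.$$
The goal is to show that the right-hand side is at least $\tfrac12$.

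\textbf{Step 3 (main obstacle): lower-bounding the sum.} The terms with $d_F=1$ contribute $\tfrac23$ each, and larger $d_F$ contribute more, so it is enough to show roughly $|\mathcal I_{r-1}|\geqslant \tfrac34 T(G)$. Written per tree, this reads
$$\sum_T\sum_{e\in T}\frac{1}{d_{T,e}}\geqslant \frac34\,T(G),$$
which has to be compared with $\sum_F d_F=(n-1)T(G)$. Jensen alone is too weak here, because $d_F$ can be as large as $n^2/4$. I would try two routes.

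The first route expresses two-component forests through effective resistances. The number of two-component forests separating $u$ and $v$ equals $T(G)\,R(u,v)$, so
$$\sum_F |A_F||B_F| = T(G)\sum_{u<v}R(u,v).$$
I would then combine this with $d_F\leqslant |A_F||B_F|$, with $\sum_{u<v} R(u,v)\geqslant \sum_{u<v}\frac{2}{\deg u+\deg v}$-type bounds, and with Foster's theorem $\sum_{e}R_e=n-1$. The aim of these inequalities is a lower bound on $\sum_F 1$ via Cauchy--Schwarz in the form
$$\sum_F 1\geqslant \frac{\bigl(\sum_F d_F\bigr)^2}{\sum_F d_F^2}.$$

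The second route, to try if the first stalls, is to treat the extremes separately. In sparse-ish graphs, many forests have small $d_F$, and the factor $\tfrac{2d}{2d+1}$ times the count is large. In dense graphs, every $d_F$ is large, $d\ln(1+1/d)$ is close to $1$, and it suffices to show $|\mathcal I_{r-1}|/T(G)$ is bounded away below from $\tfrac12$, i.e.\ exceeds $\tfrac12+\eta$. I expect this comparison of $|\mathcal I_{r-1}|$ with $T(G)$ to be the genuinely delicate step; the rest is bookkeeping.
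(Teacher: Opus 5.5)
Your reduction is the same as the paper's: apply Theorem~\ref{ratio-minimum} to the graphic matroid. The local form you want in Step~1 is in fact how that theorem is stated, and your Steps~1--2 are correct. But the proof stops exactly where the real work is. Nothing in the proposal yields a lower bound on $F_2(G)/T(G)$, and neither of your routes can.

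Your sufficient condition $F_2(G)\geqslant \frac34T(G)$ already fails for $K_n$ with $n\geqslant 9$, because $F_2(K_n)/T(K_n)=\frac{(n+6)(n-1)}{2n^2}\to\frac12$. In the dense regime you are within $O(1/n)$ of the threshold, so you need a bound of the precise form $\frac12+\frac cn$.

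Route~1 is hopeless at that precision. Grouping by trees, $\sum_F d_F^2=\sum_T\sum_{e\in T}d_{T,e}$. For $K_n$ this is $T(K_n)$ times the expected Wiener index of a random labelled tree, which is of order $n^{5/2}$. So your Cauchy--Schwarz step only gives $F_2/T\gtrsim n^{-1/2}$. The Kirchhoff identity combined with $|A_F||B_F|\leqslant n^2/4$ does no better: it gives $F_2/T\geqslant 4\sum_{u<v}R(u,v)/n^2=O(1/n)$ for $K_n$. Route~2 is only a restatement of the problem.

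The missing ingredient is the Richman--Shokrieh--Wu formula $F_2/T=\mathrm{RSW}_1+\mathrm{RSW}_2$. Foster's theorem plus Cauchy--Schwarz give $\mathrm{RSW}_1\geqslant\frac{(n-1)^2}{4m}$, and averaging over $q$ gives $\mathrm{RSW}_2\geqslant\frac6n\mathrm{RSW}_1$. Hence $F_2/T\geqslant(1+\frac6n)\frac{n-1}{2n}$.

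Even this is not enough when the minimum cut $\delta$ is small. Your local version does not fix this by itself, since you would still have to control $\frac1{T}\sum_F\frac1{2d_F+1}$. The paper therefore splits into cases:
\begin{itemize}
\item If $m\leqslant (n-1)^2/3$, then directly $F/T\geqslant 1+\mathrm{RSW}_1\geqslant\frac74$.
\item If $\delta>n/4$, then $(1+\frac6n)\frac{n-1}{2n}(1-\frac2n)>\frac12$ for $n\geqslant 6$.
\item Otherwise, a minimum cut $C$ satisfies $\sum_{e\in C}R_{\mathrm{eff}}(e^-,e^+)\geqslant 1$, since every spanning tree meets $C$. Applying Cauchy--Schwarz separately on $C$ and on $E\setminus C$ gives $\mathrm{RSW}_1\geqslant\frac14\bigl(\frac1\delta+\frac{(n-2)^2}{m-\delta}\bigr)$. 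The extra $\frac1{4\delta}$ exactly compensates the loss, through $(1+\frac1{2\delta})\,\delta\ln(1+\frac1\delta)>1$.
\end{itemize}
The cases $n\leqslant 5$ follow from $F/T\geqslant 1+F_2/T$.
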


The argument proving Theorem~\ref{forest-tree-ratio-3} fails for graphs with less than $\binom{n}{2}-\frac{n}{2}$ edges. This proof is based on the inclusion-exclusion formula with very subtle analysis. The core idea is a beautiful formula for the number of forests of certain contracted versions of the complete graph.

We also note that the proof of Theorem~\ref{forest-tree-ratio-2} was contributed entirely by the human authors of the article, while the proof of Theorem~\ref{forest-tree-ratio-3} was provided largely by ChatGPT 5.6 Sol. Here, human authors only reorganized the proof for better readability, clarified some details, and improved some constants. There is also a Jupyter notebook in the following github repository that aims to help the Reader with checking computational results:
\bigskip

\fbox{\begin{minipage}{0.9\textwidth}
\url{https://github.com/csikvari/forest-tree-ratio/blob/main/Forest_tree_ratio_reproducibility.ipynb}
\end{minipage}}
\bigskip

The proofs of Theorems~\ref{forest-tree-ratio-2} and \ref{forest-tree-ratio} rely on the following two theorems. One of them is a formula of Richman, Shokrieh and Wu \cite{richman2023counting}. For a brief introduction to the theory of resistances used in this paper, see the beginning of Section~\ref{forest-tree-ratio-2-sect}.

\begin{Th}[Richman, Shokrieh and Wu \cite{richman2023counting}] \label{thm: RSW-formula}
Let $G$ be a connected graph and let $F_2(G)$ denote the number of forests with two connected components. Suppose that we assign unit resistance to every edge of the graph, and let $R_{\eff}(u,v)$ be the effective resistance between vertices $u$ and $v$. Let us orient each edge arbitrarily and let $e^-$ be the tail and $e^+$ be the head of an edge. If $q$ is an arbitrary vertex, then
\begin{align} \label{RSW-formula}
\frac{F_2(G)}{T(G)}=\frac{1}{4}\sum_{e\in E}R_{\eff}(e^-,e^+)^2+\frac{3}{4}\sum_{e\in E}(R_{\eff}(q,e^-)-R_{\eff}(q,e^+))^2.
\end{align}
\end{Th}

We will refer to the identity \eqref{RSW-formula} as RSW-formula throughout the paper. It is a variant of the formula of Kassel and Wilson \cite{kassel2016looping} (see also \cite{kassel2015random}) that was obtained by building on the work of Myrvold \cite{myrvold1992counting}. As we will see, a consequence of the RSW-formula is that 
$$\frac{F_2(G)}{T(G)}\geqslant \frac{1}{2}$$
holds true for every simple connected graph $G$.
This statement is asymptotically tight for the complete graphs.
In fact, for the complete graphs $K_n$ R\'enyi \cite{renyi1959some} proved that for fixed $s$ we have
$$\frac{F_s(K_n)}{T(K_n)}\simeq \frac{1}{2^{s-1}(s-1)!},$$
where $F_s(G)$ denotes the number of forests with $s$ components. (So $F_1(G)=T(G)$.) While this immediately implies that
$$\frac{F(G)}{T(G)}\geqslant \frac{T(G)+F_2(G)}{T(G)}\geqslant \frac{3}{2},$$
in order to get the stronger, asymptotically tight bound $\sqrt{e}$ we need a theorem that connects $\frac{F(G)}{T(G)}$ with $\frac{F_2(G)}{T(G)}$. This is exactly the content of the advertised lemma, stated in the abstract, which is valid not only for graphs (graphic matroids), but for general matroids.

\begin{Th} \label{ratio-minimum}
Let $M=(E,\mathcal{I})$ be a matroid of rank $r\geqslant 1$. Let $\mathcal{I}_k$ be the independent sets of size $k$, and let $I_k=|\mathcal{I}_k|$ and $I=|\mathcal{I}|$. For a set $F\in \mathcal{I}_{r-1}$ let $\delta(F)$ be the number of bases containing $F$. Then
$$\ln \left(\frac{I}{I_r}\right)\geqslant \frac{I_{r-1}}{I_r}   
\cdot \left[\frac{1}{I_{r-1}}\sum_{F\in \mathcal{I}_{r-1}}\delta(F)\ln \left(1+\frac{1}{\delta(F)}\right)\right].$$
In particular, if every set $F\in \mathcal{I}_{r-1}$ is in at least $\delta$ bases, then
$$\ln \left(\frac{I}{I_r}\right)\geqslant \frac{I_{r-1}}{I_r}\cdot \delta \ln \left(1+\frac{1}{\delta}\right).$$
In particular, we have 
$$\frac{I}{I_r}\geqslant 2^{I_{r-1}/I_r}.$$
\end{Th}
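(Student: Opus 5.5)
The plan is to express $I/I_r$ as an average over a uniformly random basis, apply Jensen's inequality to the logarithm, and then bound the resulting quantity for each single basis by a product over its $(r-1)$-subsets.

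\textbf{Step 1 (averaging over bases).} For an independent set $A$ let $N(A)$ denote the number of bases containing $A$. Every $A \in \mathcal{I}$ lies in $N(A) \geqslant 1$ bases, so
$$I=\sum_{A\in\mathcal I}1=\sum_{B\in \mathcal I_r}\ \sum_{A\subseteq B}\frac{1}{N(A)}.$$
Hence $I/I_r=\mathbb{E}_B\bigl[\sum_{A\subseteq B}1/N(A)\bigr]$, where $B$ is a uniformly random basis. Since $\ln$ is concave, Jensen's inequality gives
$$\ln\frac{I}{I_r}\geqslant \mathbb{E}_B\Bigl[\ln \sum_{A\subseteq B}\frac1{N(A)}\Bigr].$$

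\textbf{Step 2 (key lemma).} I will show that for every basis $B$,
$$\sum_{A\subseteq B}\frac1{N(A)}\geqslant \prod_{e\in B}\Bigl(1+\frac{1}{N(B-e)}\Bigr)=\sum_{A\subseteq B}\ \prod_{e\in B\setminus A}\frac1{N(B-e)}.$$
Comparing the two sums term by term, it suffices to prove
$$N(A)\leqslant \prod_{e\in B\setminus A}N(B-e) \qquad\text{for every } A\subseteq B.$$
This is the main step, and I would prove it with an injection. Let $C$ be a basis with $C\supseteq A$. By Brualdi's bijective basis exchange lemma, there is a bijection $\sigma:B\setminus C\to C\setminus B$ such that $B-e+\sigma(e)$ is a basis for every $e\in B\setminus C$. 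Extend $\sigma$ by $\sigma(e)=e$ for $e\in (B\cap C)\setminus A$; in that case $B-e+\sigma(e)=B$ is trivially a basis. Then $\sigma$ is a bijection from $B\setminus A$ onto $C\setminus A$, and for each $e\in B\setminus A$ the set $B-e+\sigma(e)$ is a basis containing $B-e$. Map
$$C\ \longmapsto\ \bigl(B-e+\sigma(e)\bigr)_{e\in B\setminus A}.$$
This map is injective, because $C=A\cup\{\sigma(e):e\in B\setminus A\}$ can be recovered from the tuple. For each $e\in B\setminus A$ the coordinate ranges over a set of $N(B-e)$ bases, so the count of such $C$ is at most $\prod_{e\in B\setminus A}N(B-e)$, as claimed. (A fixed choice of $\sigma$ for each $C$ is enough; uniqueness of $\sigma$ is not needed.)

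\textbf{Step 3 (double counting).} Combining Steps 1 and 2,
$$\ln\frac{I}{I_r}\geqslant\frac1{I_r}\sum_{B}\sum_{e\in B}\ln\Bigl(1+\frac1{N(B-e)}\Bigr).$$
Each pair $(B,e)$ with $e\in B$ corresponds to the set $F=B-e\in\mathcal I_{r-1}$, and each such $F$ arises from exactly $\delta(F)$ pairs. So the right-hand side equals
$$\frac1{I_r}\sum_{F\in\mathcal I_{r-1}}\delta(F)\ln\Bigl(1+\frac1{\delta(F)}\Bigr),$$
which is the first inequality of the theorem.

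\textbf{Step 4 (the corollaries).} The function $x\mapsto x\ln(1+1/x)$ is increasing on $(0,\infty)$; this follows from $\ln(1+1/x)>1/(x+1)$. Therefore $\delta(F)\geqslant\delta$ gives the second bound. Since every $F \in \mathcal{I}_{r-1}$ extends to a basis, $\delta\geqslant1$, and the value $\ln 2$ at $x=1$ then gives $I/I_r\geqslant 2^{I_{r-1}/I_r}$.

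I expect the only real obstacle to be the injection in Step 2. It relies on the bijective exchange property, which is a standard matroid fact (Brualdi), so I would cite it rather than reprove it.
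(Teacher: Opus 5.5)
Your proposal is correct and is essentially the paper's second proof of Theorem~\ref{ratio-minimum}. Both arguments use the identity $I/I_r=\frac{1}{I_r}\sum_{B}\sum_{A\subseteq B}1/N(A)$ and the key bound $N(A)\leqslant\prod_{e\in B\setminus A}N(B-e)$ obtained from bijective basis exchange, followed by expansion into $\prod_{e\in B}(1+1/N(B-e))$, Jensen, and regrouping by $F=B-e$. The only cosmetic difference is that you apply Brualdi's exchange directly in $M$ (from $B\setminus C$ to $C\setminus B$, extended by the identity on $(B\cap C)\setminus A$), whereas the paper contracts $A$ and applies it to full bases of $M/A$; the paper's primary proof instead goes through the permutation Tutte polynomial and Harris's inequality.
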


For graphic matroids, the above theorem says that
$$\ln\left(\frac{F(G)}{T(G)}\right)\geqslant \frac{F_2(G)}{T(G)}\cdot \delta \ln \left(1+\frac{1}{\delta}\right),$$
where $\delta$ is the size of the smallest cut. Seemingly, this is not exactly what we want, because if $\delta$ is small, then $\delta \ln\left(1+\frac{1}{\delta}\right)$ is much smaller than $1$. Luckily, when $\delta$ is small, then the effective resistance between the endpoints of the edges in small cuts will give an extra contribution to $\frac{F_2(G)}{T(G)}$ in the RSW-formula. 
\bigskip

\noindent \textbf{Notation.} Throughout this paper $G$ will be a simple connected graph on $n$ vertices with $m$ edges and edge-connectivity $\delta$, which means that the size of the smallest cut is $\delta$. $T(G)$ will denote its number of spanning trees, and $F(G)$ denotes its number of spanning forests. $K_n$ is the complete graph on $n$ vertices. We use the notation $Q(G)=\frac{F(G)}{T(G)}$, $F_n=F(K_n)$, $T_n=T(K_n)$ and $Q_n=Q(K_n)$.
\bigskip

\noindent \textbf{This paper is organized as follows.} In Section
~\ref{permutation-Tutte-sect} we prove Theorem~\ref{ratio-minimum}. In Section~\ref{forest-tree-ratio-2-sect} we prove Theorems~\ref{forest-tree-ratio-2} and \ref{forest-tree-ratio}. In Section~\ref{incusion-exclusion-sect} we give the proofs of Theorems~\ref{forest-tree-ratio-3} and \ref{forest-tree-ratio-3-quantitative}. We end the paper with various conjectures and remarks on the methods used in the paper.

\section{Permutation Tutte polynomial and Theorem~\ref{ratio-minimum}} \label{permutation-Tutte-sect}

The proof of Theorem~\ref{ratio-minimum} heavily relies on the theory of the permutation Tutte polynomial. The idea is that the Tutte polynomial $T_G(x,y)$ can be written as a sum of the permutation Tutte polynomials $\widetilde{T}_{H_j}(x,y)$ for certain bipartite graphs $H_j$. As a consequence, certain inequalities valid for the permutation Tutte polynomial transfer to the Tutte polynomial.

\begin{Def}[Beke, Cs\'aji, Csikv\'ari, Pituk \cite{beke2024permutation}] \label{main-def}
Let $H=(A,B,E)$ be a bipartite graph. Suppose that $V(H)=[m]$. For a permutation $\pi:[m]\to [m]$, we say that a vertex $i\in A$ is internally active if
$$\pi(i)>\max_{j\in N_H(i)}\pi(j),$$
where the maximum over an empty set is set to be $-\infty$.
Similarly, we say that vertex $j\in B$ is externally active if
$$\pi(j)>\max_{i\in N_H(j)}\pi(i).$$
Let $\ia(\pi)$ and $\ea(\pi)$ be the number of internally and externally active vertices in $A$ and $B$, respectively.
Let
$$\widetilde{T}_H(x,y)=\frac{1}{m!}\sum_{\pi \in S_m}x^{\ia(\pi)}y^{\ea(\pi)},$$
where $S_m$ denotes the set of all permutations on $m$ elements.
We will call $\widetilde{T}_H(x,y)$ the permutation Tutte polynomial of $H$.
\end{Def}

The above definition is motivated by the following theorem of Tutte.

\begin{Th}[Tutte \cite{tutte1954contribution}] \label{ia-ea-characterization}
Let $G$ be a connected graph with $m$ edges. Label the edges with $1,2,\dots,m$ arbitrarily. In the case of a spanning tree $T$ of $G$, let us call an edge $e\in E(T)$ internally active if $e$ has the largest label among the edges $f'\in E(G)$ joining the connected components of the graph $T-e$. Let us call an edge $f\notin E(T)$ externally active if $f$ has the largest label among the edges in the the fundamental cycle created by adding $f$ to $T$. Let $\mathrm{ia}(T)$ and $\mathrm{ea}(T)$ be the number of internally and externally active edges, respectively. Then
$$T_G(x,y)=\sum_{T\in \mathcal{T}(G)}x^{\mathrm{ia}(T)}y^{\mathrm{ea}(T)},$$
where the summation runs over all spanning trees of $G$.
\end{Th}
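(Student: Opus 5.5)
The plan is to prove the theorem by induction on the number of edges. We compare the activity sum $\Phi(G):=\sum_{T\in\mathcal{T}(G)}x^{\mathrm{ia}(T)}y^{\mathrm{ea}(T)}$, computed for the given labelling, with the deletion--contraction recursion of the Tutte polynomial. We take $T_G(x,y)$ to be defined by the rank-generating function $\sum_{A\subseteq E}(x-1)^{r(E)-r(A)}(y-1)^{|A|-r(A)}$. It then satisfies three rules. If $e$ is a bridge, $T_G=x\,T_{G/e}$. If $e$ is a loop, $T_G=y\,T_{G-e}$. Otherwise $T_G=T_{G-e}+T_{G/e}$. The graph with no edges has $T_G=1$.

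We show that $\Phi$ obeys the same three rules, with the labels inherited by the minors. The key choice is to apply the recursion to the edge $e$ with the \emph{smallest} label. Since activity asks for the largest label in a fundamental cut or cycle, the presence or absence of $e$ in such a cut or cycle never changes whether another edge is active. (Choosing the largest label instead would break this, since $e$ would then destroy the activity of other edges.)

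\textbf{Bridge.} If $e$ is a bridge, every spanning tree contains $e$. The fundamental cut of $e$ is $\{e\}$, so $e$ is internally active. The map $T\mapsto T/e$ is a bijection onto $\mathcal{T}(G/e)$. For $f\ne e$, fundamental cycles of $f\notin T$ do not use the bridge $e$, and fundamental cuts of $f\in T$ are identical in $G$ and $G/e$. Hence $\Phi(G)=x\,\Phi(G/e)$.

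\textbf{Loop.} If $e$ is a loop, no tree contains it. Its fundamental cycle is $\{e\}$, so it is always externally active. All other cuts and cycles are unchanged in $G-e$, so $\Phi(G)=y\,\Phi(G-e)$.

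\textbf{Neither bridge nor loop.} Split $\mathcal{T}(G)$ into trees avoiding $e$, which are exactly the spanning trees of $G-e$, and trees containing $e$, which correspond bijectively to $\mathcal{T}(G/e)$ via contraction.
\begin{itemize}
\item For $T\not\ni e$: the fundamental cycle of $f\notin T$, $f\neq e$, uses only $f$ and tree edges, so it is the same in $G$ and $G-e$. The fundamental cut of $f\in T$ in $G$ is its cut in $G-e$, possibly together with $e$; since $e$ has the minimal label, whether $f$ is maximal is unaffected. The edge $e$ itself is externally inactive, because its fundamental cycle contains other edges, all of larger label.
\item For $T\ni e$: cuts of $f\in T$, $f\ne e$, coincide in $G$ and $G/e$. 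The fundamental cycle of $f\notin T$ in $G$ is its cycle in $G/e$, possibly with $e$ added, and again minimality of $e$'s label makes this irrelevant. The edge $e$ is internally inactive, since it is not a bridge and so its fundamental cut contains larger-labelled edges.
\end{itemize}
Thus $\Phi(G)=\Phi(G-e)+\Phi(G/e)$.

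With the base case $\Phi=1$ for the edgeless graph, induction gives $\Phi(G)=T_G(x,y)$. As a by-product, $\Phi$ is independent of the labelling. The statement assumes $G$ is connected, but the induction passes through minors such as $G-e$; these stay connected because $e$ is not a bridge in that case.

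The main obstacle is the case analysis showing that activities of edges $f\ne e$ are preserved under deletion and contraction. Cuts and cycles really do change by adding or removing $e$, and the argument hinges entirely on choosing $e$ to have the extreme label opposite to the one used in the definition of activity. The remaining bookkeeping is routine.
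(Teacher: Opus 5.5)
The paper gives no proof of this statement. It quotes the result from Tutte, who used the activity expansion as the \emph{definition} of $T_G$, and only remarks that independence of the labelling is surprising. So your proposal is not an alternative to a proof in the paper but a proof of a cited fact. It is the classical deletion--contraction argument, and it is correct.

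You adapted the textbook version correctly. That version defines activity by the smallest label in a cut or cycle and recurses on the edge with the largest label. Under the paper's largest-label convention you recurse on the edge with the minimal label, which is the right adaptation.

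Your case checks are also right:
\begin{itemize}
\item \textbf{$T\ni e$, cuts.} The fundamental cut of $f\neq e$ is literally unchanged in $G/e$. The endpoints of $e$ lie in one component of $T-f$. Edges parallel to $e$, which become loops in $G/e$, cross neither cut.
\item \textbf{$T\ni e$, cycles.} Fundamental cycles lose at most $e$, and minimality of $e$'s label makes that irrelevant to activity.
\item \textbf{$T\not\ni e$.} The situation is symmetric: cuts gain at most $e$, and cycles are unchanged.
\end{itemize}

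Two points deserve to be stated explicitly. First, the induction must run over connected multigraphs, because contraction creates loops and parallel edges even when $G$ is simple. The labels must be allowed to be distinct elements of an arbitrary ordered set, since minors inherit labels from a subset of $\{1,\dots,m\}$. Your loop case and your phrase ``labels inherited by the minors'' show you intend this. Second, the three recursion rules for the rank-generating function are quoted rather than proved. Each is a two-line check obtained by splitting $A\subseteq E$ according to whether $e\in A$.

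Compared with the paper's bare citation, your route proves labelling-independence as a by-product. The same argument also proves the matroid version, with fundamental circuits and cocircuits in place of cycles and cuts, and deletion or contraction of the minimal element. That matroid version is what the paper actually relies on when it invokes Lemma~\ref{conn} in the proof of Theorem~\ref{ratio-minimum}.
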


Tutte originally used this formula as a definition of the Tutte polynomial \cite{tutte1954contribution}. This theorem or definition naturally extends to matroids even though Tutte's original paper only concerned graphs. This characterization of the Tutte polynomial immediately shows that the coefficients of the Tutte polynomial are non-negative. In this theorem, the same edge labelling must be used for every spanning tree. For those who have never seen this definition before, it might be very surprising that the Tutte polynomial is independent of the actual choice of the labeling.

To explain the connection between $T_G(x,y)$ and $\widetilde{T}_H(x,y)$, we need the concept of the local basis exchange graph.

\begin{Def} The local basis exchange graph $H[T]$ of a graph $G=(V,E)$ with respect to a spanning tree  $T$ is defined as follows.
 The graph $H[T]$ is a bipartite graph whose vertices are the edges of $G$. One bipartite class consists of the edges of $T$, the other consists of the edges of $E\setminus T$, and we connect a spanning tree edge $e$ with a non-tree edge $f$ if $f$ is in the cut determined by $e$ and $T$, equivalently, $e$ is in the cycle determined by $f$ and $T$. This is also equivalent to $T-e+f$ being a spanning tree of $G$ again.
 
 Clearly, this definition works for general matroids $M=(E,\mathcal{I})$ and their bases. 
 If $A$ is a basis, then let $H[A]$ be the bipartite graph with $V(H[A])=E$,  where one part consists of the elements of $A$, the other part consists of $E\setminus A$, and $e\in A$ and $f\in E\setminus A$ are adjacent in the bipartite graph $H[A]$ if $A-e+f$ is again a basis.
\end{Def}
Figure 1 shows a graph $G$ with a spanning tree $T$ and the bipartite graph $H[T]$ obtained from $T$. 
\bigskip

For a fixed labeling of the edges of $G$, we get a labeling of the vertices of $H[T]$, and the internally (externally) active edges of $G$ correspond to internally (externally) active vertices of $H[T]$, so the two definitions of internal and external activity are compatible. The following lemma connects the Tutte polynomial with the permutation Tutte polynomial.

\begin{figure}[htp] 
\begin{tikzpicture}[, scale=0.33, baseline=0pt, node distance={20mm}, thick, main/.style = {draw, circle, fill=black}] 
\node[main] (1) {}; 
\node[main] (2) [above right of=1] {}; 
\node[main] (3) [below right of=2]{}; 
\node[main] (4) [below of=1]{}; 
\node[main] (5) [below of=3]{}; 
\node[main] (6) [below right of=4]{}; 
\draw [color=blue,line width=2pt](1) edge node[
above,black]{$1$} (2) ; 
\draw [color=blue, line width=2pt](1) edge node[pos=0.3, below, black]{$2$} (3) ; 
\draw [color=blue, line width=2pt](1) edge node[left, black]{$3$} (4) ; 
\draw [color=red, line width=2pt](2) edge  node[pos=0.15, right, black]{$4$} (5) ; 
\draw [color=red, line width=2pt](3) edge  node[pos=0.4, left, black]{$6$} (6) ; 
\draw [color=blue, line width=2pt](4) edge node[pos=0.3, above, black]{$5$} (5) ; 
\draw [color=blue, line width=2pt](4) edge  node[below, black]{$7$} (6) ; 
\draw [color=red, line width=2pt](5) edge  node[below, black]{$8$} (6) ; 
\end{tikzpicture} 
\qquad \qquad
\begin{tikzpicture}[, scale=0.33, baseline=0pt, node distance={18mm}, thick, main/.style = {draw, circle}]
\node[main, fill=blue, label=$1$] (1) {}; 
\node[main, fill=blue, label=$2$] (2) [right of=1]{};
\node[main,fill=blue, label=$3$] (3) [right of=2]{}; 
\node[main,fill=blue, label=$5$] (4) [right of=3]{}; 
\node[main,fill=blue, label=$7$] (5) [right of=4]{}; 
\node[main,fill=red, label={[yshift=-30pt]$4$}] (6) [below of=2]{}; 
\node[main,fill=red, label={[yshift=-30pt]$6$}] (7) [below of=3]{};
\node[main,fill=red, label={[yshift=-30pt]$8$}] (8) [below of=4]{};
\draw (1) -- (6) ; 
\draw (2) -- (7) ; 
\draw (3) -- (6) ; 
\draw (3) -- (7) ; 
\draw (4) -- (6) ; 
\draw (4) -- (8) ; 
\draw (5) -- (7) ; 
\draw (5) -- (8) ; 
\end{tikzpicture} 
\caption{An example for a graph $G$ and the local basis exchange graph $H[T]$ obtained from a spanning tree $T$.}
\end{figure}

\begin{Lemma}[Beke, Cs\'aji, Csikv\'ari, Pituk \cite{beke2024permutation}] \label{conn}
Let $M$ be a matroid. For each basis $A$ of $M$, let $H[A]$ be the local basis exchange graph with respect to $A$. Then
$$T_M(x,y)=\sum_{A\in \mathcal{B}(M)}\widetilde{T}_{H[A]}(x,y),$$
where the sum runs over the set of bases $\mathcal{B}(M)$ of $M$.
\end{Lemma}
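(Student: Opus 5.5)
The plan is to average Tutte's activity expansion over all labelings of the ground set. The key observation is that Theorem~\ref{ia-ea-characterization}, in its matroid form due to Crapo, holds for \emph{every} fixed ordering of $E$. Averaging that identity over all $m!$ orderings and exchanging the two sums should produce exactly the permutation Tutte polynomials of the local basis exchange graphs.

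Let $|E|=m$ and identify $E$ with $[m]$, so that a permutation $\pi\in S_m$ is the same thing as a labeling of $E$. For a basis $A$ and a labeling $\pi$, write $\ia_\pi(A)$ and $\ea_\pi(A)$ for the numbers of internally and externally active elements. The matroid version of Theorem~\ref{ia-ea-characterization} gives
$$T_M(x,y)=\sum_{A\in\mathcal B(M)}x^{\ia_\pi(A)}y^{\ea_\pi(A)}$$
for every $\pi$. Summing over $\pi\in S_m$, dividing by $m!$ and interchanging the sums yields
$$T_M(x,y)=\sum_{A\in\mathcal B(M)}\frac{1}{m!}\sum_{\pi\in S_m}x^{\ia_\pi(A)}y^{\ea_\pi(A)}.$$
So it suffices to show, for each fixed basis $A$ and each $\pi$, that $\ia_\pi(A)$ and $\ea_\pi(A)$ equal the numbers of internally active vertices in the part $A$ and externally active vertices in the part $E\setminus A$ of $H[A]$ under $\pi$, in the sense of Definition~\ref{main-def}. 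This is a pointwise identity, so it suffices to check it element by element.

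For $e\in A$, matroid internal activity means that $e$ carries the largest label in its fundamental cocircuit $C^*(A,e)$. This is the unique cocircuit contained in $(E\setminus A)\cup\{e\}$ that contains $e$, and I will show that
$$C^*(A,e)=\{e\}\cup\{f\notin A:\ A-e+f\in\mathcal B\}=\{e\}\cup N_{H[A]}(e).$$
To see this, note that $C^*(A,e)$ is the complement of the closure of $A-e$, i.e.\ of the hyperplane spanned by $A-e$. An element $f\notin A$ lies outside this hyperplane exactly when $A-e+f$ has rank $r$, that is, when $A-e+f$ is a basis. Hence $e$ is internally active if and only if $\pi(e)>\max_{f\in N_{H[A]}(e)}\pi(f)$. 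The empty-maximum convention $-\infty$ covers coloops, where $N_{H[A]}(e)=\emptyset$ and $e$ is always active.

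Dually, for $f\notin A$, external activity means that $f$ carries the largest label in its fundamental circuit $C(A,f)\subseteq A+f$, and I will show that
$$C(A,f)=\{f\}\cup\{e\in A:\ A-e+f\in\mathcal B\}=\{f\}\cup N_{H[A]}(f).$$
Indeed, $A-e+f$ is independent, hence a basis, exactly when removing $e$ destroys the unique circuit of $A+f$, i.e.\ when $e\in C(A,f)$. Loops correspond to isolated vertices of $H[A]$ and are always active, again matching the convention.

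The only step needing genuine care is this identification of the fundamental circuits and cocircuits with the closed neighbourhoods in $H[A]$, including the degenerate loop and coloop cases. Everything else is the formal averaging argument, which uses only that the activity expansion is independent of the labeling.
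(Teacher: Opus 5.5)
Your proposal is correct and follows essentially the same argument as the cited lemma. The paper does not prove the lemma itself, but it remarks just before stating it that activities in $G$ match activities of the corresponding vertices of $H[T]$; averaging the activity expansion over all $m!$ labelings then gives the sum of permutation Tutte polynomials. Your identification of the fundamental cocircuit and circuit with the closed neighbourhoods in $H[A]$, including the loop and coloop cases handled by the empty-maximum convention, supplies exactly the compatibility the paper asserts without proof.
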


We use the following bound on the permutation Tutte polynomial  proved in \cite{beke2024permutation}, which is a simple consequence of Harris's  inequality \cite{harris1960lower}.

\begin{Th}[Beke, Cs\'aji, Csikv\'ari, Pituk \cite{beke2024permutation}] \label{lower-bound}
Let $H=(A,B,E)$ be an arbitrary bipartite graph, and let $d_i$ be the degree of a vertex $i$. Suppose that $0\leqslant x\leqslant 1$ and $y\geqslant 1$, or $0\leqslant   y\leqslant 1$ and $x\geqslant 1$. Then
$$\widetilde{T}_H(x,y)\geqslant \prod_{i\in A}\left(1+\frac{x-1}{d_i+1}\right) \cdot \prod_{j\in B}\left(1+\frac{y-1}{d_j+1}\right).$$
\end{Th}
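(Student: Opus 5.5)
The plan is to turn the uniform random permutation into a product measure and then apply Harris's inequality to a product of monotone nonnegative factors. Let $(U_v)_{v\in V(H)}$ be i.i.d.\ uniform random variables on $[0,1]$. Almost surely they are distinct, and the permutation ordering them is uniform on $S_m$. Hence
$$\widetilde{T}_H(x,y)=\mathbb{E}\Big[\prod_{i\in A}\big(1+(x-1)\mathbf{1}_{\mathcal{A}_i}\big)\prod_{j\in B}\big(1+(y-1)\mathbf{1}_{\mathcal{B}_j}\big)\Big],$$
where $\mathcal{A}_i=\{U_i>\max_{j\in N_H(i)}U_j\}$ and $\mathcal{B}_j=\{U_j>\max_{i\in N_H(j)}U_i\}$. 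This works because $x^{\ia}=\prod_{i\in A}x^{\mathbf 1_{\mathcal A_i}}=\prod_{i\in A}(1+(x-1)\mathbf 1_{\mathcal A_i})$, and similarly for $y$. By symmetry of i.i.d.\ variables, $\mathbb{P}(\mathcal{A}_i)=1/(d_i+1)$ and $\mathbb{P}(\mathcal{B}_j)=1/(d_j+1)$. So the claimed lower bound is exactly the product of the expectations of the individual factors. It therefore suffices to show that all factors are monotone in the same direction with respect to some product partial order.

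Consider first the case $0\leqslant x\leqslant 1$, $y\geqslant 1$. Reparametrize the $B$-side by $V_j=1-U_j$, keep $U_i$ for $i\in A$, and order $[0,1]^{V(H)}$ coordinatewise in the variables $(U_i)_{i\in A},(V_j)_{j\in B}$.
\begin{itemize}
\item The event $\mathcal{A}_i$ says $U_i>\max_j(1-V_j)$. It is increasing in $U_i$ and in each $V_j$, so it is an increasing event. Since $x-1\leqslant 0$, the factor $1+(x-1)\mathbf 1_{\mathcal A_i}$ is nonnegative and decreasing.
\item The event $\mathcal{B}_j$ says $1-V_j>\max_i U_i$. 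It is decreasing in all the variables. Since $y-1\geqslant 0$, the factor $1+(y-1)\mathbf 1_{\mathcal B_j}$ is nonnegative and decreasing.
\end{itemize}
Thus every factor is a nonnegative decreasing function on a product probability space. Harris's inequality \cite{harris1960lower}, applied inductively to products of such functions (a product of nonnegative decreasing functions is again nonnegative decreasing), gives $\mathbb{E}[\prod f_k]\geqslant \prod \mathbb{E}[f_k]$. This is the desired bound.

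The case $0\leqslant y\leqslant 1$, $x\geqslant 1$ is symmetric: flip the $A$-side variables instead, setting $U_i\mapsto 1-U_i$. Alternatively, exchange the roles of $A$ and $B$, which swaps $\ia$ and $\ea$.

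The only delicate point is choosing the reversal of one side so that the internal and external activity events become monotone in the same direction. A priori they are not: $\mathcal{A}_i$ wants $U_i$ large and $B$-neighbours small, while $\mathcal{B}_j$ wants the opposite. One also needs the sign conditions on $x-1$ and $y-1$ to turn both families of factors into decreasing functions, which explains the hypotheses. Ties have probability zero and can be ignored.
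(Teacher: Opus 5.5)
Your proof is correct. It is essentially the argument the paper has in mind when it cites this bound from Beke--Cs\'aji--Csikv\'ari--Pituk as a simple consequence of Harris's inequality: you realize the uniform permutation by i.i.d.\ uniform labels, reverse the order on one side of the bipartition so that every factor $1+(x-1)\mathbf{1}_{\mathcal{A}_i}$ and $1+(y-1)\mathbf{1}_{\mathcal{B}_j}$ is nonnegative and monotone in the same direction (this is where the sign conditions on $x-1$ and $y-1$ enter), and then apply Harris's inequality inductively.
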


Now we are ready to prove Theorem~\ref{ratio-minimum}.

\begin{proof}[Proof of Theorem~\ref{ratio-minimum}]
We have
\begin{align*}
\frac{I}{I_r} &= \frac{T_M(2,1)}{I_r}\\
&= \frac{1}{I_r}\sum_{A\in \mathcal{I}_r}\widetilde{T}_{H[A]}(2,1)\\
&\geqslant \frac{1}{I_r}\sum_{A\in \mathcal{I}_r}\prod_{e\in A}\left(1+\frac{1}{d_{H[A]}(e)+1}\right)\\
&= \frac{1}{I_r}\sum_{A\in \mathcal{I}_r}\prod_{e\in A}\left(1+\frac{1}{\delta(A-e)}\right)\\
&\geqslant \left(\prod_{A\in \mathcal{I}_r}\prod_{e\in A}\left(1+\frac{1}{\delta(A-e)}\right)\right)^{1/I_r}\\
&= \left(\prod_{I\in \mathcal{I}_{r-1}}\left(1+\frac{1}{\delta(I)}\right)^{\delta(I)}\right)^{1/I_r}
\end{align*}
Taking logarithms we get that
$$\ln\left(\frac{I}{I_r}\right)\geqslant \frac{I_{r-1}}{I_r}\cdot \frac{1}{I_{r-1}}\sum_{I\in \mathcal{I}_{r-1}}\delta(I)\ln \left(1+\frac{1}{\delta(I)}\right).$$
Since the function $\delta \to \delta \ln\left(1+\frac{1}{\delta}\right)$ is monotone increasing, we obtain a  further lower bound by replacing the average $\frac{1}{I_{r-1}}\sum_{I\in \mathcal{I}_{r-1}}\delta(I)\ln \left(1+\frac{1}{\delta(I)}\right)$ with the minimum $\delta \ln\left(1+\frac{1}{\delta}\right)$.
\end{proof}

\subsection{Second proof of Theorem~\ref{ratio-minimum}} In this section, we give another proof of
Theorem~\ref{ratio-minimum}. This proof was found by ChatGPT 5.5 Pro. It relies on the bijective basis exchange theorem instead of the permutation Tutte polynomial. 

\begin{proof}[Second proof of Theorem~\ref{ratio-minimum}]

Let $\cB=\cI_r$, and for an independent set $A\in\cI$, define
\[
 \Delta(A):=|\{B\in\cB:A\subseteq B\}|.
\]
Thus if $S\in\cI$ and $|S|=r-1$, then $\Delta(S)$ is the number of bases containing $S$. In this section write
\[
 \delta(S):=\Delta(S)
 \qquad (S\in\cI,\ |S|=r-1),
\]
and
\[
 \delta:=\min\{\delta(S):S\in\cI,\ |S|=r-1\}.
\]
Since every independent set extends to a basis, $\delta\geq 1$.

We start from the exact identity
\begin{equation} \label{eq: 3}
 \frac{I}{I_r}
 =
 \frac1{I_r}
 \sum_{B\in\cB}\sum_{A\subseteq B}\frac{1}{\Delta(A)}. 
\end{equation}
Indeed, each independent set $A$ is contained in exactly $\Delta(A)$ bases, and hence its total contribution to the right-hand side is $1/I_r$ times $\Delta(A)\cdot 1/\Delta(A)$.

Fix a basis $B$ and an independent set $A\subseteq B$. We claim that
\begin{equation} \label{eq: 4}
 \Delta(A)
 \leq
 \prod_{e\in B\setminus A}\Delta(B\setminus\{e\}). 
\end{equation}
To prove this, contract $A$. In $M/A$, the set $C=B\setminus A$ is a basis. Bases of $M/A$ correspond exactly to bases of $M$ containing $A$, so their number is $\Delta(A)$.

For a basis $D$ of $M/A$, the bijective basis-exchange theorem \cite{brualdi1969comments} gives a bijection
\[
 \varphi_D:C\to D
\]
such that
\[
 C-e+\varphi_D(e)
\]
is a basis of $M/A$ for every $e\in C$. Choose one such bijection for each $D$. Then the map
\[
 D\longmapsto (\varphi_D(e))_{e\in C}
\]
is injective, because the tuple determines $D$ as its set of entries. For fixed $e\in C$, the number of possible values of $\varphi_D(e)$ is exactly
\[
 \Delta(B\setminus\{e\}),
\]
since $C-e+x$ is a basis of $M/A$ precisely when $B-e+x$ is a basis of $M$, i.e. precisely when that basis contains $B\setminus\{e\}$. This proves \eqref{eq: 4}.

Taking reciprocals in (\ref{eq: 4}) gives, for every $A\subseteq B$,
$$
 \frac{1}{\Delta(A)}
 \geqslant
 \prod_{e\in B\setminus A}\frac1{\Delta(B\setminus\{e\})}.
$$
Therefore
\begin{equation} \label{eq: 5}
 \sum_{A\subseteq B}\frac{1}{\Delta(A)}
 \geqslant
 \sum_{A\subseteq B}
 \prod_{e\in B\setminus A}\frac1{\Delta(B\setminus\{e\})}
 =
 \prod_{e\in B}\left(1+\frac1{\Delta(B\setminus\{e\})}\right). 
\end{equation}
Combining (\ref{eq: 3}) and (\ref{eq: 5}),
\[
 \frac{I}{I_r}
 \geqslant
 \frac{1}{I_r}
 \sum_{B\in\cB}
 \prod_{e\in B}\left(1+\frac{1}{\Delta(B\setminus\{e\})}\right).
\]
Since $\ln$ is concave, Jensen's inequality implies
\begin{equation} \label{eq: 6}
 \ln\left(\frac{I}{I_r}\right)
 \geq
 \frac1{I_r}
 \sum_{B\in\cB}
 \sum_{e\in B}
 \ln\left(1+\frac1{\Delta(B\setminus\{e\})}\right). 
\end{equation}

Now group the double sum in \eqref{eq: 6} according to the independent $(r-1)$-set
\[
 S=B\setminus\{e\}.
\]
For a fixed $S\in\cI$ with $|S|=r-1$, there are exactly $\delta(S)$ bases containing $S$, and for each such basis $B$ there is a unique element $e\in B$ with $S=B\setminus\{e\}$. Hence \eqref{eq: 6} becomes
\[
 \ln\left(\frac{I}{I_r}\right)
 \geq
 \frac{1}{I_r}
 \sum_{\substack{S\in\cI\\ |S|=r-1}}
 \delta(S)\ln\left(1+\frac1{\delta(S)}\right).
\]
This proves the stronger inequality.

Finally, the function
\[
 f(x)=x\ln\left(1+\frac1x\right)
\]
is increasing for $x>0$; hence $\delta(S)\geq\delta$ implies
\[
 \delta(S)\ln\left(1+\frac1{\delta(S)}\right)
 \geq
 \delta\ln\left(1+\frac1\delta\right).
\]
There are $I_{r-1}$ independent sets of size $r-1$, so the desired consequence follows.
\end{proof}

\section{Proofs of Theorems~\ref{forest-tree-ratio-2} and \ref{forest-tree-ratio}.} \label{forest-tree-ratio-2-sect}

In this section, we prove Theorems~\ref{forest-tree-ratio-2} and \ref{forest-tree-ratio} together with the following quantitative version of Theorem~\ref{forest-tree-ratio-2}.

\begin{Th} \label{forest-tree-ratio-2-quantitative}
Let $n\geqslant 8$. Let $G$ be a connected graph on $n$ vertices with $m=\binom{n}{2}-h$ edges, where $h\geqslant \lfloor 0.49n+1/2\rfloor$. Then $Q(G)>Q(K_n)$.
\end{Th}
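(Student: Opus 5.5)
The plan is to combine Theorem~\ref{ratio-minimum} for the graphic matroid with the RSW-formula (Theorem~\ref{thm: RSW-formula}) into a lower bound on $\ln Q(G)$ that is visibly larger than $\ln Q(K_n)$ once $h\geqslant \lfloor 0.49n+1/2\rfloor$. On the $K_n$ side I will use an explicit upper bound for $Q_n$. Rényi's asymptotics say that $\ln Q_n=\frac12+O(1/n)$, and I would make this effective by bounding $F_s(K_n)/T_n$ from above. A convenient route is the known formula $F_s(K_n)=\binom{n}{s}\sum \ldots$, or the generating-function form. From it I expect a bound like $\ln Q_n\leqslant \frac12-\frac{c_0}{n}$ with an explicit constant $c_0$ valid for $n\geqslant 8$. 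Small cases can be checked with the notebook if needed.

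On the $G$ side, Theorem~\ref{ratio-minimum} gives
$$\ln Q(G)\geqslant \frac{1}{T(G)}\sum_{F\in\mathcal F_2(G)}\delta(F)\ln\Bigl(1+\frac1{\delta(F)}\Bigr),$$
where $\delta(F)$ is the number of edges of $G$ between the two components of $F$. I use $x\ln(1+1/x)\geqslant 1-\frac1{2x}$. Regrouping the sum over pairs (spanning tree, tree edge) turns the main term into $F_2(G)/T(G)$. The correction becomes $\frac{1}{2T(G)}\sum_{F}1=\frac{F_2(G)}{2T(G)}\cdot\frac{1}{\text{avg}}$, and a cleaner way to handle it is to write the correction as $\frac{1}{2}\sum_{e}\Pr_T(e\in T)\cdot\mathbb E[\ldots]$. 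Concretely, $\sum_F \frac{1}{\delta(F)}\cdot\delta(F)\cdot\frac{1}{\delta(F)}$ equals $\sum_{T}\sum_{e\in T}\frac{1}{\delta(T-e)}$ divided by $T(G)$. Since $\delta(T-e)\geqslant \delta_{\min}$, this is at most $\frac{n-1}{\delta}$ in the dense regime, so the correction is $O(1)$ and I need to track it. Next I lower bound $F_2(G)/T(G)$ with the RSW-formula. I drop the second sum (which is nonnegative) and use Cauchy--Schwarz together with Foster's theorem $\sum_{e}R_{\eff}(e)=n-1$. This gives
$$\frac{F_2(G)}{T(G)}\geqslant \frac{(n-1)^2}{4m}.$$
With $m=\binom n2-h$ this equals $\frac{(n-1)^2}{2n(n-1)-4h}=\frac12\cdot\frac{1}{1-\frac{2h}{n(n-1)}}\cdot\frac{n-1}{n}$, which is roughly $\frac12+\frac{h}{n^2}-\frac1{2n}$.

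The comparison then reduces to showing that main term minus correction exceeds the upper bound on $\ln Q_n$. The gain from $h$ is of order $h/n^2\approx 0.49/n$. The losses are the $-\frac1{2n}$ from $\frac{n-1}{n}$ and the $\delta$-correction, which is about $\frac{1}{2}\cdot\frac{F_2}{T}\cdot\frac{1}{n}$ when all cuts are of size about $n$. Both are also of order $1/n$, so everything hinges on constants. This is where I expect the main obstacle: the crude bounds lose exactly at order $1/n$, and matching the exact $1/n$ term of $\ln Q_n$ is delicate. To handle it, I would use the exact edge-by-edge contributions instead of the crude ones. I would keep the second RSW sum, using that $R_{\eff}(e)\approx 2/n$ in near-complete graphs. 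I would also treat low-degree vertices separately: if some vertex has small degree $d$, its pendant-like edges have $R_{\eff}$ close to $1/d$, and this gives a large extra contribution to $F_2/T$. So I split into a case where the minimum degree is at least $n/2$, where $\delta(F)\geqslant \delta\approx$ min degree controls the correction, and a case where it is smaller, where the resistance contributions dominate. Finally, I would verify the resulting explicit inequality for all $n\geqslant 8$, by calculus for large $n$ and by direct numerical checking for the finitely many remaining $n$.
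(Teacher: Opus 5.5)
There is a genuine gap, and it begins with the target. You expect $\ln Q_n\leqslant\frac12-\frac{c_0}{n}$. In fact $Q_n>\sqrt{e}$ and $\ln Q_n=\frac12+\frac{5}{2n}+O(n^{-2})$; the paper proves $\ln Q_n\leqslant\frac12+\frac{5}{2n}-\frac{3}{2n^2}$ in Lemma~\ref{Q_n-upper-bound}. Already $F_2(K_n)/T(K_n)=\frac12+\frac{5}{2n}-\frac{3}{n^2}$, and about $\frac3n$ of this comes from the second RSW sum. Your lower bound on $\ln Q(G)$ must therefore reproduce $F_2(K_n)/T(K_n)$ up to $O(n^{-2})$. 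It must also absorb the loss from $x\ln(1+1/x)\geqslant 1-\frac{1}{2x}$, which is about $\frac{1}{4n}$ even for $K_n$. So dropping $\mathrm{RSW}_2$ is fatal, and the heuristic $R_{\eff}(e)\approx 2/n$ is not a usable bound. You need a rigorous estimate that is exact for $K_n$, such as $\mathrm{RSW}_2\geqslant\frac6n\mathrm{RSW}_1$ (Lemma~\ref{RSW-1-RSW-2}: average over the free vertex $q$ and keep only the edges at $q$). This gives $F_2(G)/T(G)\geqslant(1+\frac6n)\frac{(n-1)^2}{4m}\approx\frac12+\frac{5}{2n}+\frac{h}{n^2}$.

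Even with that, the central step of your plan fails: bounding the correction $\frac{1}{2T(G)}\sum_F\delta(F)^{-1}$ by $\frac{F_2(G)}{2T(G)\delta}$ via the global minimum cut. Put $h=cn$ and let $\overline{G}$ be a star, so $\delta=n-1-h\approx(1-c)n$. For $c=0.49$ and large $n$ this is at least $n/2$, which is your high-minimum-degree branch. Your estimate becomes $\frac12+\frac{5}{2n}+\frac{c}{n}-\frac{1}{4(1-c)n}+O(n^{-2})$. But $c>\frac{1}{4(1-c)}$ is equivalent to $(2c-1)^2<0$, so the gain from the missing edges never pays for this loss. Numerically, for $n=1000$, $h=490$ the bound is about $0.5024970$, below $\ln Q_{1000}\approx 0.5024982$, even with $\delta\ln(1+1/\delta)$ evaluated exactly.

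What is missing is that only a few $2$-forests have small $\delta(F)$, and the paper exploits this in two ways.
(a) If $\delta_v=\delta$, take $q$ to be a minimum-degree vertex and use Lemma~\ref{effective-resistance-degree-d}. This gives $\mathrm{RSW}_2\geqslant\frac{3}{4\delta}\bigl(1+\frac{\delta-1}{n}\bigr)^2$ (Lemma~\ref{RSW-2-lower-bound}), which exceeds $\frac3n$ when $\delta<n$ and compensates the loss. The paper uses this for $h\geqslant n/2$; asymptotically it works for $c>\frac{3-\sqrt6}{2}$.
(b) For $h<n/2$, keep the averaged form of Theorem~\ref{ratio-minimum} and write $\frac{1}{\delta(S)}=\bigl(\frac{1}{\delta(S)}-\frac{1}{|S_+||S_-|}\bigr)+\frac{1}{|S_+||S_-|}$ with $|S_+||S_-|\geqslant n-1$. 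The first part is controlled through the Kirchhoff index: $\frac{1}{T(G)}\sum_S(|S_+||S_-|-\delta(S))=\sum_{i=1}^{n-1}\frac{n-\lambda_i}{\lambda_i}\leqslant\frac{2h}{n-h-1}$, since $\lambda_{n-1}\geqslant n-h-1$. This cuts the loss to about $\frac{1}{4n}$, so $h\geqslant\frac n4+\frac92$ suffices for $n\geqslant 20$, and a computed table covers $8\leqslant n\leqslant 20$.

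Your plan also leaves sparse graphs and graphs with $\delta<\delta_v$ untreated. There $\delta\ln(1+1/\delta)$ can be far below $1$, and no low-degree vertex helps. The paper handles these separately:
\begin{itemize}
\item $m\leqslant(n-1)^2/3$ directly via $Q(G)\geqslant 1+F_2(G)/T(G)$;
\item $\delta_v>\delta\geqslant n/4$ by reducing to the previous case with Lemma~\ref{edge-number-estimate};
\item $\delta_v>\delta$ with $\delta<n/4$ by splitting the Cauchy--Schwarz bound for $\mathrm{RSW}_1$ along a minimum cut $C$, using $\sum_{e\in C}R_{\eff}(e^-,e^+)\geqslant 1$.
\end{itemize}
Finally, $\sum_F\delta(F)^{-1}=\sum_{(T,e)}\delta(T-e)^{-2}$, so the correction is $O(1/n)$ in the dense regime, not $O(1)$ as you first wrote.
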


The main reason why we choose $\lfloor 0.49n+1/2\rfloor$ is that in Section~\ref{incusion-exclusion-sect} we will show that if $h\leqslant 0.49n$, then it is again true that $Q(G)\geqslant Q(K_n)$. Together, the two parts imply that $Q(G)\geqslant Q_n$ for every $n\geqslant 1$.
\bigskip

To prove our theorems we will heavily rely  on the formula~\eqref{RSW-formula} that expresses the ratio $\frac{F_2(G)}{T(G)}$ in terms of effective resistances. We will only use very basic facts about electrical networks. For a thorough introduction to the topic,  we recommend Chapters 2 and 9 of Bollob\'as's book \cite{bollobas1998modern}.  

An electrical network is just a graph $G$ with a function $r:E\to \mathbb{R}_{\geq 0}$. The value $r(e)$ is called the resistance of the edge $e$, and we will assume throughout this paper that $r(e)=1$ for each edge $e$. After fixing a source vertex $s$ and sink vertex $t$ there exists a unique potential function $\pi: V(G)\to [0,1]$ with the property that
$\pi(s)=0$, $\pi(t)=1$ and at each vertex $u\neq s,t$ we have
\begin{equation} \label{current-flow}
\sum_{w\in N_G(u)}(\pi(w)-\pi(u))=0.
\end{equation}
The physical interpretation of the potential $\pi$ is that in an edge $(u,v)$ there is a current $i_{vu}=\pi(v)-\pi(u)$ that flows with the convention that if it is negative, then we think of it as a current $\pi(u)-\pi(v)$ flowing from $u$ to $v$. Equation~\eqref{current-flow} simply says that for a vertex $u\neq s,t$ the same amount of current flows into $u$ as it leaves. The effective resistance between $s$ and $t$ is then
$$R_{\eff}(s,t)=\left(\sum_{v\in N_G(s)}(\pi(v)-\pi(s))\right)^{-1}=\left(\sum_{v\in N_G(s)}\pi(v)\right)^{-1}.$$
For a physical interpretation, we again refer to the book \cite{bollobas1998modern}. Lemma~\ref{effective-combinatorial} connects the effective resistance with graph theory. Effective resistances are also related to random walks.
\bigskip

Recall the RSW-formula (Theorem~\ref{thm: RSW-formula}):
$$\frac{F_2(G)}{T(G)}=\frac{1}{4}\sum_{e\in E}R_{\eff}(e^-,e^+)^2+\frac{3}{4}\sum_{e\in E}(R_{\eff}(q,e^-)-R_{\eff}(q,e^+))^2.$$
Let
$$\mathrm{RSW}_1=\frac{1}{4}\sum_{e\in E}R_{\eff}(e^-,e^+)^2\ \ \ \text{and}\ \ \ \mathrm{RSW}_2=\frac{3}{4}\sum_{e\in E}(R_{\eff}(q,e^-)-R_{\eff}(q,e^+))^2.$$
We collect a few observations regarding $\mathrm{RSW}_1$ and $\mathrm{RSW}_2$.

\begin{Lemma} \label{effective-combinatorial}
For $u,v\in V(G)$ such that $u\neq v$, let $G_{u,v}$ be the graph obtained by identifying the vertices $u$ and $v$ in $G$. Then
$$R_{\eff}(u,v)=\frac{T(G_{u,v})}{T(G)}.$$
In particular, if $(u,v)\in E(G)$, then $R_{\eff}(u,v)$ is the probability that a uniformly chosen random spanning tree contains the edge $(u,v)$.
\end{Lemma}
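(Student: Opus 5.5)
The plan is to derive the identity from the classical Kirchhoff description of effective resistance, which expresses $R_{\eff}$ through ratios of spanning tree counts (equivalently, cofactors of the Laplacian). Let $L$ be the Laplacian of $G$ with unit conductances. Inject a unit current at $u$ and extract it at $v$. The potential $\pi$ then solves $L\pi=\mathbf 1_u-\mathbf 1_v$, and $R_{\eff}(u,v)=\pi(u)-\pi(v)$. This agrees with the normalization used above: scaling the potential so that the difference equals $1$ makes the current out of the source equal $1/R_{\eff}$.

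To solve the system, ground the vertex $v$, i.e. set $\pi(v)=0$ and delete the row and column of $v$. This gives $L_v\pi'=\mathbf 1_u$, where $L_v$ is the reduced Laplacian; $L_v$ is invertible since $G$ is connected. By Cramer's rule,
$$\pi(u)=(L_v^{-1})_{uu}=\frac{\det L_{\{u,v\}}}{\det L_v},$$
where $L_{\{u,v\}}$ is $L$ with both rows and columns $u,v$ removed. By the Matrix-Tree Theorem, $\det L_v=T(G)$.

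The key step is to identify $\det L_{\{u,v\}}$ with $T(G_{u,v})$. Note that $G_{u,v}$ may have loops and multiple edges; loops are irrelevant for spanning trees. Its Laplacian, with the merged vertex $w$ grounded, has rows and columns indexed by $V\setminus\{u,v\}$. Each entry there is unchanged from $L$: degrees of other vertices are preserved, and edges to $u$ or $v$ become edges to $w$, which only affects the deleted row and column. Hence the reduced Laplacian of $G_{u,v}$ at $w$ equals $L_{\{u,v\}}$, and the Matrix-Tree Theorem for multigraphs gives $\det L_{\{u,v\}}=T(G_{u,v})$. This identification is the only point needing care, and the argument just given handles it.

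An alternative combinatorial route would avoid determinants by using the all-minors Matrix-Tree Theorem: $\det L_{\{u,v\}}$ counts spanning forests with two components, one containing $u$ and one containing $v$. These forests are in bijection with spanning trees of $G_{u,v}$. The determinantal argument above already suffices, however.

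For the second claim, let $e=(u,v)\in E(G)$. Spanning trees of $G$ containing $e$ correspond bijectively to spanning trees of $G/e$, obtained by contracting $e$. The graph $G_{u,v}$ is $G/e$ together with one extra loop coming from $e$, and loops do not change the spanning tree count. So $T(G_{u,v})$ equals the number of spanning trees containing $e$, and therefore $R_{\eff}(u,v)=T(G_{u,v})/T(G)$ is the probability that a uniform random spanning tree contains $e$.
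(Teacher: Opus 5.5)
Your proof is correct. Grounding $v$ gives $R_{\eff}(u,v)=\pi(u)=(L_v^{-1})_{uu}=\det L_{\{u,v\}}/\det L_v$ by Cramer's rule. Identifying $L_{\{u,v\}}$ with the reduced Laplacian of $G_{u,v}$ at the merged vertex is also sound: no edge at a vertex outside $\{u,v\}$ becomes a loop, so degrees and multiplicities on $V\setminus\{u,v\}$ are unchanged, and loops are invisible both to the Laplacian and to spanning trees. The edge case, with $G_{u,v}$ equal to $G/e$ plus one loop, is fine. For $n=2$, read $\det L_{\{u,v\}}$ as the empty determinant $1$.

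The paper does not prove this lemma. It quotes it as a classical fact of Kirchhoff and points to Bollob\'as's book for background. So there is no route to compare against, and your Cramer's-rule argument is the shortest self-contained derivation given the Matrix-Tree Theorem.

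The forest-counting alternative you mention is worth making explicit, because that is the form in which the paper later uses the lemma. In \eqref{product-average} it needs $T(G)R_{\eff}(u,v)$ to equal the number of two-component spanning forests with $u$ and $v$ in different components. Your proof delivers this once you add the bijection between spanning trees of $G_{u,v}$ and such forests.

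One direction: pull back the $n-2$ edges of a spanning tree of $G_{u,v}$. None of them joins $u$ and $v$, since that edge would be a loop. A cycle or a $u$--$v$ path among the pulled-back edges would map to a loopless closed trail at the merged vertex, which contains a cycle. So the preimage is acyclic with $u$ and $v$ in different components, hence a forest with exactly two components.

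Other direction: a separating two-component forest maps to $n-2$ edges forming a connected spanning subgraph of $G_{u,v}$, because both components pass through the merged vertex. That subgraph is therefore a spanning tree.
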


\begin{Lemma}[Rayleigh's monotonicity]
Adding edges to a graph cannot increase $R_{\eff}(u,v)$.
\end{Lemma}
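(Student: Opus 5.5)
We prove Rayleigh's monotonicity through the combinatorial description of Lemma~\ref{effective-combinatorial}. The claim reduces to the case of adding a single edge $f$ to $G$, since adding several edges is an iteration of this.

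We may assume $u\neq v$. By Lemma~\ref{effective-combinatorial},
\[R_{\eff}^{G}(u,v)=\frac{T(G_{u,v})}{T(G)}.\]
For an edge $f\notin E(G)$, deletion–contraction gives
\[T(G+f)=T(G)+T((G+f)/f),\qquad T((G+f)_{u,v})=T(G_{u,v})+T((G+f)_{u,v}/f).\]
We then need
\[\frac{T(G_{u,v})+T(G_{u,v}/f)}{T(G)+T(G/f)}\leqslant \frac{T(G_{u,v})}{T(G)}.\]
This is equivalent to
\[T(G_{u,v}/f)\,T(G)\leqslant T(G_{u,v})\,T(G/f).\]
If $f=uv$, the left side vanishes, because $f$ becomes a loop in $G_{u,v}$, and we are done.

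Otherwise, translate back using Lemma~\ref{effective-combinatorial}, applied in $G$ and in $G/f$. Let $f=ab$. The inequality becomes
\[T(G/\{u{=}v,a{=}b\})\,T(G)\leqslant T(G/\{u{=}v\})\,T(G/\{a{=}b\}).\]
This is a negative-correlation statement. Consider a uniform random spanning forest of $G$ with two components, one containing $u$ and the other $v$, and ask whether it also separates $a$ from $b$. Equivalently, the inequality is the Rayleigh inequality for graphic matroids,
\[T\cdot T_{u,v;a,b}\leqslant T_{u,v}\,T_{a,b}.\]
To prove it, add auxiliary edges $e=uv$ and $f=ab$ to $G$. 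It then reads
\[\P(e,f\in\mathcal T)\leqslant \P(e\in\mathcal T)\,\P(f\in\mathcal T)\]
for the uniform spanning tree $\mathcal T$ of $G+e+f$, up to normalisation. This is the classical negative correlation of edges in uniform spanning trees.

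The negative correlation is the main obstacle. I would prove it via the transfer-current theorem, which gives
\[\P(e,f\in\mathcal T)=\P(e)\P(f)-Y(e,f)^2.\]
If that is too heavy, I would fall back on the standard physical alternative. Thomson's principle says that $R_{\eff}(u,v)$ is the minimum energy of a unit $u$–$v$ flow. Any flow in $G$ is also a flow in $G+f$, carrying zero current on $f$, so the minimum over the larger set cannot be bigger.

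In fact I would present the Thomson-principle argument as the main proof, since it is only two lines, and keep the combinatorial translation as a remark.
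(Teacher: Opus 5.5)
Your main argument is correct. The paper gives no proof of this lemma: it is quoted as a standard fact, with Bollob\'as's book as the reference for electrical networks. The Thomson-principle argument you settle on (extend every unit $u$--$v$ flow of $G$ by zero on the new edge, so the minimal energy over the enlarged family cannot increase) is the standard textbook proof. Citing Thomson's principle is at the same level as the paper's own citation. The only link you need to the paper's potential-based definition is that $\theta(x\to y)=R_{\eff}(s,t)\,(\pi(y)-\pi(x))$ is a unit $s$--$t$ flow of energy $R_{\eff}(s,t)$. The same argument also shows that $R_{\eff}$ is monotone in each individual resistance. So it covers identifying vertices too, which is the form the Remark after Lemma~\ref{cut-resistance} actually needs.

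Keep the combinatorial part strictly as a remark: as written it is a reformulation, not a proof.
\begin{itemize}
\item \emph{It restates the claim.} By Lemma~\ref{effective-combinatorial}, your final inequality $T(G)\,T(G/\{u{=}v,a{=}b\})\leqslant T(G/\{u{=}v\})\,T(G/\{a{=}b\})$ says exactly $R^{G/\{a=b\}}_{\eff}(u,v)\leqslant R^{G}_{\eff}(u,v)$. That is Rayleigh monotonicity again, in its shorting form. Your deletion--contraction step shows the two forms are equivalent, since $R^{G+ab}_{\eff}(u,v)$ is the mediant of $R^{G}_{\eff}(u,v)$ and $R^{G/\{a=b\}}_{\eff}(u,v)$.
\item \emph{Circularity risk.} For the same reason, negative correlation of two edges of a uniform spanning tree is commonly \emph{deduced} from Rayleigh monotonicity, so quoting it would be circular. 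The transfer-current theorem is a legitimate independent input only because it is proved by linear algebra. Your formula $\PP(e,f\in\mathcal T)=\PP(e\in\mathcal T)\,\PP(f\in\mathcal T)-Y(e,f)^2$ is right for unit resistances, where $Y$ is symmetric.
\item \emph{The heuristic sentence is wrong.} Drop the remark about a uniform two-component forest ``also separating $a$ from $b$''. $T(G/\{u{=}v,a{=}b\})$ counts three-component forests $F$ of $G$ for which $F+uv+ab$ is a spanning tree. These are not a subfamily of the two-component forests, so that sentence does not describe your inequality.
\end{itemize}
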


\begin{Lemma} \label{sum-resistances} If $G$ is a connected graph on $n$ vertices, then
$$\sum_{e\in E}R_{\eff}(e^-,e^+)=n-1.$$
\end{Lemma}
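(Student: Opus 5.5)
The plan is to combine Lemma~\ref{effective-combinatorial} with linearity of expectation over a uniformly random spanning tree. Let $\mathbf{T}$ be a uniform random spanning tree of $G$. For an edge $e=(e^-,e^+)$, the second part of Lemma~\ref{effective-combinatorial} says that $R_{\eff}(e^-,e^+)=\Pr(e\in \mathbf{T})$. Summing over all edges and exchanging sum and expectation gives
$$\sum_{e\in E}R_{\eff}(e^-,e^+)=\sum_{e\in E}\Pr(e\in\mathbf{T})=\mathbb{E}\,|E(\mathbf{T})|=n-1,$$
because every spanning tree of a connected graph on $n$ vertices has exactly $n-1$ edges.

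The only step that needs any care is the identity $R_{\eff}(u,v)=\Pr((u,v)\in\mathbf{T})$ for an edge $(u,v)$. It is stated in Lemma~\ref{effective-combinatorial}, so I will simply invoke it. To keep the argument self-contained from the first formula of that lemma, I would also note the bijection behind it. Suppose $(u,v)\in E$. Contracting the edge $(u,v)$ and deleting the loops it creates gives the graph $G_{u,v}$ with the parallel copies kept. Spanning trees of $G$ containing $(u,v)$ are then in bijection with spanning trees of $G_{u,v}$, via $T\mapsto T/(u,v)$. Hence the number of spanning trees of $G$ containing $(u,v)$ equals $T(G_{u,v})$, and dividing by $T(G)$ gives the probability.

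Nothing here is a genuine obstacle. One bookkeeping point is that if $G$ had parallel edges, each copy would be summed separately and the argument would be unchanged. Since $G$ is simple, that issue does not even arise.

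Alternatively, this is Foster's theorem, and one could prove it spectrally. Write $R_{\eff}(u,v)=(\chi_u-\chi_v)^{T}L^{+}(\chi_u-\chi_v)$, where $L$ is the Laplacian. Then
$$\sum_e R_{\eff}(e^-,e^+)=\operatorname{tr}(L^{+}L)=\operatorname{rank}L=n-1.$$
The combinatorial argument above is the one I would present, since it uses only the lemma already stated in the paper.
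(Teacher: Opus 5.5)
Your proposal is correct and is essentially the paper's proof: identify $R_{\eff}(e^-,e^+)$ with $\mathbb{P}(e\in\mathbf{T})$ via Lemma~\ref{effective-combinatorial}, then use linearity of expectation and the fact that a spanning tree has $n-1$ edges. Your contraction bijection and the alternative trace argument $\operatorname{tr}(L^{+}L)=n-1$ (Foster's theorem) are both fine.
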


\begin{proof} Let $\textbf{T}$ be a uniformly chosen random spanning tree. Then we have
$$\sum_{e\in E}R_{\eff}(e^-,e^+)=\sum_{e\in E}\mathbb{P}(e\in \textbf{T})=\mathbb{E}[|\textbf{T}|]=n-1.$$
\end{proof}

\begin{Lemma} \label{effective-resistance-degree-d}
Let $G$ be a graph on $n$ vertices and suppose that the vertex $v$ has degree $d$. If $u$ is a neighbor of $v$, then
$$R_{\eff}(v,u)\geqslant \frac{d+n-1}{dn}.$$
\end{Lemma}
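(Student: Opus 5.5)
We need to prove that if $v$ has degree $d$ and $u$ is a neighbor of $v$, then $R_{\eff}(v,u)\geqslant \frac{d+n-1}{dn}$. The plan is to use Rayleigh's monotonicity to reduce to the extremal graph, and then compute the resistance there exactly.

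First, adding edges cannot increase $R_{\eff}(v,u)$. So we may add every missing edge among the $n-1$ vertices other than $v$, keeping the edges at $v$ unchanged. The resulting graph $G'$ has $V\setminus\{v\}$ spanning a clique $K_{n-1}$, and $v$ is joined to a set $S$ of $d$ vertices, with $u\in S$. It then suffices to show $R_{\eff}^{G'}(v,u)= \frac{d+n-1}{dn}$, or at least that it is no smaller.

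To compute this, send a unit current from $v$ to $u$ and exploit symmetry.
\begin{itemize}
\item The vertices in $S\setminus\{u\}$ are all equivalent, so they share a common potential $a$.
\item The vertices outside $S\cup\{v\}$ are all equivalent, so they share a common potential $b$.
\item Set $\pi(u)=0$ and $\pi(v)=R$.
\end{itemize}
Kirchhoff's law at a vertex $w\in S\setminus\{u\}$ reads
$$(R-a)+(0-a)+(n-1-d)(b-a)=0.$$
At a vertex $z$ outside $S\cup\{v\}$ it reads
$$(0-b)+(d-1)(a-b)=0.$$
Finally, unit current leaving $v$ gives
$$(R-0)+(d-1)(R-a)=1.$$
These are three linear equations in $a$, $b$ and $R$.

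Solving them:
\begin{itemize}
\item The second equation gives $b=\frac{(d-1)a}{d}$.
\item Substituting into the first gives $R=2a+(n-1-d)\frac{a}{d}=a\cdot\frac{n-1+d}{d}$.
\item The third equation becomes $dR-(d-1)a=1$, that is, $a\bigl(n-1+d-(d-1)\bigr)=1$, so $a=\frac1n$.
\end{itemize}
Hence $R=\frac{d+n-1}{dn}$, as claimed.

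As sanity checks, $d=n-1$ gives $\frac{2}{n}$, which is the correct value for $K_n$, and $d=1$ gives $1$, which is correct for a bridge. The main step to verify is the justification of the symmetry reduction. This holds because the potential is unique and the graph automorphisms fixing $u$ and $v$ permute each class, so the unique potential must be constant on each class.
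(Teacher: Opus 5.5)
Your proof is correct and is essentially the paper's argument: Rayleigh monotonicity reduces to $K_{n-1}$ with $v$ joined to $d$ of its vertices, and the two-class symmetric potential system is then solved exactly, giving $\frac{d+n-1}{dn}$. The differences are cosmetic: you normalize by unit current rather than unit voltage. You also treat $d=1$ and $d=n-1$ only as sanity checks, whereas the paper handles them separately; in those cases one of your Kirchhoff equations refers to an empty class, but this does not affect the value of $R$.
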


\begin{proof} First, let us construct a graph $G'$ by adding each edge to $G$ that is non-incident to $v$. Then the resulting graph $G'$ is a $K_{n-1}$ with the extra vertex $v$ connected to $d$ elements of $K_{n-1}$. Since adding edges to the graph can only decrease the effective resistance, it is enough to compute $R^{G'}_{\eff}(u,v)$ in the resulting graph $G'$. Next we show that this effective resistance is exactly $\frac{d+n-1}{dn}$.
Let $w_1$ be a vertex of degree $n-1$ and $w_2$ be a vertex of degree $n-2$ different from $u$ and $v$. If $2\leqslant d\leqslant n-2$ these vertices exist. If $d=1$ or $d=n-1$ the claim is still true: if $d=1$, the incident edge is a bridge and $R_{\eff}(u,v)=1$;
if $d=n-1$, then the graph $G'$ is $K_n$ and $R_{\mathrm{eff}}(u,v)=\frac{2}{n}$. For $2\leqslant d\leqslant n-2$, we can use vertex symmetry; the potential equations are the following:
$$\pi(v)=0,\ \ \ \pi(u)=1,$$
$$(n-1)\pi(w_1)=\pi(v)+\pi(u)+(d-2)\pi(w_1)+(n-d-1)\pi(w_2)=1+(d-2)\pi(w_1)+(n-d-1)\pi(w_2)$$
$$(n-2)\pi(w_2)=\pi(u)+(d-1)\pi(w_1)+(n-d-2)\pi(w_2)=1+(d-1)\pi(w_1)+(n-d-2)\pi(w_2).$$
The unique solution of this system of equations is
$$\pi(w_1)=\frac{n-1}{d+n-1},\  \ \ \pi(w_2)=\frac{n}{d+n-1}$$
and
$$R^{G'}_{\eff}(u,v)=\left(\pi(u)+(d-1)\pi(w_1)\right)^{-1}=\frac{d+n-1}{dn}.$$
Hence $R^G_{\eff}(u,v)\geqslant \frac{d+n-1}{dn}$.

\end{proof}

\begin{Lemma}[Richman, Shokrieh and Wu \cite{richman2023counting}] \label{RSW-1-lower-bound}
Let $G$ be a connected graph with $n$ vertices and $m$ edges. Then
$$\mathrm{RSW}_1\geqslant \frac{(n-1)^2}{4m}.$$
\end{Lemma}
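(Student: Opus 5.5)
The bound is a direct consequence of Lemma~\ref{sum-resistances} combined with the Cauchy--Schwarz inequality. By definition,
$$\mathrm{RSW}_1=\frac{1}{4}\sum_{e\in E}R_{\eff}(e^-,e^+)^2 .$$
The sum runs over the $m$ edges of $G$, so it is a sum of $m$ squares. The strategy is to bound a sum of squares from below by the square of the corresponding sum of first powers.

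Applying the Cauchy--Schwarz inequality (equivalently, the power-mean inequality) to the vectors $(R_{\eff}(e^-,e^+))_{e\in E}$ and $(1)_{e\in E}$, we get
$$\left(\sum_{e\in E}R_{\eff}(e^-,e^+)\right)^2\leqslant m\sum_{e\in E}R_{\eff}(e^-,e^+)^2 .$$
Since $G$ is connected, Lemma~\ref{sum-resistances} gives $\sum_{e\in E}R_{\eff}(e^-,e^+)=n-1$. Substituting this, we obtain
$$\sum_{e\in E}R_{\eff}(e^-,e^+)^2\geqslant \frac{(n-1)^2}{m}.$$
Multiplying by $\frac14$ yields $\mathrm{RSW}_1\geqslant \frac{(n-1)^2}{4m}$, which is the claim. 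Here $m\geqslant 1$ whenever $n\geqslant 2$. The case $n=1$ is trivial, since both sides are zero (with the convention that the right-hand side is $0$ when $m=0$).

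There is no genuine obstacle here; the only nontrivial input is the identity $\sum_{e}R_{\eff}(e^-,e^+)=n-1$. That identity rests on the spanning-tree interpretation of $R_{\eff}$ given in Lemma~\ref{effective-combinatorial}, which is already established above. Equality holds exactly when all edge resistances are equal to $\frac{n-1}{m}$, for instance for edge-transitive graphs such as $K_n$. This is consistent with the bound being asymptotically tight for complete graphs.
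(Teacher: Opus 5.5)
Your proof is correct and is the same argument the paper gives: Cauchy--Schwarz on the $m$ edge resistances, combined with the identity $\sum_{e\in E}R_{\eff}(e^-,e^+)=n-1$ from Lemma~\ref{sum-resistances}. Your equality condition (all edge resistances equal to $\frac{n-1}{m}$, as for $K_n$) is also how the paper gets equality for $K_n$ in Lemma~\ref{2-forests-complete-graph}.
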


\begin{proof} By the Cauchy--Schwarz inequality we have
$$\mathrm{RSW}_1=\frac{1}{4}\sum_{e\in E}R_{\eff}(e^-,e^+)^2\geqslant \frac{1}{4m}\left(\sum_{e\in E}R_{\eff}(e^-,e^+)\right)^2=\frac{(n-1)^2}{4m}.$$
\end{proof}

\begin{Lemma} \label{RSW-1-RSW-2}
Let $G$ be a connected graph on $n$ vertices. Then
$$\mathrm{RSW}_2\geqslant \frac{6}{n}\mathrm{RSW}_1.$$
If $G=K_n$, then this inequality holds with equality. 
\end{Lemma}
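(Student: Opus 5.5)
The plan is to exploit the fact that $\mathrm{RSW}_2$ does not depend on the choice of the vertex $q$, and then average over all $q$. Indeed, in the RSW-formula (Theorem~\ref{thm: RSW-formula}) the left-hand side $\frac{F_2(G)}{T(G)}$ and the term $\mathrm{RSW}_1$ do not involve $q$, while $q$ is arbitrary. Hence $\mathrm{RSW}_2=\frac{3}{4}\sum_{e\in E}(R_{\eff}(q,e^-)-R_{\eff}(q,e^+))^2$ takes the same value for every $q\in V(G)$. Averaging over the $n$ choices of $q$ and exchanging the order of summation gives
$$\mathrm{RSW}_2=\frac{3}{4n}\sum_{e\in E}\sum_{q\in V(G)}\bigl(R_{\eff}(q,e^-)-R_{\eff}(q,e^+)\bigr)^2 .$$

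Next, fix an edge $e=(u,v)$ with $u=e^-$ and $v=e^+$, and keep only the two terms of the inner sum with $q=u$ and $q=v$; every other term is a square, hence nonnegative. Since $R_{\eff}(u,u)=0$ and $R_{\eff}$ is symmetric, these two terms equal $R_{\eff}(u,v)^2$ each. So the inner sum is at least $2R_{\eff}(e^-,e^+)^2$. Substituting this gives
$$\mathrm{RSW}_2\geqslant \frac{3}{4n}\cdot 2\sum_{e\in E}R_{\eff}(e^-,e^+)^2=\frac{6}{n}\cdot\frac{1}{4}\sum_{e\in E}R_{\eff}(e^-,e^+)^2=\frac{6}{n}\mathrm{RSW}_1 .$$

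For the equality case $G=K_n$, the only inequality used was discarding the terms with $q\notin\{u,v\}$. In $K_n$, for such $q$ the automorphism swapping $u$ and $v$ and fixing $q$ shows $R_{\eff}(q,u)=R_{\eff}(q,v)$, so each discarded term vanishes and equality holds.

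There is no real obstacle in this argument. The only point needing care is to justify explicitly that $\mathrm{RSW}_2$ is independent of $q$, which follows directly from the RSW-formula holding for every $q$.
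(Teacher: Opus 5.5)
Your proof is correct and takes essentially the same route as the paper. Both use that $\mathrm{RSW}_2$ is independent of $q$, average over all $q$, and keep only the terms where $q$ is an endpoint of the edge, so that each edge contributes $2R_{\eff}(e^-,e^+)^2$; equality for $K_n$ follows because the discarded terms vanish by symmetry (your explicit derivation of $q$-independence from the RSW-formula is a small welcome addition).
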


\begin{proof} Let
$$\mathrm{RSW}_2(q)=\frac{3}{4}\sum_{e\in E}(R_{\eff}(q,e^-)-R_{\eff}(q,e^+))^2\geqslant \frac{3}{4}\sum_{e\in E \atop q\in e}R_{\eff}(e^-,e^+)^2.$$
The expression $\mathrm{RSW}_2(q)$ is independent of $q$.
Note that if $G$ is the complete graph $K_n$, then $R_{\eff}(q,u)=R_{\eff}(q,v)$ for $u,v\neq q$ by symmetry, so we have equality in this bound.

Now, let us average it for all $q\in V$. Then
$$\mathrm{RSW}_2=\frac{1}{n}\sum_{q\in V}\mathrm{RSW}_2(q)\geqslant \frac{1}{n}\sum_{q\in V}\frac{3}{4}\sum_{e\in E \atop q\in e}R_{\eff}(e^-,e^+)^2=\frac{6}{n}\cdot \frac{1}{4}\sum_{e\in E}R_{\eff}(e^-,e^+)^2=\frac{6}{n}\mathrm{RSW}_1.$$
\end{proof}

\begin{Lemma} \label{RSW-2-lower-bound}
Let $G$ be a connected graph on $n$ vertices with a vertex of degree $d$. Then
$$\mathrm{RSW}_2\geqslant \frac{3}{4d}\left(1+\frac{d-1}{n}\right)^2.$$
\end{Lemma}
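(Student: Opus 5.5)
The plan is to exploit the freedom in the choice of the vertex $q$ in the RSW-formula (Theorem~\ref{thm: RSW-formula}) and to take $q$ to be the vertex of degree $d$. Then I would keep only the edges incident to that vertex and apply Lemma~\ref{effective-resistance-degree-d} to each of them.

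\emph{Step 1: $\mathrm{RSW}_2$ does not depend on $q$.} In the RSW-formula, the left-hand side $F_2(G)/T(G)$ and the term $\mathrm{RSW}_1$ do not depend on $q$. Hence $\mathrm{RSW}_2=\mathrm{RSW}_2(q)$ is the same for every vertex $q$, as already observed in the proof of Lemma~\ref{RSW-1-RSW-2}. So we may set $q=v$, where $v$ is the vertex of degree $d$.

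\emph{Step 2: restrict to the edges at $v$.} All terms in
$$\frac{3}{4}\sum_{e\in E}\left(R_{\eff}(v,e^-)-R_{\eff}(v,e^+)\right)^2$$
are nonnegative, so we may discard every edge not incident to $v$. For an edge $e=(v,u)$ we have $R_{\eff}(v,v)=0$, so its term equals $R_{\eff}(v,u)^2$. This gives
$$\mathrm{RSW}_2\geqslant \frac{3}{4}\sum_{u\in N_G(v)}R_{\eff}(v,u)^2.$$

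\emph{Step 3: bound each resistance.} By Lemma~\ref{effective-resistance-degree-d}, every neighbor $u$ of $v$ satisfies $R_{\eff}(v,u)\geqslant \frac{d+n-1}{dn}$. There are exactly $d$ such neighbors, so
$$\mathrm{RSW}_2\geqslant \frac{3}{4}\cdot d\cdot\left(\frac{d+n-1}{dn}\right)^2=\frac{3}{4d}\left(1+\frac{d-1}{n}\right)^2,$$
which is the claimed bound.

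There is no real obstacle. The only point needing care is Step 1, the $q$-independence of $\mathrm{RSW}_2$, and this follows directly from the form of the RSW-formula.
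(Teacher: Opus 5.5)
Your proposal is correct and follows essentially the same route as the paper's proof. Both take $q=v$ in the RSW-formula, keep only the $d$ edges at $v$ (where $R_{\eff}(v,v)=0$), and apply Lemma~\ref{effective-resistance-degree-d} to each of them; your Step~1 just spells out the $q$-independence that the paper takes for granted.
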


\begin{proof}
Let us choose $q=v$ in the definition of $\mathrm{RSW}_2$. Then for each edge incident to $v$ we have
$$R_{\eff}(u,v)-R_{\eff}(v,v)=R_{\eff}(u,v)\geqslant \frac{d+n-1}{dn}$$
by Lemma~\ref{effective-resistance-degree-d}. The claim immediately follows.
\end{proof}

\begin{Lemma} \label{2-forests-complete-graph} Let $G$ be a simple connected graph on $n\geq 2$ vertices. Then
$$\frac{F_2(G)}{T(G)}\geqslant \frac{F_2(K_n)}{T(K_n)}=\left(1+\frac{6}{n}\right)\frac{n-1}{2n}>\frac{1}{2}.$$
\end{Lemma}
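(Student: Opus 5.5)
The plan is to chain the lemmas just established. By Theorem~\ref{thm: RSW-formula}, $\frac{F_2(G)}{T(G)}=\mathrm{RSW}_1+\mathrm{RSW}_2$. Lemma~\ref{RSW-1-RSW-2} gives $\mathrm{RSW}_2\geqslant \frac{6}{n}\mathrm{RSW}_1$, so
$$\frac{F_2(G)}{T(G)}\geqslant \left(1+\frac{6}{n}\right)\mathrm{RSW}_1 .$$
Lemma~\ref{RSW-1-lower-bound} then gives $\mathrm{RSW}_1\geqslant \frac{(n-1)^2}{4m}$. Since $G$ is simple, $m\leqslant \binom{n}{2}=\frac{n(n-1)}{2}$, and therefore
$$\frac{F_2(G)}{T(G)}\geqslant \left(1+\frac{6}{n}\right)\frac{(n-1)^2}{2n(n-1)}=\left(1+\frac{6}{n}\right)\frac{n-1}{2n}.$$

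Next I check that this lower bound is exactly the value for $K_n$. In $K_n$, every edge has effective resistance $\frac{2}{n}$, by symmetry together with Lemma~\ref{sum-resistances}. Hence the Cauchy--Schwarz step in Lemma~\ref{RSW-1-lower-bound} is an equality, and $m=\binom n2$. Also, Lemma~\ref{RSW-1-RSW-2} states that its inequality is an equality for $K_n$. So every inequality in the chain is tight for $K_n$. Directly,
$$\mathrm{RSW}_1(K_n)=\frac14\binom n2\frac{4}{n^2}=\frac{n-1}{2n},$$
and so $\frac{F_2(K_n)}{T(K_n)}=\left(1+\frac6n\right)\frac{n-1}{2n}$. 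The case $n=2$ fits as well: there $F_2=T=1$ and the formula gives $4\cdot\frac14=1$.

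Finally, the strict inequality $\left(1+\frac6n\right)\frac{n-1}{2n}>\frac12$ is equivalent to $(n+6)(n-1)>n^2$, that is, to $5n>6$. This holds for every $n\geqslant 2$.

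All the steps are routine given the earlier lemmas. The only point needing care is verifying equality for $K_n$, and for that the symmetry argument is enough.
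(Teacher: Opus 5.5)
Your proposal is correct and follows essentially the same route as the paper: the same chain (RSW-formula, Lemma~\ref{RSW-1-RSW-2}, Lemma~\ref{RSW-1-lower-bound}, $m\leqslant\binom{n}{2}$), followed by the observation that every step is tight for $K_n$. You additionally carry out the explicit computation of $\mathrm{RSW}_1(K_n)$, the $n=2$ sanity check, and the algebra behind the final strict inequality $\left(1+\frac{6}{n}\right)\frac{n-1}{2n}>\frac12$, none of which the paper writes out.
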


\begin{proof} By combining Lemmas~\ref{RSW-1-lower-bound} and \ref{RSW-1-RSW-2} we have 
\begin{align*}
\frac{F_2(G)}{T(G)}&=\mathrm{RSW}_1+\mathrm{RSW}_2  && (\text{Theorem~\ref{thm: RSW-formula}})\\
&\geqslant \left(1+\frac{6}{n}\right)\mathrm{RSW}_1  && (\text{Lemma~\ref{RSW-1-RSW-2}}) \\
&\geqslant \left(1+\frac{6}{n}\right)\frac{(n-1)^2}{4m} && (\text{Lemma~\ref{RSW-1-lower-bound}})\\
&\geqslant \left(1+\frac{6}{n}\right)\frac{(n-1)^2}{4\binom{n}{2}} && \left(\binom{n}{2}\geqslant m\right)\\
&=\left(1+\frac{6}{n}\right)\frac{n-1}{2n}.
\end{align*}
Note that for the complete graph $K_n$ we have equality in each step, so
$$\frac{F_2(K_n)}{T(K_n)}=\left(1+\frac{6}{n}\right)\frac{n-1}{2n}.$$
\end{proof}

\begin{Lemma} \label{cut-resistance}
Let $G$ be a connected graph and let $C\subseteq E$ be a cut. Then
$$\sum_{e\in C}R_{\eff}(e^-,e^+)\geqslant 1.$$
\end{Lemma}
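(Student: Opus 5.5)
The plan is to read the sum combinatorially through Lemma~\ref{effective-combinatorial}: for an edge $e$, $R_{\eff}(e^-,e^+)=\mathbb{P}(e\in \textbf{T})$, where $\textbf{T}$ is a uniformly random spanning tree of $G$. Then
$$\sum_{e\in C}R_{\eff}(e^-,e^+)=\sum_{e\in C}\mathbb{P}(e\in\textbf{T})=\mathbb{E}\bigl[|\textbf{T}\cap C|\bigr]$$
by linearity of expectation, in the same way as in the proof of Lemma~\ref{sum-resistances}.

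It remains to show that $|T\cap C|\geqslant 1$ for every spanning tree $T$; averaging over $\textbf{T}$ then gives the claim. Let $C$ be the cut between a nonempty proper vertex set $S$ and its complement $V\setminus S$. Pick $s\in S$ and $t\in V\setminus S$. Since $T$ is spanning and connected, it contains an $s$--$t$ path. Walking along this path, we start in $S$ and end outside $S$, so some edge of the path has one endpoint in $S$ and the other in $V\setminus S$. That edge lies in $C$, hence $|T\cap C|\geqslant 1$.

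There is no real obstacle here. The only point to fix is the meaning of ``cut''. If it means an edge set whose removal disconnects $G$, then it contains all edges between some component of the remaining graph and the rest. The argument above applies to that subset, and the remaining terms are nonnegative, so the bound still holds.
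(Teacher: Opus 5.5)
Your proposal is correct and follows the paper's proof: rewrite each $R_{\eff}(e^-,e^+)$ as $\mathbb{P}(e\in\textbf{T})$ via Lemma~\ref{effective-combinatorial}, sum to get $\mathbb{E}[|C\cap\textbf{T}|]$, and use that every spanning tree meets every cut. Your path argument for that last fact, and your remark on reducing a disconnecting edge set to a full cut $E(K,V\setminus K)$, are both valid. They spell out details that the paper simply asserts.
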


\begin{proof}
Let $\textbf{T}$ be a uniformly chosen random spanning tree. Then
$$\sum_{e\in C}R_{\eff}(e^-,e^+)=\sum_{e\in C}\mathbb{P}(e\in \textbf{T})=\mathbb{E}[|C\cap \textbf{T}|]\geqslant 1$$
since every spanning tree contains at least one edge from a cut.
\end{proof}

\begin{Rem} By Rayleigh's monotonicity, it is also true that for each edge $e\in C$ we have $R_{\eff}(e^-,e^+)\geqslant \frac{1}{|C|}.$
\end{Rem}

\begin{Lemma} \label{edge-number-estimate}
Let $G$ be a simple graph on $n$ vertices with smallest cut of size $\delta$ and smallest degree $\delta_v$. Suppose that $\delta<\delta_v$. Then $\delta\leqslant \frac{n}{2}-2$, and $G$ has at most  $\binom{n}{2}-(\delta+1)(n-\delta-1)+\delta$ edges.
\end{Lemma}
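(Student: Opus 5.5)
Take a minimum cut $C=E(S,V\setminus S)$ with $|C|=\delta$, and choose the side $S$ with $|S|=s\leqslant n/2$. The plan is to show that the hypothesis $\delta<\delta_v$ forces $s$ to be large, and then count missing edges across the cut.

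\textbf{Lower bound on $s$.} Every vertex of $S$ has degree at least $\delta_v\geqslant \delta+1$. At most $s-1$ of its neighbours lie inside $S$, so double counting gives
$$s(\delta+1)\leqslant \sum_{u\in S}\deg(u)\leqslant s(s-1)+\delta.$$
This rearranges to $s(\delta+2-s)\leqslant \delta$. Now suppose $2\leqslant s\leqslant \delta+1$. The function $s\mapsto s(\delta+2-s)$ is concave, so on this range it is minimized at an endpoint. At both endpoints it equals $2\delta>\delta$, which is a contradiction. The case $s=1$ is also impossible, since then $\delta\geqslant \delta_v$. Hence $s\geqslant \delta+2$.

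Since $s\leqslant n/2$, this gives $\delta\leqslant s-2\leqslant \frac{n}{2}-2$, which is the first claim.

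\textbf{Edge count.} All pairs between $S$ and $V\setminus S$ except the $\delta$ cut edges are non-edges. Therefore
$$|E|\leqslant \binom{n}{2}-s(n-s)+\delta.$$
On the range $\delta+1\leqslant s\leqslant n/2$, the product $s(n-s)$ is increasing in $s$. We have $s\geqslant \delta+2\geqslant \delta+1$, so $s(n-s)\geqslant (\delta+1)(n-\delta-1)$. This yields the stated bound $|E|\leqslant \binom{n}{2}-(\delta+1)(n-\delta-1)+\delta$.

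The only mildly delicate step is ruling out intermediate values of $s$ via the concavity/endpoint argument. Everything else is direct counting.
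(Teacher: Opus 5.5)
Your proof is correct and is essentially the paper's argument: bound the degree sum over the smaller side $S$ of a minimum cut, then count the missing pairs across the cut. The one difference is that you feed $\deg(u)\geqslant\delta+1$ directly into the summed inequality, which excludes $s=\delta+1$ in one step, whereas the paper first derives $\delta<k$ and then rules out $k=\delta+1$ separately by exhibiting a vertex of $S$ with no neighbour across the cut. There is one minor slip: at the endpoint $s=\delta+1$ the quantity $s(\delta+2-s)$ equals $\delta+1$, not $2\delta$, but this still exceeds $\delta$, so your contradiction stands.
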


\begin{proof}
Let $E(S,V\setminus S)$ be a cut of size $\delta$ where $|S|=k\leqslant \frac{n}{2}$. Since $\delta<\delta_v$ we have $k\geqslant 2$. Since $|E(S,V\setminus S)|=\delta$, there must be a vertex $u\in S$ such that 
$$d(u)\leqslant |S|-1+\frac{\delta}{|S|}=k-1+\frac{\delta}{k}.$$
Note that $d(u)\geqslant \delta_v>\delta$, thus
$$\delta<k-1+\frac{\delta}{k}.$$
This is equivalent to $\delta<k$, that is, $\delta+1\leqslant k$.
 If $k=\delta+1$, then there would be a vertex $u\in S$ without neighbor in $V\setminus S$ and its degree would be at most $|S|-1=\delta<\delta_v$ contradicting the definition of $\delta_v$. So $\frac{n}{2}\geqslant k\geqslant \delta+2$ proving the first statement. Using only $\frac{n}{2}\geqslant k\geqslant \delta+1$ we also have
$$e(G)\leqslant \binom{n}{2}-k(n-k)+\delta\leqslant \binom{n}{2}-(\delta+1)(n-\delta-1)+\delta.$$
\end{proof}

We will also need some explicit upper bound for $Q_n$.

\begin{Lemma} \label{Q_n-upper-bound}
For every $n\geqslant 1$ we have
$$Q_n\leqslant \exp\left(\frac{1}{2}+\frac{5}{2n}-\frac{3}{2n^2}\right).$$
\end{Lemma}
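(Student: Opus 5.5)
The plan is to reduce the bound to an estimate on the ratios $r_s:=F_s(K_n)/T_n$, where $F_s$ counts spanning forests with $s$ components, so that $Q_n=\sum_{s=1}^n r_s$ and $r_1=1$. The target exponent is suggestive. By Lemma~\ref{2-forests-complete-graph},
$$r_2=\left(1+\frac{6}{n}\right)\frac{n-1}{2n}=\frac12+\frac{5}{2n}-\frac{3}{n^2},$$
which is at most $c_n:=\frac12+\frac{5}{2n}-\frac{3}{2n^2}$. So it suffices to prove
$$r_{s+1}\leqslant \frac{c_n}{s}\, r_s\qquad (1\leqslant s\leqslant n-1),$$
because then $r_s\leqslant c_n^{s-1}/(s-1)!$ and $Q_n\leqslant \sum_{s\geqslant 1}c_n^{s-1}/(s-1)!=e^{c_n}$. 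The case $s=1$ is exactly the inequality $r_2\leqslant c_n$ just noted.

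For the ratio inequality I would use a double counting of pairs $(F',e)$. Here $F'$ is a forest with $s+1$ components and $e$ is an edge of $K_n$ joining two different components of $F'$. Every such pair gives a forest $F'+e$ with $s$ components. Conversely, every forest with $s$ components has $n-s$ edges, and deleting any one of them gives such a pair. Writing $a_1,\dots,a_{s+1}$ for the component sizes of $F'$, this gives the exact identity
$$(n-s)F_s(K_n)=\sum_{F'}\sum_{i<j}a_ia_j=\sum_{F'}\frac{n^2-\sum_i a_i^2}{2}.$$
Hence $r_{s+1}/r_s=(n-s)/\mathbb{E}[e_2(\mathbf a)]$, where the expectation is over a uniform forest with $s+1$ components and $e_2(\mathbf a)=\sum_{i<j}a_ia_j$. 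The required inequality becomes the lower bound $\mathbb{E}[e_2(\mathbf a)]\geqslant s(n-s)/c_n$, which is approximately $2s(n-s)$. Equivalently, we need an upper bound on $\mathbb{E}[\sum_i a_i^2]$ of about $n^2-4s(n-s)$.

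The distribution of the component sizes is explicit:
$$\Pr(\mathbf a)\propto \frac{1}{(s+1)!}\binom{n}{a_1,\dots,a_{s+1}}\prod_i a_i^{a_i-2}.$$
To compute the second moment, I would use symmetry to get $\mathbb{E}[\sum_i a_i^2]=(s+1)\,\mathbb{E}[a_1^2]$ and then evaluate the resulting sums by Abel-type identities. Concretely, I would apply the identity for $\sum_k\binom{N}{k}k^{k-1}(N-k)^{N-k-1}$ and its variants, in which a tree is glued to a rooted forest. These should give closed forms in terms of the counts $s\,n^{n-s-1}\binom{n}{s}$ of rooted forests with $s$ components. An alternative route is to feed Rényi's exact alternating formula for $F_s(K_n)$ directly into $r_{s+1}/r_s$.

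I expect the main obstacle to be this second-moment estimate, uniformly in $s$. For small $s$ the sizes are very unbalanced: one giant component and small trees of size $O(1)$. The inequality then has to be sharp up to the $O(1/n)$ corrections built into $c_n$, and these corrections must be tracked exactly. For $s$ close to $n$ the bound is easy, since $r_s$ is then tiny compared to $c_n^{s-1}/(s-1)!$. If the uniform ratio bound fails for some middle range of $s$, my fallback is to prove $r_s\leqslant c_n^{s-1}/(s-1)!$ directly for $s\leqslant 3$. For larger $s$ I would use a crude tail bound to absorb the error into the slack $e^{c_n}-\sum_{s\leqslant 3}c_n^{s-1}/(s-1)!$. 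Finitely many small $n$ would be verified by direct computation of $F_n$.
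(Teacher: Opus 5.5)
\textbf{The main step fails.} The ratio bound $r_{s+1}\leqslant \frac{c_n}{s}r_s$ is false already at $s=2$. The paper quotes R\'enyi's exact values $r_2=\frac{(n-1)(n+6)}{2n^2}$ and $r_3=\frac{(n-1)(n-2)(n^2+13n+60)}{8n^4}$. From these,
\[
\frac{r_3}{r_2}-\frac{c_n}{2}=\frac{(n-2)(n^2+13n+60)-(n^2+5n-3)(n+6)}{4n^2(n+6)}=\frac{7n-102}{4n^2(n+6)},
\]
which is positive for every $n\geqslant 15$. Equivalently, your required bound $\mathbb{E}[e_2(\mathbf a)]\geqslant 2(n-2)/c_n$ over uniform $3$-component forests fails for these $n$. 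Your double-counting identity is correct; the target inequality is what is wrong.

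\textbf{The fallback fails as well, in both halves.} First, $r_3-\frac{c_n^2}{2}=\frac{4n^2-124n+111}{8n^4}>0$ for $n\geqslant 31$, so the termwise bound at $s=3$ is false. Second, the tail cannot be absorbed into the slack you name. Use $r_2-c_n=-\frac{3}{2n^2}$, the identity above, and the expansion $Q_n=\sqrt{e}\bigl(1+\frac{5}{2n}+\frac{11}{8n^2}+\cdots\bigr)$ stated in the paper. The latter gives $e^{c_n}-Q_n=\frac{\sqrt{e}}{4n^2}+O(n^{-3})$, and hence
\[
\sum_{s=4}^{n}r_s-\sum_{s\geqslant 4}\frac{c_n^{s-1}}{(s-1)!}=\frac{1-\sqrt{e}/4}{n^2}+O(n^{-3})>0 .
\]
So the tail strictly exceeds the Poisson tail for large $n$.

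\textbf{Why this happens.} The lemma is tight to second order: the total margin is only $\frac{\sqrt{e}}{4n^2}$. The discrepancies between $r_s$ and $c_n^{s-1}/(s-1)!$ are of the same order $n^{-2}$. The sign is favourable at $s=2$ but unfavourable at $s=3$ and $s=4$; for example, $r_4=\frac{(n-1)(n-2)(n-3)(n^3+21n^2+202n+840)}{48n^6}$ gives $r_4-\frac{c_n^3}{6}=\frac{21}{48n^2}+O(n^{-3})$. Any argument organized by number of components would therefore need the second-order asymptotics of every $r_s$, with error control uniform in $n$. A crude tail bound cannot do this.

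\textbf{What the paper does instead.} It never splits by component count. It writes
\[
F_n=(n-1)![u^{n-1}](1-u)e^{(n+1)u-u^2/2}=L_{n-1}-(n-1)L_{n-2},\qquad L_j=\He_j(n+1).
\]
From the three-term Hermite recurrence it bounds each ratio $L_j/L_{j-1}\leqslant a\frac{a^2-2j+3}{a^2-j+2}$ with $a=n+1$. It then controls the logarithm of the resulting product by a third-order Taylor estimate, finishing with an explicit rational-function inequality. This handles the whole sum at once with second-order accuracy.
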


We postpone the proof of Lemma~\ref{Q_n-upper-bound} because it fits more naturally into Section~\ref{incusion-exclusion-sect}.

\begin{Rem}
Much more precise statements are known, but we need some inequality valid for all $n$. For example,
$$Q_n=\sqrt{e}\left(1+\frac{5}{2n}+\frac{11}{8n^2}-\frac{203}{16n^3}-\frac{17207}{384n^4}-\frac{3607}{768n^5}+\frac{1408301}{3072n^6}+\frac{8181503}{6144n^7}+\dots\right).$$
\end{Rem}

Finally, we will use the following basic inequalities.

\begin{Lemma} \label{Lemma-ln(1+x)}
(a) For $x\in (0,1)$ we have
\begin{equation} \label{ln(1+x)}
x-\frac{x^2}{2}<x-\frac{x^2}{2}+\frac{x^3}{3}-\frac{x^4}{4}<\ln(1+x)<x-\frac{x^2}{2}+\frac{x^3}{3}<x.
\end{equation}
(b) For $x\in [0,1]$ we have
\begin{equation} \label{ln(1+x)-2}
\frac{2x}{2+x}\leqslant \ln(1+x)\leqslant\frac{x(2+x)}{2(1+x)}.
\end{equation}
(c) For $\delta\geqslant 1$ we have
\begin{equation} \label{func-ineq-0}
 \delta \ln\left(1+\frac{1}{\delta}\right)>1-\frac{1}{2\delta}
\end{equation}
and
\begin{equation} \label{func-ineq}
\left(1+\frac{1}{2\delta}\right)\cdot \delta \ln\left(1+\frac{1}{\delta}\right)>1.
\end{equation}
\end{Lemma}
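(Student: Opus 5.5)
Parts (a) and (b) are elementary calculus facts, and part (c) follows from (b).

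For (a), I will use the alternating Taylor series $\ln(1+x)=\sum_{k\ge1}(-1)^{k+1}x^k/k$. For $x\in(0,1)$ its terms strictly decrease in absolute value, so by the alternating series estimate the partial sums alternately over- and under-estimate the limit, with strict inequalities. This gives the two middle inequalities. The outer ones are immediate: $\frac{x^3}{3}-\frac{x^4}{4}>0$ and $-\frac{x^2}{2}+\frac{x^3}{3}<0$ for $x\in(0,1)$. Alternatively, to avoid series, set $g(x)=\ln(1+x)-(x-\frac{x^2}{2}+\frac{x^3}{3})$. Then $g(0)=0$ and $g'(x)=\frac{1}{1+x}-1+x-x^2=-\frac{x^3}{1+x}<0$. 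The lower bound $x-\frac{x^2}{2}+\frac{x^3}{3}-\frac{x^4}{4}$ is handled the same way, with derivative difference $\frac{x^4}{1+x}>0$.

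For (b), I will use the same derivative strategy. Put $h_1(x)=\ln(1+x)-\frac{2x}{2+x}$. Then $h_1(0)=0$ and
$$h_1'(x)=\frac{1}{1+x}-\frac{4}{(2+x)^2}=\frac{x^2}{(1+x)(2+x)^2}\ge 0.$$
Similarly, put $h_2(x)=\frac{x(2+x)}{2(1+x)}-\ln(1+x)$. Writing $\frac{x(2+x)}{2(1+x)}=\frac12\left(1+x-\frac{1}{1+x}\right)$, we get $h_2(0)=0$ and
$$h_2'(x)=\frac12\left(1+\frac{1}{(1+x)^2}\right)-\frac{1}{1+x}=\frac{x^2}{2(1+x)^2}\ge0.$$
Both inequalities in fact hold for all $x\ge0$.

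For (c), apply the lower bound of (b) with $x=1/\delta\in(0,1]$:
$$\delta\ln\left(1+\frac1\delta\right)\ge \frac{2\delta}{2\delta+1}.$$
This is equality-free only for $x>0$, which holds here, since $h_1'>0$ on $(0,1]$ makes $h_1(x)>0$ strictly. Then \eqref{func-ineq} follows at once: $\left(1+\frac{1}{2\delta}\right)\cdot\frac{2\delta}{2\delta+1}=1$, and the inequality is strict. For \eqref{func-ineq-0}, note that $\frac{2\delta}{2\delta+1}\ge 1-\frac{1}{2\delta}$ is equivalent to $4\delta^2\ge (2\delta-1)(2\delta+1)=4\delta^2-1$, which is true. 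Combined with the strict inequality from (b), this gives \eqref{func-ineq-0}.

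The only point needing care is strictness at the boundary cases. In (b) the inequalities are non-strict, since equality holds at $x=0$. In (c) we need strictness, and it comes from $x=1/\delta>0$ together with $h_1'>0$ on $(0,\infty)$. No real obstacle is expected.
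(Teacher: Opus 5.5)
Your proof is correct and follows essentially the paper's route: the alternating series for (a), the derivative computations $\frac{x^2}{(1+x)(2+x)^2}$ and $\frac{x^2}{2(1+x)^2}$ for (b), and the lower bound of (b) at $x=1/\delta$ for \eqref{func-ineq}. The only deviation is in \eqref{func-ineq-0}. The paper gets it from (a) with $x=1/\delta$ plus continuity at $\delta=1$, and only remarks that (b) gives the stronger bound $\frac{2\delta}{2\delta+1}>1-\frac{1}{2\delta}$; you derive it from (b) directly, and your strictness argument via $h_1'>0$ on $(0,1]$ is more careful than the paper's non-strict statement of (b) and its continuity step (which alone gives only $\geqslant$ at $\delta=1$).
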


\begin{proof}
Part (a) immediately follows from $\ln(1+x)=\sum_{k=1}^{\infty}(-1)^{k+1}\frac{x^k}{k}$ and the fact that for $x\in (0,1)$ we have $\frac{x^k}{k}>\frac{x^{k+1}}{k+1}$.

Part (b) follows from the fact that the derivatives satisfy
$$\left(\ln(1+x)-\frac{2x}{2+x}\right)'=\frac{x^2}{(1+x)(2+x)^2}>0$$
and
$$\left(\frac{x(2+x)}{2(1+x)}-\ln(1+x)\right)'=\frac{x^2}{2(1+x)^2}>0$$
and both differences take the value $0$ at $x=0$.

The first inequality in part (c) follows from part (a) for $\delta >1$ by taking $x=\frac{1}{\delta}$ and by continuity to $\delta=1$.
To prove the second inequality in part (c) observe that with $x=\frac{1}{\delta}$ we obtain the first inequality in part (b). (Also, observe that the second inequality is actually stronger:
$\frac{2\delta}{2\delta+1}>1-\frac{1}{2\delta}$.)

\end{proof}

\medskip

Now we are ready to prove Theorem~\ref{forest-tree-ratio}.

\begin{proof}[Proof of Theorem~\ref{forest-tree-ratio}]
First we prove the statement for $2\leqslant n\leqslant 5$. We have
$$\frac{F(G)}{T(G)}\geqslant 1+\frac{F_2(G)}{T(G)}\geqslant 1+\frac{(n+6)(n-1)}{2n^2}$$
by Lemma~\ref{2-forests-complete-graph}. For $2\leqslant n\leqslant 5$ the right-hand side is at least
$1+\frac{11\cdot 4}{50}=\frac{47}{25}>\sqrt{3}>\sqrt{e}$.
Henceforth, assume that $n\geqslant 6$. 

Let $\delta$ be the size of the smallest cut and let $C$ be a cut of size $\delta$. Furthermore, let
$R_C=\sum_{e\in C}R_{\eff}(e^-,e^+).$
From Lemma~\ref{cut-resistance} we know that $R_C\geqslant 1$.
\medskip

\noindent \textbf{Case 1:} $m\leqslant \frac{(n-1)^2}{3}$. Then
\begin{align*}
\frac{F(G)}{T(G)}&\geqslant 1+\frac{F_2(G)}{T(G)} \\
&=1+\mathrm{RSW}_1+\mathrm{RSW}_2 && (\text{Theorem~\ref{thm: RSW-formula}})\\
&\geqslant 1 +\mathrm{RSW}_1\\
&\geqslant 1 + \frac{(n-1)^2}{4m}  && (\text{Lemma~\ref{RSW-1-lower-bound}})\\
&\geqslant 1 +\frac{3}{4}  && \left(m\leqslant \frac{(n-1)^2}{3}\right)\\
&>\sqrt{e}.
\end{align*}
\medskip

\noindent \textbf{Case 2:} $\delta>\frac{n}{4}$. Then
\begin{align*}
\ln\left(\frac{F(G)}{T(G)}\right)&\geqslant  \frac{F_2(G)}{T(G)}\cdot \delta \ln\left(1+\frac{1}{\delta}\right) && (\text{Theorem~\ref{ratio-minimum}})\\
&\geqslant \frac{F_2(K_n)}{T(K_n)} \cdot \delta \ln\left(1+\frac{1}{\delta}\right) && (\text{Lemma~\ref{2-forests-complete-graph}})\\
&=\left(1+\frac{6}{n}\right)\frac{n-1}{2n}\cdot \delta \ln\left(1+\frac{1}{\delta}\right) && (\text{Lemma~\ref{2-forests-complete-graph}})\\
&\geqslant \left(1+\frac{6}{n}\right)\frac{n-1}{2n}\cdot \left(1-\frac{1}{2\delta}\right) && (\text{Ineq.
} \eqref{func-ineq-0})\\
&\geqslant \left(1+\frac{6}{n}\right)\frac{n-1}{2n}\cdot \left(1-\frac{2}{n}\right) && \left(\delta>\frac{n}{4}\right)\\
&=\frac{1}{2}+\frac{3}{2n}-\frac{8}{n^2}+\frac{6}{n^3}\\
&>\frac{1}{2}.
\end{align*}
In the last step we used that $\frac{3}{2n}>\frac{8}{n^2}$ if $n>\frac{16}{3}$ which is true since $n\geqslant 6$.
\medskip

\noindent \textbf{Case 3:} $\delta\leqslant \frac{n}{4}$ and $m> \frac{(n-1)^2}{3}$. In the following calculation we will use two minor observations. First, if $C=E(S,V\setminus S)$ with $|S|=k$, then we have
\begin{equation} \label{m-delta-upper}
m-\delta\leqslant \binom{k}{2}+\binom{n-k}{2}=\binom{n}{2}-k(n-k)\leqslant \binom{n}{2}-(n-1)=\binom{n-1}{2}.
\end{equation}
Furthermore, for the polynomial 
$$P(x)=\frac{1}{\delta}x^2+\frac{1}{m-\delta}(n-1-x)^2$$
we have $\min_{x\geqslant 1}P(x)=P(1)$ if $\frac{m}{n-1}\geqslant \delta$ since this quadratic polynomial has positive leading coefficient and a minimum at
$$\frac{\frac{1}{m-\delta}(n-1)}{\frac{1}{\delta}+\frac{1}{m-\delta}}=\frac{\delta (n-1)}{m}\leqslant 1.$$
In our case the condition $\frac{m}{n-1}\geqslant \delta$ is satisfied as $\frac{m}{n-1}> \frac{n-1}{3}\geqslant \frac{n}{4}\geqslant \delta$. 
\medskip

With these observations in mind we have
\begin{align*}
\ln\left(\frac{F(G)}{T(G)}\right)&\geqslant  \frac{F_2(G)}{T(G)}\cdot \delta \ln\left(1+\frac{1}{\delta}\right) && (\text{Theorem~\ref{ratio-minimum}})\\
&=(\mathrm{RSW}_1+\mathrm{RSW}_2)\cdot \delta \ln\left(1+\frac{1}{\delta}\right) && (\text{Theorem~\ref{thm: RSW-formula}})\\
&\geqslant \left(1+\frac{6}{n}\right)\mathrm{RSW}_1\cdot \delta \ln\left(1+\frac{1}{\delta}\right) && (\text{Lemma~\ref{RSW-1-RSW-2}})\\
&=\left(1+\frac{6}{n}\right)\cdot \frac{1}{4}\sum_{e\in E}R_{\eff}(e^-,e^+)^2\cdot \delta \ln\left(1+\frac{1}{\delta}\right)  && (\text{Def. of}\ \mathrm{RSW_1})\\
&=\left(1+\frac{6}{n}\right)\cdot \frac{1}{4}\left(\sum_{e\in C}R_{\eff}(e^-,e^+)^2+\sum_{e\in E\setminus C}R_{\eff}(e^-,e^+)^2\right)\cdot \delta \ln\left(1+\frac{1}{\delta}\right)\\
&\geqslant \frac{1}{4}\left(1+\frac{6}{n}\right)\cdot \delta \ln\left(1+\frac{1}{\delta}\right) \\
&\quad \cdot \left(\frac{1}{\delta}\left(\sum_{e\in C}R_{\eff}(e^-,e^+)\right)^2+\frac{1}{m-\delta}\left(\sum_{e\in E\setminus C}R_{\eff}(e^-,e^+)\right)^2\right) && (\text{Cauchy-Schwarz})\\
&=\left(1+\frac{6}{n}\right)\cdot \frac{1}{4}\left(\frac{1}{\delta}R_C^2+\frac{1}{m-\delta}(n-1-R_C)^2\right)\cdot \delta \ln\left(1+\frac{1}{\delta}\right)\\
&\geqslant \left(1+\frac{6}{n}\right)\cdot \frac{1}{4}\left(\frac{1}{\delta}+\frac{1}{m-\delta}(n-2)^2\right)\cdot \delta \ln\left(1+\frac{1}{\delta}\right) && (P(R_C)\geqslant P(1))\\
&\geqslant \left(1+\frac{6}{n}\right)\cdot \frac{1}{4}\left(\frac{1}{\delta}+\frac{1}{\binom{n-1}{2}}(n-2)^2\right)\cdot \delta \ln\left(1+\frac{1}{\delta}\right) && (\text{Ineq.~\eqref{m-delta-upper}})\\
&= \left(1+\frac{6}{n}\right)\cdot \frac{1}{4}\left(\frac{1}{\delta}+2-\frac{2}{n-1}\right)\cdot \delta \ln\left(1+\frac{1}{\delta}\right)\\
&=\left(\frac{1}{2}+\frac{1}{4\delta}+\frac{3}{n}-\frac{1}{2(n-1)}-\frac{3}{n(n-1)}+\frac{3}{2\delta n}\right)\cdot \delta \ln\left(1+\frac{1}{\delta}\right)\\
&=\left(\frac{1}{2}+\frac{1}{4\delta}+\frac{5n-12}{2n(n-1)}+\frac{3}{2\delta n}\right) \cdot \delta \ln\left(1+\frac{1}{\delta}\right)\\
&\geqslant\frac{1}{2}\left(1+\frac{1}{2\delta}\right)\cdot \delta \ln\left(1+\frac{1}{\delta}\right) && (n\geqslant 3)\\
&>\frac{1}{2}. && (\text{Ineq~\eqref{func-ineq}})
\end{align*}
Hence $Q(G)>\sqrt{e}$ in all cases.
\end{proof}

Next we prove Theorems~\ref{forest-tree-ratio-2} and \ref{forest-tree-ratio-2-quantitative}. The proof will be very similar to the previous one.

\begin{proof}[Proof of Theorems~\ref{forest-tree-ratio-2} and \ref{forest-tree-ratio-2-quantitative}]
We will assume that $n\geqslant 6$. For $n\leqslant 5$ it is easy to check the statement either by computer or very slightly refining the estimates given below. Let $G$ be a graph with $|E(G)|=\binom{n}{2}-h$ edges, where $h=cn$. 

Let $\delta$ be the size of the smallest cut and let $C$ be a cut of size $\delta$. Furthermore, let
$R_C=\sum_{e\in C}R_{\eff}(e^-,e^+).$
From Lemma~\ref{cut-resistance} we know that $R_C\geqslant 1$.
\medskip

\noindent \textbf{Case 1:} $m\leqslant \frac{(n-1)^2}{3}$. Then
\begin{align*}
\frac{F(G)}{T(G)}&\geqslant 1+\frac{F_2(G)}{T(G)}\\
&= 1 +\mathrm{RSW}_1+\mathrm{RSW}_2 && (\text{Theorem~\ref{thm: RSW-formula}})\\
&\geqslant 1 +\left(1+\frac{6}{n}\right)\mathrm{RSW}_1 && (\text{Lemma~\ref{RSW-1-RSW-2}})\\
&\geqslant 1 + \left(1+\frac{6}{n}\right)\frac{(n-1)^2}{4m}  && (\text{Lemma~\ref{RSW-1-lower-bound}})\\
&\geqslant 1 +\left(1+\frac{6}{n}\right)\frac{3}{4} && \left(m\leqslant \frac{(n-1)^2}{3}\right)\\
&>\exp\left(\frac{1}{2}+\frac{5}{2n}\right) && (n\geqslant 7).
\end{align*}
The last inequality holds true if $n\geqslant 7$, for $1\leqslant n\leqslant 6$ one can check by a computer that $1+\frac{3}{4}\left(1+\frac{6}{n}\right)>Q_n$. So in this case $Q(G)>Q_n$ for all $n$.
\medskip

\noindent \textbf{Case 2:} $\delta_v>\delta\geqslant \frac{n}{4}$. By Lemma~\ref{edge-number-estimate} we have $\delta\leqslant \frac{n}{2}-2$ and the number of edges is at most
$$m\leqslant \binom{n}{2}-(\delta+1)(n-\delta-1)+\delta\leqslant \binom{n}{2}-\left(\frac{n}{4}+1\right)\left(n-\frac{n}{4}-1\right)+\frac{n}{4}=\frac{5}{16}n^2-\frac{3n}{4}+1<\frac{(n-1)^2}{3},$$
where the last inequality holds true if $n>4$, and the first inequality is true, because the function
$\psi(x)=(x+1)(n-x-1)-x$ has derivative $\psi'(x)=n-3-2x\geqslant 1$ on $[n/4,n/2-2]$. So we are actually in Case 1. For $n\leqslant 4$, the condition $\delta_v>\delta$ cannot be satisfied.
\medskip

\noindent \textbf{Case 3:} $\delta_v>\delta$, $\delta<\frac{n}{4}$ and $m> \frac{(n-1)^2}{3}$. In the forthcoming computation we will use two minor observations. First, if $C=E(S,V\setminus S)$ with $|S|=k$, then we have
\begin{equation} \label{m-delta-upper-2}
m-\delta\leqslant \binom{k}{2}+\binom{n-k}{2}=\binom{n}{2}-k(n-k)\leqslant \binom{n}{2}-2(n-2)<\frac{(n-2)^2}{2}
\end{equation}
since $\delta<\delta_v$ means that $2\leqslant k\leqslant n-2$.
Furthermore, for the polynomial 
$$P(x)=\frac{1}{\delta}x^2+\frac{1}{m-\delta}(n-1-x)^2$$
we have $\min_{x\geqslant 1}P(x)=P(1)$ if $\frac{m}{n-1}\geqslant \delta$ since this quadratic polynomial has positive leading coefficient and a minimum at
$$\frac{\frac{1}{m-\delta}(n-1)}{\frac{1}{\delta}+\frac{1}{m-\delta}}=\frac{\delta (n-1)}{m}\leqslant 1.$$
In our case the condition $\frac{m}{n-1}\geqslant \delta$ is satisfied as $\frac{m}{n-1}> \frac{n-1}{3}\geqslant \frac{n}{4}>\delta$. 
\medskip

With these observations in mind we have
\begin{align*}
\ln\left(\frac{F(G)}{T(G)}\right)&\geqslant  \frac{F_2(G)}{T(G)}\cdot \delta \ln\left(1+\frac{1}{\delta}\right) && (\text{Theorem~\ref{ratio-minimum}})\\
&=(\mathrm{RSW}_1+\mathrm{RSW}_2)\cdot \delta \ln\left(1+\frac{1}{\delta}\right) && (\text{Theorem~\ref{thm: RSW-formula}})\\
&\geqslant \left(1+\frac{6}{n}\right)\mathrm{RSW}_1\cdot \delta \ln\left(1+\frac{1}{\delta}\right) && (\text{Lemma~\ref{RSW-1-RSW-2}})\\
&=\left(1+\frac{6}{n}\right)\cdot \frac{1}{4}\sum_{e\in E}R_{\eff}(e^-,e^+)^2\cdot \delta \ln\left(1+\frac{1}{\delta}\right) && (\text{Def of.}\ \mathrm{RSW_1})\\
&=\left(1+\frac{6}{n}\right)\cdot \frac{1}{4}\left(\sum_{e\in C}R_{\eff}(e^-,e^+)^2+\sum_{e\in E\setminus C}R_{\eff}(e^-,e^+)^2\right)\cdot \delta \ln\left(1+\frac{1}{\delta}\right)\\
&\geqslant \frac{1}{4}\left(1+\frac{6}{n}\right)\cdot \delta \ln\left(1+\frac{1}{\delta}\right)\\
&\quad \cdot \left(\frac{1}{\delta}\left(\sum_{e\in C}R_{\eff}(e^-,e^+)\right)^2+\frac{1}{m-\delta}\left(\sum_{e\in E\setminus C}R_{\eff}(e^-,e^+)\right)^2\right)  && (\text{Cauchy-Schwarz})\\
&=\left(1+\frac{6}{n}\right)\cdot \frac{1}{4}\left(\frac{1}{\delta}R_C^2+\frac{1}{m-\delta}(n-1-R_C)^2\right)\cdot \delta \ln\left(1+\frac{1}{\delta}\right)\\
&\geqslant \left(1+\frac{6}{n}\right)\cdot \frac{1}{4}\left(\frac{1}{\delta}+\frac{1}{m-\delta}(n-2)^2\right)\cdot \delta \ln\left(1+\frac{1}{\delta}\right) && (P(R_C)\geqslant P(1))\\
&\geqslant \left(1+\frac{6}{n}\right)\cdot \frac{1}{4}\left(\frac{1}{\delta}+2\right)\cdot \delta \ln\left(1+\frac{1}{\delta}\right) && (\text{Ineq.~\eqref{m-delta-upper-2}})\\
&>\left(1+\frac{6}{n}\right)\frac{1}{2} && (\text{Ineq.~\eqref{func-ineq}})\\
&>\ln(Q_n). && (\text{Lemma~\ref{Q_n-upper-bound}})
\end{align*}
Hence $Q(G)>Q_n$.
\bigskip

\noindent \textbf{Case 4:} $\delta_v=\delta$. Then
\begin{align*}
\ln\left(\frac{F(G)}{T(G)}\right)&\geqslant  \frac{F_2(G)}{T(G)}\cdot \delta \ln\left(1+\frac{1}{\delta}\right) && (\text{Theorem~\ref{ratio-minimum}})\\
&=(\mathrm{RSW}_1+\mathrm{RSW}_2)\cdot \delta \ln\left(1+\frac{1}{\delta}\right) && (\text{Theorem~\ref{thm: RSW-formula}})\\
&\geqslant \left(\frac{(n-1)^2}{4m}+\frac{3}{4\delta}\left(1+\frac{\delta-1}{n}\right)^2\right)\cdot \delta \ln\left(1+\frac{1}{\delta}\right) && (\text{Lemma~\ref{RSW-1-lower-bound} and \ref{RSW-2-lower-bound}})
\end{align*}
Let
\begin{equation} \label{def:fmnd}
f(n,m,\delta):=\left(\frac{(n-1)^2}{4m}+\frac{3}{4\delta}\left(1+\frac{\delta-1}{n}\right)^2\right)\cdot \delta \ln\left(1+\frac{1}{\delta}\right).
\end{equation}
For each $n$ and $\delta$ let 
$$m(n,\delta)=\max \left\{ q\in \left\{1,\dots ,\binom{n}{2}\right\}\ \bigg|\ f(n,q,\delta)>\ln(Q_n)\right\},$$
and let
$$m_n=\min_{1\leqslant \delta \leqslant n-1}m(n,\delta).$$
Furthermore, let 
$$h_n:=\binom{n}{2}-m_n.$$
For fixed $n,\delta$, the function $m\mapsto f(n,m,\delta)$ is decreasing. Hence $m\leqslant m_n\leqslant m(n,\delta)$ implies $f(n,m,\delta)>\ln(Q_n)$.
Thus, if $m\leqslant m_n$, equivalently $h\geqslant h_n$, then $Q(G)>Q_n$. 
With a computer, we can easily compute $m_n$ and $h_n$. In the following, we give their values for $n\leqslant 20$.
\bigskip

\begin{table}[htbp]
\centering
\caption{Data for $n\leqslant 20$.}
\label{tab: table 1}

\begin{tabular}{|c|cccccccccccccccccc|} \hline
$n$ &   3 & 4 & 5 & 6 & 7 & 8 & 9 & 10 & 11 & 12 & 13 & 14 & 15 & 16 & 17 & 18 & 19  & 20 \\ \hline
$m_n$ &  2& 4& 7& 12& 17& 24& 32& 40& 50& 60& 72& 85& 98& 113& 129& 145& 163& 182 \\ \hline
$h_n$ &  1& 2& 3& 3& 4& 4& 4& 5& 5& 6& 6& 6& 7& 7& 7& 8& 8& 8\\ \hline
$\lfloor 0.49n+1/2\rfloor$ & 1 & 2 & 2 & 3 & 3 & 4 & 4 & 5 & 5 & 6 & 6 & 7 & 7 & 8 & 8 & 9 & 9 & 10 \\ \hline
\end{tabular}
\end{table}

\bigskip

We can see that Theorem~\ref{forest-tree-ratio-2-quantitative} holds true for $8\leqslant n\leqslant 20$.

\begin{align*}
f(n,m,\delta)&\geqslant \left(\frac{(n-1)^2}{4\left(\binom{n}{2}-cn\right)}+\frac{3}{4\delta}\left(1+\frac{\delta-1}{n}\right)^2\right)\cdot \delta \ln\left(1+\frac{1}{\delta}\right)\\
&\geqslant \left(\frac{(n-1)^2}{4\left(\binom{n}{2}-c(n-1)\right)}+\frac{3}{4\delta}\left(1+\frac{\delta-1}{n}\right)^2\right)\cdot \delta \ln\left(1+\frac{1}{\delta}\right)\\
&= \left(\frac{n-1}{2(n-2c)}+\frac{3}{4\delta}\left(1+\frac{\delta-1}{n}\right)^2\right)\cdot \delta \ln\left(1+\frac{1}{\delta}\right)\\
&\geqslant  \frac{1}{2}\left(\frac{n-1}{n-2c}+\frac{3}{2\delta}\left(1+\frac{\delta-1}{n}\right)^2\right)\cdot \left(1-\frac{1}{2\delta}\right)\\
&=\frac{1}{2}\left(1+\frac{2c-1}{n-2c}+\frac{3}{2\delta}\left(1+\frac{\delta-1}{n}\right)^2\right)\cdot \left(1-\frac{1}{2\delta}\right).
\end{align*}
Let
$$L_n(c,\delta):=\frac{1}{2}\left(1+\frac{2c-1}{n-2c}+\frac{3}{2\delta}\left(1+\frac{\delta-1}{n}\right)^2\right)\cdot \left(1-\frac{1}{2\delta}\right).$$

It is enough to prove that
\begin{equation} \label{Lncd-inequality}
L_n(c,\delta)\geqslant \frac{1}{2}\left(1+\frac{5}{n
}-\frac{3}{n^2}\right)
\end{equation}
as the right-hand side is larger than $\ln(Q_n)$ by Lemma~\ref{Q_n-upper-bound}. One can show that for $c>\frac{3-\sqrt{6}}{2}$ inequality \eqref{Lncd-inequality} holds true for sufficiently large $n$. Nevertheless, we follow another route. First, we show that for $c\geqslant 1/2$ the inequality \eqref{Lncd-inequality} holds true for $n\geqslant 20$. Then with a slight modification of the above method we show that for $c< 1/2$, $n\geqslant 20$ and $h\geqslant \frac{n}{4}+\frac{9}{2}$ we also have $Q(G)>Q_n$. This will resolve both Theorems~\ref{forest-tree-ratio-2} and \ref{forest-tree-ratio-2-quantitative}.

First,  consider the case $c\geqslant \frac{1}{2}$. For fixed $n,\delta$, the function $L_n(c,\delta)$ is increasing in $c$. Hence, when $c\geqslant \frac{1}{2}$, it is enough to consider $c=\frac{1}{2}$.
Thus it is enough to show that for $n\geqslant 20$ we have $L_n(1/2,\delta)>\frac{1}{2}\left(1+\frac{5}{n}-\frac{3}{n^2}\right)$. We have
\begin{equation*}L_n(1/2,\delta)-\frac{1}{2}\left(1+\frac{5}{n}-\frac{3}{n^2}\right)=\frac{(6\delta^3-8\delta^2 n+4\delta n^2)-3(\delta^2+6\delta n+n^2)+(6n + 12\delta)- 3}{8\delta^2n^2}
\end{equation*}
Since $6n+12\delta>3$ it is enough to prove that
$$6\delta^3-8\delta^2 n+4\delta n^2>3(\delta^2+6\delta n+n^2)$$
if $n\geqslant 20$. Let $\alpha=\frac{\delta}{n}$. Then we need to prove that
$$n>g(\alpha):=\frac{3(\alpha^2+6\alpha+1)}{6\alpha^3-8\alpha^2+4\alpha}.$$
If $\delta=1$, let $e$ be a bridge. Every spanning tree contains $e$, and the map $T\mapsto T-e$ injects the spanning trees into the two-component forests. Hence
$F_2(G)\geqslant T(G)$ and $Q(G)\geqslant 2$. Note that for $n\geqslant 20$ we have
$$\ln(Q_n)<\frac{1}{2}+\frac{5}{2\cdot 20}-\frac{3}{2\cdot 20^2}<\frac{5}{8}<\frac{2}{3}<\ln(2)$$
by Lemma~\ref{Q_n-upper-bound} and by \eqref{ln(1+x)-2}.
Thus, $Q_n<2$ for $n\geq 20$ and we are done in this case as well. So we can assume that $\delta\geqslant 2$ and consequently $\alpha\geqslant \frac{2}{n}$. 
We have
$$g'(\alpha)=-\frac{3(3\alpha^4 + 36\alpha^3 - 17\alpha^2 - 8\alpha + 2)}{2(3\alpha^2 - 4\alpha + 2)^2\alpha^2}.$$
Let
$$P(\alpha)=3\alpha^4 + 36\alpha^3 - 17\alpha^2 - 8\alpha + 2.$$
Then for $0\leqslant \alpha \leqslant \frac{1}{10}$ we have
$$P(\alpha)\geqslant 2-\frac{17}{100}-\frac{8}{10}>0$$
so $g$ is decreasing on this interval. 
For $\alpha\geqslant \frac{1}{10}$, the inequality $g(\alpha)<20$ is equivalent to
$$ 120\alpha^3-163\alpha^2+62\alpha-3>0. $$
This cubic has discriminant $-2247488<0$, so it has only one real root; since its value at $0$ is negative and its value at $1/10$ is $\frac{169}{100}>0$, that root lies below $\frac{1}{10}$. Therefore the cubic is positive for every $\alpha\geqslant \frac{1}{10}$. On the interval $[\frac{2}{n},\frac{1}{10}]$, we have
$g(\alpha)\leqslant  g\left(\frac{2}{n}\right)<n$
for $n\geqslant 14$. On the interval $\left[1/10,1\right]$ we have $g(\alpha)<20\leqslant n$.

Hence if $n\geqslant 20$ and $c\geqslant \frac{1}{2}$, then
$$\ln(Q(G))\geqslant L_n(1/2,\delta)>\frac{1}{2}\left(1+\frac{5}{n}-\frac{3}{n^2}\right)>\ln(Q_n).$$

To prove the case $c\leqslant \frac{1}{2}$ we will need to slightly improve on our bounds. We will show that if $n\geqslant 20$ and $h\geqslant \frac{n}{4}+\frac{9}{2}$, then $Q(G)>Q_n$. 
\medskip

In this part, we will use the average
$$\frac{1}{F_2(G)}\sum_{S \in \mathcal{F}_2(G)}\delta(S)\ln \left(1+\frac{1}{\delta(S)}\right)\ \ \ \mbox{instead of}\ \ \ \delta \ln\left(1+\frac{1}{\delta}\right)$$
in Theorem~\ref{ratio-minimum}.

Note that we have
\begin{equation} \label{delta-average}
\frac{1}{T(G)}\sum_{S \in \mathcal{F}_2(G)}\delta(S)=n-1
\end{equation}
by double-counting the pairs $(T,e)$, where $T$ is a spanning tree and $e\in E(T)$ and noting that $T-e$ is a $2$-forest with $e$ being in the cut of $T-e$.

We also need a slightly more involved identity. Note that by Lemma~\ref{effective-combinatorial} we have
$$|\left\{S\in \cF_{2}(G)\ |\ u,v\ \text{lie in different components of}\ S\right\}|=T(G)R_{\eff}(u,v).$$
Thus by double-counting, if $S_+$ and $S_-$ denote the vertex sets of the connected components of a $2$-forest $S$, then we have
\begin{equation} \label{product-average}
\frac{1}{T(G)}\sum_{S \in \mathcal{F}_2(G)}|S_+||S_-|=\sum_{u<v}R_{\eff}(u,v)=n\sum_{i=1}^{n-1}\frac{1}{\lambda_i},
\end{equation}
where $\lambda_1\geq \dots \geq \lambda_{n-1}\geq \lambda_n=0$ are the eigenvalues of the Laplacian matrix of the graph $G$. The second equality is a well-known identity about the Kirchhoff index, see for example \cite{xiao2003resistance}. Recall that $\sum_{i=1}^{n-1}\lambda_i=\mathrm{Tr}(L(G))=\sum_{i=1}^nd_i=2e(G)$.
Another notable fact is that the eigenvalues of the Laplacian matrix of the complement graph $\overline{G}$ are $n-\lambda_{n-1}\geq n-\lambda_{n-2}\geq \dots \geq n-\lambda_{1}\geq 0$, see for instance Lemma 13.1.3 in the book \cite{godsil2001algebraic}. This also implies that for any simple graph $H$ we have $\lambda_{\max}(H)\leqslant v(H)$ since the Laplacian eigenvalues of the complement of the graph are non-negative.
This implies that if $H_1,\dots ,H_k$ are the connected components of the graph $\overline{G}$, then $\lambda_1(\overline{G})\leqslant \max_{j=1}^kv(H_j)\leq e(\overline{G})+1=h+1$. Hence $\lambda_{n-1}=n-\lambda_1(\overline{G})\geqslant n-h-1$. Then by combining \eqref{delta-average} and \eqref{product-average} with the lower bound on $\lambda_{n-1}$ we have 
\begin{equation} \label{difference-upper-bound}
\frac{1}{T(G)}\sum_{S \in \mathcal{F}_2(G)}(|S_+||S_-|-\delta(S))=\sum_{i=1}^{n-1}\frac{n-\lambda_i}{\lambda_i}\leqslant \frac{1}{n-h-1}\sum_{i=1}^{n-1}(n-\lambda_i)=\frac{2h}{n-h-1}.
\end{equation}
(Another useful lower bound for $\lambda_{n-1}$ is in terms of the maximum degree $\Delta(\overline{G})$ of the complement:
$\lambda_{n-1}=n-\overline{\lambda_1}\geqslant n-2\Delta(\overline{G})=n-2(n-1-\delta)=2\delta-n+2$, but we will not use it.)
Hence we have

\begin{align*}
\ln(Q(G))&=\ln \left(\frac{F(G)}{T(G)}\right)\\
&\geqslant\frac{F_2(G)}{T(G)}\cdot \frac{1}{F_2(G)}\sum_{S\in \mathcal{F}_2(G)}\delta(S)\ln\left(1+\frac{1}{\delta(S)}\right) && (\text{Theorem~\ref{ratio-minimum}})\\
&=\frac{1}{T(G)}\sum_{S\in \mathcal{F}_2(G)}\delta(S)\ln\left(1+\frac{1}{\delta(S)}\right)\\
&\geqslant \frac{1}{T(G)}\sum_{S\in \mathcal{F}_2(G)}\left(1-\frac{1}{2\delta(S)}\right) &&(\text{Ineq.~\eqref{func-ineq-0}})\\
&=\frac{F_2(G)}{T(G)}-\frac{1}{2T(G)}\sum_{S\in \mathcal{F}_2(G)}\frac{1}{\delta(S)}\\
&=\frac{F_2(G)}{T(G)}-\frac{1}{2T(G)}\sum_{S\in \mathcal{F}_2(G)}\left(\frac{1}{\delta(S)}-\frac{1}{|S_+||S_-|}\right)\\
&\quad -\frac{1}{2T(G)}\sum_{S\in \mathcal{F}_2(G)}\frac{1}{|S_+||S_-|}\\
&\geqslant\frac{F_2(G)}{T(G)}-\frac{1}{2T(G)}\sum_{S\in \mathcal{F}_2(G)}\left(\frac{1}{\delta(S)}-\frac{1}{|S_+||S_-|}\right)\\
&\quad -\frac{1}{2T(G)}\sum_{S\in \mathcal{F}_2(G)}\frac{1}{n-1} && (|S_+||S_-|\geqslant n-1)\\
&=\frac{F_2(G)}{T(G)}\left(1-\frac{1}{2(n-1)}\right)-\frac{1}{2T(G)}\sum_{S\in \mathcal{F}_2(G)}\frac{|S_+||S_-|-\delta(S)}{\delta(S)|S_+||S_-|} \\
&\geqslant \frac{F_2(G)}{T(G)}\left(1-\frac{1}{2(n-1)}\right)-\\
&\quad -\frac{1}{2\delta (n-1)}\frac{1}{T(G)}\sum_{S\in \mathcal{F}_2(G)}(|S_+||S_-|-\delta(S)) && (\delta(S)|S_+||S_-|\geqslant \delta(n-1))\\
&=\frac{F_2(G)}{T(G)}\left(1-\frac{1}{2(n-1)}\right)-\frac{1}{2\delta (n-1)}\sum_{i=1}^{n-1}\frac{n-\lambda_i}{\lambda_i} && (\eqref{delta-average}\ \text{and}\ \eqref{product-average})\\
&\geqslant \frac{F_2(G)}{T(G)}\left(1-\frac{1}{2(n-1)}\right)-\frac{1}{2\delta (n-1)}\cdot \frac{2h}{n-h-1} && \eqref{difference-upper-bound}\\
&\geqslant \frac{F_2(G)}{T(G)}\left(1-\frac{1}{2(n-1)}\right)-\frac{1}{2(n-h-1)(n-1)}\cdot \frac{2h}{n-h-1} &&   (\delta\geqslant n-h-1)\\
&= \frac{F_2(G)}{T(G)}\left(1-\frac{1}{2(n-1)}\right)-\frac{h}{(n-h-1)^2(n-1)}  \\
&\geqslant \frac{F_2(G)}{T(G)}\left(1-\frac{1}{2(n-1)}\right)-\frac{n/2}{(n/2-1)^2(n-1)} && \left(h\leqslant \frac{n}{2}\right) \\
&\geqslant\frac{F_2(G)}{T(G)}\left(1-\frac{1}{2(n-1)}\right)-\frac{3}{n^2}. && (n\geqslant 14)\\
\end{align*}
Since $\ln(Q_n)<\frac{1}{2}+\frac{5}{2n}-\frac{3}{2n^2}$ it is enough to prove that
$$\frac{F_2(G)}{T(G)}\left(1-\frac{1}{2(n-1)}\right)\geqslant \frac{1}{2}+\frac{5}{2n}+\frac{3}{2n^2}.$$
Since 
$$\frac{F_2(G)}{T(G)}\geqslant \left(1+\frac{6}{n}\right)\frac{(n-1)^2}{4m}$$
this is true if
\begin{equation} \label{m-upper-bound}
m\leqslant \left(1+\frac{6}{n}\right)\frac{(n-1)^2}{4}\left(1-\frac{1}{2(n-1)}\right)\left(\frac{1}{2}+\frac{5}{2n}+\frac{3}{2n^2}\right)^{-1}=\frac{2n^4 + 7n^3 - 27n^2 + 18n}{4(n^2 + 5n + 3)}.
\end{equation}
We have
$$\frac{2n^4 + 7n^3 - 27n^2 + 18n}{4(n^2 + 5n + 3)}=\binom{n}{2}-\frac{n}{4}-\frac{9(2n^2 - 3n)}{4(n^2 + 5n + 3)}>\binom{n}{2}-\frac{n}{4}-\frac{9}{2}.$$
Hence if $m\leqslant \binom{n}{2}-\frac{n}{4}-\frac{9}{2}$, equivalently $h\geqslant \frac{n}{4}+\frac{9}{2}$, then inequality \eqref{m-upper-bound} is also satisfied.

For $n\geqslant 21$ we have $\lfloor 0.49n+1/2\rfloor>\frac{n}{4}+\frac{9}{2}$. This completes the proof of Theorem~\ref{forest-tree-ratio-2-quantitative}. We also have $\left(\frac{1}{4}+\varepsilon\right)n\geqslant \frac{n}{4}+\frac{9}{2} $
whenever $n\geqslant \frac{9}{2\varepsilon}$. Thus, we can take $n_0(\varepsilon) = \max\left\{ 20,\left\lceil\frac{9}{2\varepsilon}\right\rceil \right\}$
to finish the proof of Theorem~\ref{forest-tree-ratio-2}. 
 
 \end{proof}

\section{Inclusion-exclusion for very dense graphs}
\label{incusion-exclusion-sect}

In this section, we prove Theorem~\ref{forest-tree-ratio-3} together with the following quantitative version.

\begin{Th} \label{forest-tree-ratio-3-quantitative}
Let $n\geqslant 10$. Let $G$ be a connected graph on $n$ vertices with $m=\binom{n}{2}-h$ edges, where $h\leqslant 0.49n$. Then $Q(G)\geqslant Q(K_n)$ with equality if and only if $G=K_n$. 
\end{Th}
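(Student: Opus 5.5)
The plan is to prove Theorem~\ref{forest-tree-ratio-3-quantitative} by inclusion--exclusion over the missing edges. Write $H=\overline{G}$ for the complement, which has $h\leqslant 0.49n$ edges. For a forest $S\subseteq E(K_n)$ let $T_n(S)$ and $F_n(S)$ be the numbers of spanning trees and spanning forests of $K_n$ that contain $S$. Then
$$T(G)=\sum_{S\subseteq E(H)}(-1)^{|S|}T_n(S),\qquad F(G)=\sum_{S\subseteq E(H)}(-1)^{|S|}F_n(S),$$
where only forests $S$ contribute. So $Q(G)\geqslant Q_n$ is equivalent to
$$\sum_{S\subseteq E(H)}(-1)^{|S|}\bigl(F_n(S)-Q_nT_n(S)\bigr)\geqslant 0 .$$
The $S=\emptyset$ term vanishes, so we must show that the terms with $|S|=1$, which carry the sign $-1$, dominate. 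In other words, we need $D(S):=Q_nT_n(S)-F_n(S)$ to be positive for a single edge and of controlled size for larger $S$.

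The first step is exact formulas. For trees this is classical: if $S$ has components of sizes $a_1,\dots,a_k$ (with $k=n-|S|$), then $T_n(S)=n^{k-2}\prod_i a_i$. For forests I would contract $S$. This gives a complete graph on $k$ vertices with edge multiplicities $a_ia_j$, and its forests are counted by summing over set partitions $\pi$ of the $k$ blocks:
$$F_n(S)=\sum_{\pi}\ \prod_{P\in\pi}\Bigl(\sum_{i\in P}a_i\Bigr)^{|P|-2}\prod_{i\in P}a_i .$$
This is the ``forests of contracted complete graphs'' formula. From it I would derive $F_n(S)=\prod_i a_i\cdot \Phi_{n,k}$, where $\Phi$ depends on the $a_i$ only mildly. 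Concretely, I would try to show
$$F_n(S)\big/\bigl(Q_nT_n(S)\bigr)=1-\frac{c_{|S|}}{n}+O\Bigl(\frac{|S|^2}{n^2}\Bigr)$$
with an explicit constant $c_1>0$, so that $D(e)\asymp T_n(e)/n$ for a single edge. This matches the heuristic that a typical edge is slightly less likely to lie in a random forest than in a random tree.

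The second step is the estimate. Since $T_n(S)\leqslant n^{n-2}(2/n)^{|S|}\cdot C^{|S|}$ (after normalizing by $\prod a_i\leqslant 2^{|S|}$ roughly), and $H$ contains at most $\binom{h}{j}$ sets of size $j$, the terms with $|S|=j$ contribute at most of order
$$n^{n-2}\binom{h}{j}\Bigl(\frac{2}{n}\Bigr)^{j}\cdot\frac{j^2}{n}.$$
The linear term is of order $n^{n-2}\cdot h\cdot(2/n)\cdot c_1/n$. Summing the tail gives a geometric-type series with ratio about $2h/n\leqslant 0.98<1$; this is exactly why the argument needs $h<n/2$. To make it rigorous I would pair $S$ with $S\cup\{e\}$ (a Bonferroni-type grouping) rather than bound absolute values crudely, because crude bounds lose constant factors. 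I would also compare against the corresponding alternating sum for $T(G)$, which has the same structure and is positive.

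The main obstacle is that the constants must be exact enough to beat the higher-order terms uniformly for all $n\geqslant 10$ and all $h\leqslant 0.49n$, not just asymptotically. This requires precise two-sided bounds on $\Phi_{n,k}$ in terms of the component sizes, together with the explicit bound on $Q_n$ from Lemma~\ref{Q_n-upper-bound} (whose proof also follows from the formula for $F_n(S)$ with $S=\emptyset$). For the equality case I would note that $h\geqslant1$ makes the linear term strictly positive, so $Q(G)>Q_n$ unless $G=K_n$. I expect the remaining small values of $n$ to need a finite computer check, as in Table~\ref{tab: table 1}.
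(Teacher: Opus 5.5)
Your framework is the paper's: inclusion--exclusion over the $h$ missing edges, contraction of $S$ to the weighted complete graph, $T_n(S)=n^{n-|S|-2}\prod_i a_i$, and the heuristic threshold $2h/n<1$. But the step that actually proves the theorem is missing, and the route you sketch for it would not work.

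Bounding the level-$j$ contributions in absolute value cannot succeed near $h=0.49n$. Take the complement to be a matching and use the true first-order defect $d(S):=Q_n-F_n(S)/T_n(S)\approx\frac{\sqrt{e}}{2n}|S|$. Then the level-$j$ term is about a constant times $n^{n-3}x^j/(j-1)!$ with $x=2h/n$. Since $\sum_{j\geqslant 2}x^j/(j-1)!=x(e^x-1)$, the tail already exceeds the $j=1$ term once $x>\ln 2$.

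What the paper proves instead is that the level sums $D_s=\sum_{|S|=s}T_n(S)\,d(S)$ are nonnegative and decreasing in $s$. Hence $F(G)-Q_nT(G)=\sum_{s\geqslant 1}(-1)^{s+1}D_s\geqslant D_1-D_2>0$. This rests on two inequalities absent from your plan:
\begin{itemize}
\item[(i)] $d(S)\geqslant\delta_n=Q_n-Q(2,1^{n-2})>0$ for every nonempty forest $S$. This comes from the closed Hermite-polynomial formula for $Q(\boldsymbol a)$, its Schur-convexity, and monotonicity along $(t+1,1^{n-t-1})$.
\item[(ii)] A multiplicative growth bound $d(S+e)\leqslant\gamma\, d(S)$. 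Together with $T_n(S+e)\leqslant\frac{2}{n}T_n(S)$ and double counting, it gives $D_{s+1}\leqslant\frac{2\gamma(h-s)}{n(s+1)}D_s$.
\end{itemize}
Your ``pair $S$ with $S\cup\{e\}$'' idea points in this direction. However, it only works in this aggregated form, and it needs (i) even to multiply the two bounds.

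The constant in (ii) is decisive. At $s=1$ one needs $\gamma<\frac{n}{h-1}$, i.e.\ essentially $\gamma\leqslant 2.04$ when $h\leqslant 0.49n$.
\begin{itemize}
\item For $s=1$ the paper proves $Q_n-Q(2,2,1^{n-4})\leqslant 2.04\,\delta_n$. Schur-convexity reduces to two disjoint edges; an asymptotic count handles $n\geqslant 80$, and exact rational computation handles smaller $n$.
\item For $2\leqslant s<n/2$ it uses the exact merge formula to show that one contraction costs at most $2\delta_n$, so $\gamma=3$ suffices.
\end{itemize}
What enters here is a lower bound on $\delta_n$ (from R\'enyi's $f_{n,2},f_{n,3}$), not the upper bound on $Q_n$.

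Your expansion $1-c_{|S|}/n+O(|S|^2/n^2)$ does not control this ratio, for three reasons:
\begin{itemize}
\item The first-order defect depends on the component structure, roughly $\frac{\sqrt{e}}{n}\sum_i(1-1/a_i)$, not only on $|S|$.
\item The error term is as large as the main term once $|S|$ is comparable with $n$.
\item An unspecified $O$-constant cannot give the statement for all $n\geqslant 10$.
\end{itemize}
If only your bound of order $j^2/n$ is available, the best one gets at $s=1$ is $\gamma\approx 4$, which reaches only $h\approx n/4$.

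Finally, no ``finite computer check as in Table~1'' closes the gap. That table belongs to the sparse regime. Here $n$ is unbounded and the complement is an arbitrary graph with up to $0.49n$ edges. The only computation that fits is a check of one scalar ratio per $n$, as in the paper.
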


Let $M=E(K_n)\setminus E(G)$, where $|M|=h\leqslant \max\left(0.49n, \left(\frac{1}{2}-\varepsilon\right)n\right)$. For $S\subseteq M$, let $F_S$ and $T_S$ be the numbers of spanning forests and
spanning trees of $K_n$ that contain every edge of $S$.  If $S$ contains a
cycle, then $F_S=T_S=0$. If $S$ is a forest, then let $Q_S=\frac{F_S}{T_S}$, and $Q_n=\frac{F(K_n)}{T(K_n)}=Q_{\emptyset}$.

Inclusion--exclusion gives
\[
  F(G)=\sum_{S\subseteq M}(-1)^{|S|}F_S,
  \qquad
  T(G)=\sum_{S\subseteq M}(-1)^{|S|}T_S.
\]
Let $d(S)=Q_n-Q_S$. Later we will show that $d(S)\geqslant 0$. We have
\begin{align*}
F(G)-Q_nT(G)&=\sum_{S\subseteq M}(-1)^{|S|}(F_S-Q_nT_S)\\
&=\sum_{\substack{S\subseteq M,\\ S\text{ a forest}}}(-1)^{|S|}(F_S-Q_nT_S)\\
&=\sum_{\substack{S\subseteq M,\\ S\text{ a forest}}}(-1)^{|S|}T_S(Q_S-Q_n)\\
&=\sum_{\substack{S\subseteq M,\\ S\text{ a forest}}}(-1)^{|S|+1}T_Sd(S).
\end{align*}
Since $F(K_n)=Q_nT(K_n)$, the $S=\varnothing$ term cancels and we get that
\begin{equation}\label{eq:IE-main}
  F(G)-Q_nT(G)=\sum_{s=1}^h(-1)^{s+1}D_s,
\end{equation}
where
\begin{equation}\label{eq:Ds-def}
  D_s=
  \sum_{\substack{S\subseteq M,\ |S|=s\\S\text{ a forest}}}
  T_Sd(S).
\end{equation}
We will show that the positive terms $D_s$ are strictly decreasing as long as they are nonzero. Since
$$F(G)-Q_nT(G)=(D_1-D_2)+(D_3-D_4)+\dots$$
this will immediately give that $F(G)-Q_nT(G)\geqslant 0$ with equality if and only if $G=K_n$. To better understand $d(S)=Q_n-Q_S$, observe that $F_S$ and $T_S$ depend only on the component sizes of $S$. Indeed, the forests and trees of $K_n$ containing the set $S$ are in bijection with the trees and forests of the multigraph obtained from $K_n$ by contracting each component of $S$. Let
\[
  \boldsymbol a=(a_1,\dots,a_m),\qquad a_i\in\mathbb N,
  \qquad a_1+\cdots+a_m=n.
\]
Let $K(\boldsymbol a)$ be the weighted complete graph on $[m]$ in which the
edge $ij$ has weight $a_i a_j$.  Equivalently, it is the multigraph with
$a_i a_j$ parallel edges between $i$ and $j$.  Let $\tau(\boldsymbol a)$ and
$\phi(\boldsymbol a)$ be its weighted spanning-tree and spanning-forest enumerators, and set
\[
  Q(\boldsymbol a)=\frac{\phi(\boldsymbol a)}{\tau(\boldsymbol a)}.
\]
Let $\boldsymbol a(S)$ denote the vector of component sizes of $S$. Then $F_S=\phi(\boldsymbol a(S))$, $T_S=\tau(\boldsymbol a(S))$ and $Q_S=Q(\boldsymbol a(S))$.

By the Cayley-Hurwitz formula we have
\begin{equation}
\tau(\boldsymbol a)=n^{m-2}\prod_{i=1}^m a_i
\end{equation}
This implies that if $S$ is an $s$-edge forest and an edge $e\in M\setminus S$ joins two components of $S$, of sizes $a$ and $b$, then
\begin{equation}\label{eq:tree-contraction-ratio}
  \frac{T_{S+e}}{T_S}
  =\frac{a+b}{nab}
  =\frac{1}{n}\left(\frac{1}{a}+\frac{1}{b}\right)
  \leqslant\frac{2}{n}.
\end{equation}
Later we will show that if $|S|\leqslant\frac{n}{2}-1$, then $d(S+e)\leqslant \gamma d(S)$, where $\gamma=3$ for $|S|\geqslant 2$, and  $\gamma=2.04$ for $|S|=1$. In the case $|S|=1$ we can also take $\gamma=2+\varepsilon$ if $n$ is large enough. Using the fact that every $(s+1)$-edge forest contains exactly $s+1$ forests of size $s$, this implies that
\begin{align*}
  (s+1)D_{s+1}
  &=\sum_{\substack{|S|=s,\ S\text{ a forest}}}
    \ \sum_{\substack{e\in M\setminus S\\S+e\text{ a forest}}}
      T_{S+e}d(S+e)\\
&\leqslant \sum_{\substack{|S|=s,\ S\text{ a forest}}}
    \ \sum_{\substack{e\in M\setminus S\\S+e\text{ a forest}}}
      \frac{2}{n}T_{S}\cdot \gamma d(S)\\
&\leqslant\frac{2\gamma}{n}
    \sum_{\substack{|S|=s,\ S\text{ a forest}}}
      (h-s)T_Sd(S)\\
&=\frac{2\gamma}{n}(h-s)D_s.
\end{align*}
Therefore, for $s\geqslant 1$ we get that
\begin{equation}\label{eq:Ds-ratio}
  \frac{D_{s+1}}{D_s}
  \leqslant\frac{2\gamma(h-s)}{n(s+1)}
\end{equation}
This means that if $s\geqslant 5$ and $h\leqslant n+5$, then $D_s\geqslant D_{s+1}$.
In order to have $D_3\geq D_4$ we need $h\leqslant \frac{2}{3}n+3$, but the real bottleneck is clearly the case $D_1\geqslant D_2$. Since in this case we have a better $\gamma$ we get that $\frac{D_2}{D_1}<1$ if $h<\frac{1}{2.04}n+1$.

In summary, to prove that $Q(G)\geqslant Q(K_n)$ for a graph $G$ with more than $\binom{n}{2}-(\frac{1}{2}-\varepsilon)n$ edges, it is enough to prove that (i) $d(S)\geqslant 0$ for all $S$, (ii) $d(S+e)\leqslant 3d(S)$ whenever $2\leqslant |S|\leqslant \frac{n}{2}-1$, and $d(S+e)\leqslant (2+\varepsilon)d(S)$ whenever $|S|=1$. Since we understand $\tau(\boldsymbol a)$ rather well, the problem essentially boils down to the analysis of $\phi(\boldsymbol a)$. This is the content of the next section.

\subsection{Weighted complete graphs}

As before, for
$$\boldsymbol a=(a_1,\dots,a_m),\qquad a_i\in\mathbb N,
  \qquad a_1+\cdots+a_m=n,
$$
let $K(\boldsymbol a)$ be the weighted complete graph on $[m]$ in which the
edge $ij$ has weight $a_i a_j$.  Equivalently, it is the multigraph with
$a_i a_j$ parallel edges between $i$ and $j$.  Recall that $\tau(\boldsymbol a)$ and
$\phi(\boldsymbol a)$ denote its weighted spanning-tree and spanning-forest
enumerators, and set
\[
  Q(\boldsymbol a)=\frac{\phi(\boldsymbol a)}{\tau(\boldsymbol a)}.
\]

Let $\He_r(x)$ denote the probabilists' Hermite polynomials:
\[
  \He_{-1}(x)=0,\qquad \He_0(x)=1,\qquad
  \He_{r+1}(x)=x\He_r(x)-r\He_{r-1}(x)\ \ (r\geqslant 0).
\]
The first few Hermite polynomials are
$$\He_1(x)=x,\ \ \He_2(x)=x^2-1,\ \ \He_3(x)=x^3-3x,\ \ \He_4(x)=x^4-6x^2+3,$$
$$\He_5(x)=x^5-10x^3+15x,\ \ \He_6(x)=x^6-15x^4+45x^2-15.$$
The polynomial $\He_n(x)$ is also the matching polynomial of the complete graph $K_n$.

\begin{Lemma}[Weighted Cayley--Hurwitz formulas]\label{lem:weighted-formulas}
For $\boldsymbol a$ as above,
\begin{align}
  \tau(\boldsymbol a)
    &=n^{m-2}\prod_{i=1}^m a_i, \label{eq:weighted-tree}\\
  \phi(\boldsymbol a)
    &=1+\sum_{j=2}^m \He_{j-2}(n)e_j(\boldsymbol a),
      \label{eq:weighted-forest}
\end{align}
where $e_j$ is the $j$th elementary symmetric polynomial.
Consequently, with $x_i=a_i^{-1}$,
\begin{equation}\label{eq:Q-formula}
  Q(\boldsymbol a)
  =n^{2-m}\left(
      e_m(\boldsymbol x)
      +\sum_{k=0}^{m-2}\He_{m-k-2}(n)e_k(\boldsymbol x)
    \right).
\end{equation}
\end{Lemma}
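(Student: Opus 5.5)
The tree formula \eqref{eq:weighted-tree} is the classical Cayley--Hurwitz (multivariate Cayley) formula. I will derive it from the generating identity
\[
\sum_{T}\prod_{i}y_i^{\deg_T(i)}=y_1\cdots y_m\,(y_1+\dots+y_m)^{m-2}.
\]
Here the sum runs over labelled trees on $[m]$. Substituting $y_i=a_i$ gives $\prod_i a_i\cdot n^{m-2}$, because the weight of a tree in $K(\boldsymbol a)$ is $\prod_{ij\in T}a_ia_j=\prod_i a_i^{\deg_T(i)}$.

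For the forest formula I will group spanning forests by their vertex partition into blocks $B_1,\dots,B_k$. Applying the tree formula inside each block, a block $B$ contributes $\prod_{i\in B}a_i\cdot s_B^{|B|-2}$, where $s_B=\sum_{i\in B}a_i$. This gives
\[
\phi(\boldsymbol a)=\sum_{\pi}\prod_{B\in\pi}\Bigl(\prod_{i\in B}a_i\Bigr)s_B^{|B|-2}.
\]
A singleton block contributes $a_i\cdot a_i^{-1}=1$. The coefficient extraction in this formula is not transparent, so I will instead use a generating-function and Abel-identity style argument. First, $\phi$ is multiaffine in $\boldsymbol a$ once $n$ is treated as an independent parameter. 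I will prove the claim for the polynomial $\Phi(\boldsymbol a)=\sum_{F}\prod_{ij\in F}a_ia_j$, in which every vertex appears with power at most its degree. The claim is that $\Phi$, evaluated under the constraint $\sum a_i=n$, equals $\sum_j \He_{j-2}(n)e_j(\boldsymbol a)$ plus the constant $1$.

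The cleanest route is to compute $\Phi$ in the special case where all $a_i=1$ for $i\in J$ and all other $a_i$ are $0$, with $|J|=j$. Since the claimed right-hand side is symmetric and multiaffine, knowing its values at such $0/1$ vectors determines the coefficients. The difficulty is that $\Phi$ itself is not multiaffine in $\boldsymbol a$: it has powers $a_i^{\deg}$. The reduction to $e_j$ therefore only holds after imposing $\sum a_i=n$, and this must be handled directly.

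So I will proceed combinatorially. Interpret $\phi(\boldsymbol a)$ as counting forests $F$ of $K_n$ that contain a fixed forest $S$ whose components have sizes $a_1,\dots,a_m$, counted modulo the edges of $S$. Concretely, contracting the components of $S$ shows that $\phi(\boldsymbol a)$ is the number of edge sets $A\subseteq E(K_n)\setminus S$ such that $A\cup S$ is acyclic. Next I will expand the set of component labels touched. Write $\phi(\boldsymbol a)=\sum_{J\subseteq[m]}c_J$, where $c_J$ counts the forests whose nontrivial blocks cover exactly the set $J$ of components. By the block formula, $c_J$ is $\prod_{i\in J}a_i$ times a polynomial in the $a_i$, $i\in J$, and in $n$. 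The key step is to show that this polynomial depends only on $|J|$ and $n$, and that it equals $\He_{|J|-2}(n)$.

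To get this I will sum over the set partitions of $J$ with no singletons, using the block formula. Taking exponential generating functions in an auxiliary variable, the no-singleton blocks give $\exp\bigl(\sum_{k\geqslant 2}(\cdot)\bigr)$. Abel's identity
\[
\sum_{B}\ \prod_{i\in B}a_i\, s_B^{|B|-2}\cdots=\ldots
\]
combined with inclusion--exclusion over singletons should collapse the dependence on $s_B$ to dependence on $n$. I expect this identity to be the main obstacle. My first attempt will be to verify the recursion $\He_{r+1}(n)=n\He_r(n)-r\He_{r-1}(n)$ directly for $c_J$: remove the component $i\in J$ with a chosen marker, and split according to whether $i$ is a leaf attached to a block or lies in a two-vertex block. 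Failing that, I will prove the generating identity
\[
\sum_{\boldsymbol a}\phi(\boldsymbol a)\,(\cdot)=\exp\bigl(nt-t^2/2\bigr)\cdots
\]
which matches the Hermite exponential generating function $e^{nt-t^2/2}$.

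Finally, formula \eqref{eq:Q-formula} follows by dividing \eqref{eq:weighted-forest} by \eqref{eq:weighted-tree}. We have $e_j(\boldsymbol a)/\prod_i a_i=e_{m-j}(\boldsymbol x)$ and $1/\prod_i a_i=e_m(\boldsymbol x)$. Reindexing with $k=m-j$ gives $\He_{m-k-2}(n)e_k(\boldsymbol x)$ for $0\leqslant k\leqslant m-2$, together with the overall factor $n^{2-m}$.
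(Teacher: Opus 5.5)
Your derivation of \eqref{eq:weighted-tree}, the block-partition expansion of $\phi(\boldsymbol a)$, and the final division giving \eqref{eq:Q-formula} are all fine. The forest formula \eqref{eq:weighted-forest}, however, is the real content of the lemma, and it is not proved; moreover, the key step you propose is false. Your $c_J$ is the sum over singleton-free set partitions of $J$ of $\prod_{B}\bigl(\prod_{i\in B}a_i\bigr)s_B^{|B|-2}$, so it is a polynomial in $(a_i)_{i\in J}$ alone. It therefore cannot equal $\prod_{i\in J}a_i\cdot\He_{|J|-2}(n)$, which involves $n=a_1+\dots+a_m$, unless $J=[m]$, and even then it fails. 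For $m=4$ and $J=\{1,2,3\}$ the only singleton-free partition is $J$ itself, so $c_J=a_1a_2a_3(n-a_4)$ rather than $n\,a_1a_2a_3$. For $J=[4]$ the single block and the three pairings give $c_{[4]}=(n^2+3)e_4$ rather than $(n^2-1)e_4$. The formula appears only after summing: $\sum_{|J|=3}c_J=ne_3-4e_4$, and the surplus $4e_4$ in $c_{[4]}$ cancels this deficit, giving $1+e_2+ne_3+(n^2-1)e_4$. So \eqref{eq:weighted-forest} is a global cancellation between different $J$. Neither a per-$J$ Hermite recursion nor a per-$J$ Abel identity can succeed, and your fallback ``generating identity'' is left with unspecified factors. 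Nothing in the proposal actually establishes the formula.

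What is missing is a mechanism that ties each subset to its complement, since that is the only way the constraint $\sum_i a_i=n$ can enter. The paper starts from the rooted-tree series $S=\sum_i a_iz_i\exp(a_iS)$ and the unrooted series $\mathcal U=\sum_i z_i\exp(a_iS)-S^2/2$ (vertices minus edges), with $\phi(\boldsymbol a)=[z_{[m]}]\exp(\mathcal U)$. Working modulo $z_i^2$ gives $\exp(\mathcal U)\equiv\sum_{J}z_J\exp(a_JS-S^2/2)$. For $K=J^c$ this forces $S$ to become the series $S_K$ built only from the variables in $K$. Lagrange inversion applied to $f(t)=\exp(a_{K^c}t-t^2/2)$ then produces $(a_{K^c}-t)\exp(nt-t^2/2)$, because $a_K+a_{K^c}=n$ (here $a_K=\sum_{i\in K}a_i$). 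This is exactly where the Hermite generating function comes from. Even then, the individual contributions $p_K\bigl(a_{K^c}\He_{k-1}(n)-(k-1)\He_{k-2}(n)\bigr)$, with $p_K=\prod_{i\in K}a_i$, are not of the form product times Hermite. They telescope to \eqref{eq:weighted-forest} only after summing over $|K|=k$ using $\sum_{|K|=k}p_Ka_{K^c}=(k+1)e_{k+1}(\boldsymbol a)$. Some argument of this sort, tracking the complement explicitly, is needed to close the gap.
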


\begin{proof}
For a tree $T$ on $[m]$,
\[
  \prod_{ij\in E(T)}a_i a_j=\prod_{i=1}^m a_i^{\deg_T(i)}.
\]
Summing by Pr\"ufer codes gives
\[
  \sum_T\prod_{i=1}^m a_i^{\deg_T(i)}
  =\left(\prod_{i=1}^m a_i\right)
    \left(\sum_{i=1}^m a_i\right)^{m-2},
\]
which proves \eqref{eq:weighted-tree}.

For \eqref{eq:weighted-forest}, partition a forest into its tree components.
If $B\subseteq[m]$ is a component, weighted Cayley gives
\[
  \sum_{\substack{T\text{ a tree}\\V(T)=B}}
       \prod_{ij\in E(T)}a_i a_j
  =\left(\sum_{i\in B}a_i\right)^{|B|-2}
     \prod_{i\in B}a_i,
\]
with the natural value $1$ when $|B|=1$.  Hence
\begin{equation}\label{eq:partition-sum}
  \phi(\boldsymbol a)
  =\sum_{\pi\in\Pi([m])}
      \prod_{B\in\pi}
      \left(\sum_{i\in B}a_i\right)^{|B|-2}
      \prod_{i\in B}a_i.
\end{equation}
We now evaluate the partition sum in full detail.  Introduce commuting
indeterminates $z_1,\dots,z_m$.  For $A\subseteq[m]$, write
\[
  z_A=\prod_{i\in A}z_i,\qquad
  a_A=\sum_{i\in A}a_i,\qquad
  p_A=\prod_{i\in A}a_i.
\]
Let $S=S(\boldsymbol z)$ be the unique formal power series with zero constant
term satisfying
\begin{equation}\label{eq:rooted-tree-functional}
  S=\sum_{i=1}^m a_i z_i\exp(a_iS),
\end{equation}
and put
\[
  R_i=z_i\exp(a_iS).
\]
The squarefree coefficient $[z_A]R_i$, for $i\in A$, is the total weight of
trees on $A$ rooted at $i$.  Indeed, after deleting the root $i$, a rooted
tree becomes an unordered family of rooted trees attached to $i$.  If one of
those subtrees is rooted at $j$, joining it to $i$ contributes the edge
weight $a_i a_j$.  Hence the generating series for one possible branch at
$i$ is
\[
  \sum_{j=1}^m a_i a_j R_j=a_iS,
\]
and the exponential in the definition of $R_i$ forms an unordered family of
such branches.  Terms in which two branches use a common label contain some
$z_j^2$ and therefore do not contribute to squarefree coefficients.  This
proves the claim about $R_i$.

Set
\begin{equation}\label{eq:unrooted-tree-series}
  \mathcal U=\sum_{i=1}^m R_i-\frac{S^2}{2}.
\end{equation}
Its squarefree coefficient $[z_A]\mathcal U$ is the total weight of
unrooted trees on $A$.  To see this, the first term in
\eqref{eq:unrooted-tree-series} counts every unrooted tree on $A$ once for
each of its $|A|$ possible distinguished vertices.  On the other hand,
$S^2/2$ counts it once for each distinguished edge: cutting that edge gives
an unordered pair of rooted trees, and the two factors $a_iR_i$ and $a_jR_j$
provide precisely the missing edge weight $a_i a_j$.  Since a tree on $A$
has $|A|-1$ edges, the difference counts it exactly once.

A forest is an unordered set of tree components.  Consequently,
\begin{equation}\label{eq:forest-coefficient}
  \phi(\boldsymbol a)=[z_{[m]}]\exp(\mathcal U).
\end{equation}
We next compute this coefficient.  Let
\[
  B(t)=\sum_{i=1}^m z_i\exp(a_it),
\]
so that $\sum_iR_i=B(S)$.  Since only squarefree coefficients are relevant,
we may work modulo the ideal $(z_1^2,\dots,z_m^2)$.  Each
$z_i\exp(a_iS)$ has a factor $z_i$, and hence
\begin{align}
  \exp(\mathcal U)
  &=\exp\left(B(S)-\frac{S^2}{2}\right) \notag\\
  &\equiv
    \exp\left(-\frac{S^2}{2}\right)
    \prod_{i=1}^m\left(1+z_i\exp(a_iS)\right) \notag\\
  &=\sum_{J\subseteq[m]}
      z_J\exp\left(a_JS-\frac{S^2}{2}\right).
      \label{eq:forest-squarefree-expansion}
\end{align}
For $K\subseteq[m]$, let $S_K$ be the unique solution with zero constant term
of
\begin{equation}\label{eq:restricted-functional}
  S_K=\sum_{i\in K}a_i z_i\exp(a_iS_K).
\end{equation}
In the term of \eqref{eq:forest-squarefree-expansion} indexed by
$J=[m]\setminus K$, the factor $z_J$ already supplies all variables whose
indices lie in $J$.  Thus any further occurrence of one of these variables
cannot contribute to $z_{[m]}$; equivalently, we set $z_j=0$ for $j\in J$.
By uniqueness in \eqref{eq:rooted-tree-functional}, this changes $S$ into
$S_K$.  Therefore \eqref{eq:forest-coefficient} becomes
\begin{equation}\label{eq:forest-lagrange-start}
  \phi(\boldsymbol a)
  =\sum_{K\subseteq[m]}
     [z_K]\exp\left(a_{K^c}S_K-\frac{S_K^2}{2}\right).
\end{equation}

We evaluate the summands by the one-variable Lagrange--Buerhmann formula.
Fix a nonempty $K$ of size $k$, and put
\[
  \Psi_K(t)=\sum_{i\in K}a_i z_i\exp(a_it),
\]
so that $S_K=\Psi_K(S_K)$.  For any formal power series $f$, Lagrange
inversion gives
\[
  f(S_K)=f(0)+\sum_{r\geq1}\frac1r
       [t^{r-1}]f'(t)\Psi_K(t)^r.
\]
The monomial $z_K$ has total degree $k$, so only the term $r=k$ can
contribute to its coefficient.  Moreover,
\[
  [z_K]\Psi_K(t)^k
  =k!\,p_K\exp(a_Kt),
\]
because the $k$ factors must select each index in $K$ exactly once.  Hence
\begin{equation}\label{eq:squarefree-lagrange}
  [z_K]f(S_K)
  =(k-1)!\,p_K[t^{k-1}]f'(t)\exp(a_Kt).
\end{equation}
Apply this with
\[
  f(t)=\exp\left(a_{K^c}t-\frac{t^2}{2}\right).
\]
Since $a_K+a_{K^c}=n$, equation \eqref{eq:squarefree-lagrange} yields
\begin{align*}
 &[z_K]\exp\left(a_{K^c}S_K-\frac{S_K^2}{2}\right)\\
 &\quad=(k-1)!\,p_K[t^{k-1}]
       (a_{K^c}-t)\exp\left(nt-\frac{t^2}{2}\right).
\end{align*}
The exponential generating function of the probabilists' Hermite
polynomials is
\[
  \exp\left(nt-\frac{t^2}{2}\right)
  =\sum_{r\geq0}\He_r(n)\frac{t^r}{r!}.
\]
Consequently,
\begin{equation}\label{eq:subset-contribution}
  [z_K]\exp\left(a_{K^c}S_K-\frac{S_K^2}{2}\right)
  =p_K\left(
      a_{K^c}\He_{k-1}(n)
      -(k-1)\He_{k-2}(n)
    \right),
\end{equation}
where the second term is understood as $0$ when $k=1$.

The term $K=\varnothing$ in \eqref{eq:forest-lagrange-start} equals $1$.
Summing \eqref{eq:subset-contribution} over all $K$ of a fixed size $k$
and using
\begin{equation}\label{eq:subset-elementary-identity}
  \sum_{\substack{K\subseteq[m]\\|K|=k}}p_Ka_{K^c}
  =(k+1)e_{k+1}(\boldsymbol a)
\end{equation}
gives
\begin{equation*}
  \phi(\boldsymbol a)
  =1+\sum_{k=1}^m\Bigl(
       (k+1)\He_{k-1}(n)e_{k+1}(\boldsymbol a)
       -(k-1)\He_{k-2}(n)e_k(\boldsymbol a)
     \Bigr),
\end{equation*}
with $e_{m+1}=0$.  Identity
\eqref{eq:subset-elementary-identity} is immediate: after expanding
$a_{K^c}=\sum_{i\notin K}a_i$, each $(k+1)$-subset contributes its product
once for each of its $k+1$ elements.  Finally, reindexing the first sum gives
\begin{equation*}
  \phi(\boldsymbol a)
  =1+\sum_{j=2}^m
       \left(j-(j-1)\right)\He_{j-2}(n)e_j(\boldsymbol a)
  =1+\sum_{j=2}^m\He_{j-2}(n)e_j(\boldsymbol a),
\end{equation*}
which is \eqref{eq:weighted-forest}.
Finally, divide \eqref{eq:weighted-forest} by
\eqref{eq:weighted-tree} and use
$\frac{e_j(\boldsymbol a)}{\prod_i a_i}
  =e_{m-j}(\boldsymbol x)$
to obtain \eqref{eq:Q-formula}.
\end{proof}

For the rest of the proof, write
\[
  H_r=\He_r(n),\qquad h_r=\frac{H_r}{n^r}.
\]

\begin{Lemma}\label{lem:Hermite-bounds}
For $1\leqslant r\leqslant n$,
\begin{equation}\label{eq:Hermite-ratio}
  n-1\leqslant \frac{H_r}{H_{r-1}}\leqslant n.
\end{equation}
Consequently,
\begin{equation}\label{eq:h-monotone}
  1=h_0=h_1\geqslant h_2\geqslant\cdots>0,
\end{equation}
and
\begin{equation}\label{eq:h-exponential}
  h_r\leq
  \prod_{j=1}^{r-1}\left(1-\frac{j}{n^2}\right)
  \leqslant \exp\left(-\frac{r(r-1)}{2n^2}\right).
\end{equation}
\end{Lemma}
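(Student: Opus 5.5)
We prove \eqref{eq:Hermite-ratio} by induction on $r$, using the three-term recurrence $H_{r}=nH_{r-1}-(r-1)H_{r-2}$. Set $\rho_r=H_r/H_{r-1}$. For $r=1$ we have $\rho_1=H_1/H_0=n$, which lies in $[n-1,n]$. For $r\geqslant 2$, suppose $H_{r-1}>0$ and $\rho_{r-1}\in[n-1,n]$. Dividing the recurrence by $H_{r-1}$ gives
$$\rho_r=n-\frac{r-1}{\rho_{r-1}}.$$
The upper bound $\rho_r\leqslant n$ follows immediately, since $\rho_{r-1}>0$. For the lower bound we need $(r-1)/\rho_{r-1}\leqslant 1$. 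Since $r\leqslant n$, we have $r-1\leqslant n-1\leqslant \rho_{r-1}$, so this holds. In particular $\rho_r\geqslant n-1>0$ once $n\geqslant 2$, which also gives $H_r>0$ and closes the induction. The case $n=1$ is trivial: only $r=1$ occurs, and $\rho_1=1\in[0,1]$. The only real subtlety is keeping positivity inside the induction, so that we are allowed to divide; this is exactly where the hypothesis $r\leqslant n$ enters.

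For \eqref{eq:h-monotone}, note that $h_r/h_{r-1}=\rho_r/n\in[1-1/n,1]$. This gives monotonicity and positivity for $r\leqslant n$. We have $h_0=1$, and $h_1=H_1/n=1$. (For $r>n$ the statement \eqref{eq:h-monotone} is only needed in the range where the ratio bounds hold, so we read it on $0\leqslant r\leqslant n$.)

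For \eqref{eq:h-exponential}, we sharpen the ratio estimate by induction: we claim that for $1\leqslant j\leqslant n$,
$$\rho_{j+1}\leqslant n-\frac{j}{n}.$$
Indeed, $\rho_{j+1}=n-j/\rho_j$, and $\rho_j\leqslant n$ by the previous step, so $j/\rho_j\geqslant j/n$. Hence $h_{j+1}/h_j=\rho_{j+1}/n\leqslant 1-j/n^2$. Telescoping from $h_1=1$ gives
$$h_r\leqslant\prod_{j=1}^{r-1}\left(1-\frac{j}{n^2}\right).$$
Applying $1-x\leqslant e^{-x}$ to each factor and using $\sum_{j=1}^{r-1}j=r(r-1)/2$ yields the exponential bound.

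This argument requires $\rho_j\leqslant n$ for $j\leqslant r-1$ and $r-1\leqslant n$, both covered by the range $1\leqslant r\leqslant n$. If needed, the bound at $r=n+1$ follows in the same way, since we only use $\rho_{n}\leqslant n$.
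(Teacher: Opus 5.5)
Your proof is correct and follows the paper's argument. The ratio bounds come from the same induction on $R_r=n-(r-1)/R_{r-1}$, and your step $\rho_{j+1}\leqslant n-j/n$ (obtained from $\rho_j\leqslant n$) is exactly the paper's estimate $h_{r}\leqslant (1-\frac{r-1}{n^2})h_{r-1}$ (obtained from $h_{r-2}\geqslant h_{r-1}$), only divided through by $h_{r-1}$; you also make explicit the positivity and the restriction to $r\leqslant n$ that the paper leaves implicit.
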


\begin{proof}
Let $R_r=H_r/H_{r-1}$.  The Hermite recurrence gives
\[
  R_r=n-\frac{r-1}{R_{r-1}}.
\]
Starting from $R_1=n$, induction shows that $n-1\leqslant R_r\leqslant n$ for
$1\leqslant r\leqslant n$.  This proves \eqref{eq:Hermite-ratio} and
\eqref{eq:h-monotone}.  Moreover,
\[
  h_r=h_{r-1}-\frac{r-1}{n^2}h_{r-2}
  \leqslant \left(1-\frac{r-1}{n^2}\right)h_{r-1},
\]
because $h_{r-2}\geqslant h_{r-1}$.  Iteration, followed by
$1-x\leqslant \ee^{-x}$, proves \eqref{eq:h-exponential}.
\end{proof}

Recall that a function $f:\mathbb{R}^d\to \mathbb{R}$ is Schur-convex if $f(\boldsymbol x)\geqslant f(\boldsymbol y)$ whenever $\boldsymbol x$ majorizes $\boldsymbol y$. The Schur-Ostrowski criterion says that if $f$ is symmetric and all first partial derivatives exist, then $f$ is Schur-convex if and only if for all $1\leqslant i,j\leqslant d$ we have
$$(x_i-x_j)\left(\frac{\partial f}{\partial x_i}-\frac{\partial f}{\partial x_j}\right)\geq 0$$
for all $\boldsymbol x \in \mathbb{R}^d$.

\begin{Lemma}\label{lem:Schur-convex}
For fixed $m$ and $n$, the function $Q(a_1,\dots,a_m)$ is Schur-convex on the
positive orthant intersected with $a_1+\cdots+a_m=n$.
\end{Lemma}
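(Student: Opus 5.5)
The plan is to apply the Schur--Ostrowski criterion to the explicit formula \eqref{eq:Q-formula} of Lemma~\ref{lem:weighted-formulas}. First I extend $Q$ off the hyperplane. I regard $n$ in \eqref{eq:Q-formula} as a \emph{fixed parameter}, not as the function $a_1+\cdots+a_m$. This gives a symmetric, smooth function
\[
\widetilde Q(a_1,\dots,a_m)=n^{2-m}\Bigl(e_m(\boldsymbol x)+\sum_{k=0}^{m-2}H_{m-k-2}\,e_k(\boldsymbol x)\Bigr),\qquad x_i=a_i^{-1},
\]
on the whole positive orthant. It agrees with $Q$ on the slice $a_1+\cdots+a_m=n$. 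Majorization only compares vectors with equal coordinate sums, so Schur-convexity of $\widetilde Q$ on the orthant implies the claim on the slice.

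Next I compute the Schur--Ostrowski expression term by term. By the chain rule, $\partial/\partial a_i=-x_i^2\,\partial/\partial x_i$, and $\partial e_k(\boldsymbol x)/\partial x_i=e_{k-1}(\boldsymbol x\setminus x_i)$. Fix $i\neq j$ and let $E_r$ be the $r$th elementary symmetric polynomial of the $m-2$ variables other than $x_i,x_j$; these are nonnegative. Expanding $e_{k-1}(\boldsymbol x\setminus x_i)=x_jE_{k-2}+E_{k-1}$, and symmetrically for $j$, gives
\[
x_i^2\frac{\partial e_k}{\partial x_i}-x_j^2\frac{\partial e_k}{\partial x_j}
=(x_i-x_j)\bigl[(x_i+x_j)E_{k-1}+x_ix_jE_{k-2}\bigr].
\]
Hence
\[
\frac{\partial \widetilde Q}{\partial a_i}-\frac{\partial \widetilde Q}{\partial a_j}=-(x_i-x_j)\,P_{ij}(\boldsymbol x),
\]
where $P_{ij}$ is a linear combination of the nonnegative brackets above with coefficients $n^{2-m}$ (from the $e_m$ term) and $n^{2-m}H_{m-k-2}$.

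Then I sign the result. Since $x_i-x_j=(a_j-a_i)/(a_ia_j)$, we get
\[
(a_i-a_j)\Bigl(\frac{\partial \widetilde Q}{\partial a_i}-\frac{\partial \widetilde Q}{\partial a_j}\Bigr)=\frac{(a_i-a_j)^2}{a_ia_j}\,P_{ij}(\boldsymbol x).
\]
This is nonnegative as soon as every coefficient of $P_{ij}$ is nonnegative, and then the Schur--Ostrowski criterion finishes the proof.

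The one point needing care, and the main obstacle, is the positivity of the Hermite values $H_r=\He_r(n)$ for $0\le r\le m-2$; note that $\He_r(x)$ has negative values for small $x$, so this is not automatic. Here $m\le n$ because the $a_i$ are positive integers summing to $n$, so $r\le n-2$. Lemma~\ref{lem:Hermite-bounds} then gives $H_r/H_{r-1}\ge n-1>0$ for $1\le r\le n$, and with $H_0=1$ this yields $H_r>0$ throughout the needed range. (Positivity also holds for non-integer positive $a_i$, since $n$ is held fixed and only the number $m$ of variables matters.)
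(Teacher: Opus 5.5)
Your proof is correct and matches the paper's argument: the paper also uses \eqref{eq:Q-formula} and Lemma~\ref{lem:Hermite-bounds} to reduce to Schur-convexity of $e_k(a_1^{-1},\dots,a_m^{-1})$, derives the same identity $\frac{(x_i-x_j)^2}{x_ix_j}\bigl((x_i+x_j)E_{k-1}+x_ix_jE_{k-2}\bigr)\geqslant 0$, and then applies Schur--Ostrowski. Your version just makes explicit two points the paper leaves implicit: extending off the hyperplane with $n$ held fixed, and the positivity of $H_r$ for $r\leqslant m-2\leqslant n-2$.
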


\begin{proof}
It is enough, by \eqref{eq:Q-formula} and Lemma~\ref{lem:Hermite-bounds}, to
show that
\[
  (a_1,\dots,a_m)\longmapsto
  e_k(a_1^{-1},\dots,a_m^{-1})
\]
is Schur-convex.  Put $x_i=a_i^{-1}$, and let $E_r$ denote the $r$th
elementary symmetric polynomial in all variables except $x_i,x_j$ with the convention that $E_0=1$ and $E_j=0$ if $j<0$ or $j>m-2$.  Then
\[
  \frac{\partial e_k(\boldsymbol x)}{\partial a_i}
  =-x_i^2(E_{k-1}+x_jE_{k-2}).
\]
It follows that
\begin{equation*}
 (a_i-a_j)
 \left(
   \frac{\partial e_k(\boldsymbol x)}{\partial a_i}
   -\frac{\partial e_k(\boldsymbol x)}{\partial a_j}
 \right) 
 =
 \frac{(x_i-x_j)^2}{x_ix_j}
 \left((x_i+x_j)E_{k-1}+x_ix_jE_{k-2}\right)\geqslant0.
\end{equation*}
The Schur--Ostrowski criterion completes the proof.
\end{proof}

For $0\leqslant t\leqslant n-1$, define
\[
  P_t=(t+1,1^{\,n-t-1}).
\]
A positive integral vector of length $n-t$ and total sum $n$ is majorized by
$P_t$.  Lemma~\ref{lem:Schur-convex} therefore gives
\begin{equation}\label{eq:majorization-bound}
  Q(\boldsymbol a)\leqslant Q(P_t)
\end{equation}
for every such vector $\boldsymbol a$.

We next compare $Q$ before and after merging two coordinates.

\begin{Lemma}[Exact merge formula]\label{lem:merge-formula}
Let
\[
  s=a+b,\qquad u=\frac1a+\frac1b,
\]
and let $\boldsymbol c=(c_1,\dots,c_\ell)$ satisfy
$a+b+c_1+\cdots+c_\ell=n$.  Put
$$E_j=e_j(c_1^{-1},\dots,c_\ell^{-1}),$$
with the convention that $E_0=1$ and $E_j=0$ if $j<0$ or $j>\ell$.
If $\ell\geqslant 1$, then, with $H_{-1}=0$,
\begin{equation}
 n^\ell\bigl(Q(a,b,\boldsymbol c)-Q(s,\boldsymbol c)\bigr)=
 \sum_{j=0}^{\ell-1}
 \left[
   uH_{\ell-j-1}
   -\left(\ell-j-1+\frac{n-u}{s}\right)H_{\ell-j-2}
 \right]E_j
+\left(1-\frac{n-u}{s}\right)E_\ell.
 \label{eq:merge-formula}
\end{equation}
If $\ell=0$, then $Q(a,b)-Q(s)=\frac{1}{ab}$.
\end{Lemma}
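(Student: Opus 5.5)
Both sides will be computed directly from formula \eqref{eq:Q-formula} of Lemma~\ref{lem:weighted-formulas}, and the resulting difference will be organized by the elementary symmetric polynomials $E_j$ in $\boldsymbol c$. Write $x_a=1/a$, $x_b=1/b$. For the vector $(a,b,\boldsymbol c)$ of length $m=\ell+2$, the variable set is $\{x_a,x_b\}\cup\{c_i^{-1}\}$, so
\[
e_k(x_a,x_b,\boldsymbol c^{-1})=E_k+uE_{k-1}+\tfrac{1}{ab}E_{k-2}.
\]
For $(s,\boldsymbol c)$ of length $\ell+1$, the set is $\{1/s\}\cup\{c_i^{-1}\}$, so $e_k=E_k+\tfrac1sE_{k-1}$. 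The key observation is that $\frac{1}{ab}=\frac{u}{s}$, since $u=\frac{a+b}{ab}$; this allows everything to be written in terms of $u$ and $s$ only.

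First I expand $n^{\ell}Q(a,b,\boldsymbol c)$. This is $n^{2-m}n^{\ell}=1$ times
\[
e_{\ell+2}+\sum_{k=0}^{\ell}H_{\ell-k}e_k
\]
with the expansion above. The top term gives $\frac{u}{s}E_\ell$, since $E_{\ell+1}=E_{\ell+2}=0$. Collecting the coefficient of $E_j$ in the sum gives
\[
H_{\ell-j}+uH_{\ell-j-1}+\tfrac us H_{\ell-j-2}.
\]
Here each term is present only if its index is at most $\ell$, and the convention $H_{-1}=0$ together with the term $H_0$ must be tracked at $j=\ell-1$ and $j=\ell$. Next I expand $n^\ell Q(s,\boldsymbol c)=n\cdot n^{2-(\ell+1)}(\cdots)$, which is $n$ times
\[
e_{\ell+1}+\sum_{k=0}^{\ell-1}H_{\ell-1-k}e_k.
\]
Its coefficient of $E_j$ is $n\bigl(H_{\ell-1-j}+\tfrac1sH_{\ell-2-j}\bigr)$, plus the contributions of the top term, which are $\frac ns E_\ell$ and $nE_{\ell+1}=0$.

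Then I subtract and use the Hermite recurrence in the form
\[
H_{\ell-j}-nH_{\ell-j-1}=-(\ell-j-1)H_{\ell-j-2}.
\]
This collapses the coefficient of $E_j$ for $0\le j\le\ell-1$ to
\[
uH_{\ell-j-1}-\Bigl(\ell-j-1+\tfrac{n-u}{s}\Bigr)H_{\ell-j-2},
\]
which matches \eqref{eq:merge-formula}. For $j=\ell$, the first expression contributes $H_0+\frac us=1+\frac us$ and the second contributes $\frac ns$, giving $1-\frac{n-u}{s}$. The boundary case $j=\ell-1$ needs a separate check: the term $\frac us H_{-1}$ vanishes, and the recurrence at index $1$ reads $H_1-nH_0=0$, consistent with $(\ell-j-1)=0$. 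Finally, the case $\ell=0$ is immediate. Here $Q(a,b)=n^{0}(e_2+H_0e_0)=\frac1{ab}+1$ and $Q(s)=n^{1}\cdot\frac1n\cdot$(forests of one vertex)$=1$, so the difference is $\frac{1}{ab}$.

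The main obstacle is purely bookkeeping. It lies in matching the ranges of summation and the edge terms ($e_m$ versus the Hermite sum, and $E_{\ell+1},E_{\ell+2}=0$) so that the Hermite recurrence applies uniformly; the identity $\frac1{ab}=\frac us$ is the only non-mechanical input.
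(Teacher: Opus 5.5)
Your proposal is correct and takes essentially the same approach as the paper. You expand both $n^\ell Q(a,b,\boldsymbol c)$ and $n^\ell Q(s,\boldsymbol c)$ via \eqref{eq:Q-formula} in terms of the $E_j$ using $\frac{1}{ab}=\frac us$, collect coefficients, apply the Hermite recurrence with $r=\ell-j-1$, and check the $E_\ell$ term and the $\ell=0$ case directly; your boundary bookkeeping at $j=\ell-1$ and $j=\ell$ is right.
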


\begin{proof}
We have 
\[
  e_k(a^{-1},b^{-1},\boldsymbol c^{-1})
  =E_k+uE_{k-1}+\frac{u}{s}E_{k-2}
\ \ \text{and}\ \ 
  e_k(s^{-1},\boldsymbol c^{-1})
  =E_k+\frac{1}{s}E_{k-1}.
\]
Now let us use \eqref{eq:Q-formula} with $m=\ell+2$. If $\ell\geqslant 1$ we have
\begin{align*}
 n^\ell\bigl(Q(a,b,\boldsymbol c)-Q(s,\boldsymbol c)\bigr)&=e_m(a^{-1},b^{-1},\boldsymbol c^{-1})+\sum_{j=0}^{m-2}H_{m-j-2}e_j(a^{-1},b^{-1},\boldsymbol c^{-1})\\
 &\quad -n e_{m-1}(s^{-1},\boldsymbol c^{-1})-\sum_{j=0}^{m-3}nH_{m-j-3}e_j(s^{-1},\boldsymbol c^{-1})\\
 &=\frac{1}{ab}E_{\ell}(\boldsymbol c^{-1})+\sum_{j=0}^{\ell}H_{\ell-j}(E_j+uE_{j-1}+\frac{u}{s}E_{j-2})\\
&\quad -ns^{-1}E_{\ell}(\boldsymbol c^{-1})-\sum_{j=0}^{\ell-1}nH_{\ell-1-j}(E_j+s^{-1}E_{j-1})\\
&=((ab)^{-1}-ns^{-1}+1)E_{\ell}\\
&\quad +\sum_{j=0}^{\ell-1}E_j\left(H_{\ell-j}+uH_{\ell-j-1}+\frac{u}{s}H_{\ell-j-2}-nH_{\ell-1-j}-\frac{n}{s}H_{\ell-j-2}\right)\\
&=\sum_{j=0}^{\ell-1}E_j\left(uH_{\ell-j-1}-\left(\ell-j-1+\frac{n-u}{s}\right)H_{\ell-j-2}\right)+\left(1+\frac{u-n}{s}\right)E_{\ell},
\end{align*}
where after collecting the coefficient of each $E_j$, we used the Hermite recurrence
$H_{r+1}=nH_r-rH_{r-1}$ for $r=\ell-j-1$. This gives exactly \eqref{eq:merge-formula}. If $\ell=0$, then we can directly compute
$$Q(a,b)-Q(s)=\frac{ab+1}{ab}-1=\frac{1}{ab},$$
this is also equal to $\left(1-\frac{n-u}{s}\right)E_0$. Indeed, $s=n$, $E_0=1$, and $\frac{u}{s}=\frac{1}{ab}$.
\end{proof}

\begin{Lemma}\label{lem:P-monotone}
The sequence $Q(P_t)$ is strictly decreasing:
\[
  Q(P_0)>Q(P_1)>\cdots>Q(P_{n-1}).
\]
\end{Lemma}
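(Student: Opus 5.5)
The plan is to apply the exact merge formula of Lemma~\ref{lem:merge-formula} with $a=t+1$, $b=1$ and $\boldsymbol c=(1^{\ell})$, where $\ell=n-t-2$. Merging the first two coordinates of $P_t$ gives $P_{t+1}$, so it suffices to show that the right-hand side of \eqref{eq:merge-formula} is strictly positive for $0\leqslant t\leqslant n-2$.

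\textbf{Edge case.} When $\ell=0$ (that is, $t=n-2$), we are comparing $P_{n-2}$ with $P_{n-1}$. The lemma gives $Q(a,b)-Q(s)=1/(ab)>0$ directly.

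\textbf{Setup for $\ell\geqslant 1$.} Here all $c_i=1$, so $E_j=\binom{\ell}{j}$. We have $s=t+2$ and $u=1+\frac{1}{t+1}\in(1,2]$. The coefficient of $E_\ell$ is
\[
1-\frac{n-u}{s}.
\]
This may be negative, since $n-u$ is typically larger than $s$, so it cannot be discarded and must be absorbed by the other terms.

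\textbf{Regrouping.} I will rewrite the bracket $uH_{r}-(r+\frac{n-u}{s})H_{r-1}$, with $r=\ell-j-1$, using the Hermite recurrence $rH_{r-1}=nH_r-H_{r+1}$. This turns it into
\[
H_{r+1}-(n-u)H_r-\frac{n-u}{s}H_{r-1}.
\]
Equivalently, the whole right-hand side becomes $\Phi(\ell)-\frac{n-u}{s}\,\Psi(\ell)$-type binomial sums of Hermite numbers. The cleanest route is through generating functions. Since $E_j=\binom{\ell}{j}$, the sums $\sum_j\binom{\ell}{j}H_{\ell-j-1}$ are binomial convolutions. Using the exponential generating function $\exp(nt-t^2/2)$ together with $e^t$, we get the identity
\[
\sum_j\binom{\ell}{j}\He_{\ell-j}(n)=\He_\ell(n+1).
\]
This is the Appell property $\He_\ell(x+y)=\sum_j\binom{\ell}{j}\He_{\ell-j}(x)y^j$. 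Up to the index shift caused by $j$ running only to $\ell-1$ (handled by adding and subtracting the $j=\ell$ terms), every sum collapses to Hermite values at $n+1$. The target is therefore a closed form
\[
n^\ell\bigl(Q(P_t)-Q(P_{t+1})\bigr)=\alpha\,\He_{\ell}(n+1)+\beta\,\He_{\ell-1}(n+1)+\gamma\,\He_{\ell-2}(n+1)+\text{(small boundary terms)},
\]
with explicit $\alpha,\beta,\gamma$ depending on $u$, $s$ and $n$.

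\textbf{Main obstacle: positivity.} This is the step I expect to be hardest. I will use the ratio bounds of Lemma~\ref{lem:Hermite-bounds}, in the analogous form $n\leqslant \He_r(n+1)/\He_{r-1}(n+1)\leqslant n+1$ for $r\leqslant n+1$, which follows from the same induction. The aim is to show that the leading term dominates. Heuristically, the bracket with $r=\ell-j-1$ is about $(u-1)H_r+\bigl(n-r-\frac{n-u}{s}\bigr)\frac{H_r}{n}$, which is positive because $r\leqslant \ell-1<n-t-2$ and $\frac{n-u}{s}<n$. The only negative contribution is the $E_\ell$ term, of size at most $n/s$. The $j=\ell-1$ term alone contributes $\ell\bigl(u-\frac{n-u}{s}\cdot 0\bigr)$-ish, namely $\ell\,u\,H_0=\ell u$ minus nothing, because $H_{-1}=0$. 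If this $j=\ell-1$ term does not cover the $E_\ell$ deficit, I will pair the $E_\ell$ term with the $j=\ell-2$ term, which is of order $\binom{\ell}{2}(un-1-\frac{n-u}{s})$. Since $\binom{\ell}{2}$ grows quadratically, this absorbs the deficit except for small $\ell$, which I will check directly.
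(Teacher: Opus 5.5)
Your setup matches the paper's: merging $a=t+1$ with $b=1$ gives $s=t+2$, $u=\frac{s}{s-1}$, $\ell=n-s$ and $E_j=\binom{\ell}{j}$. Your treatment of $\ell=0$ is correct, and so is your observation that the possibly negative coefficient $1-\frac{n-u}{s}$ of $E_\ell$ must be absorbed by another term. The gap is that positivity of \eqref{eq:merge-formula}, which is the whole content of the lemma, is never established.

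First, the announced closed form does not arise as you claim. In these sums $\binom{\ell}{j}$ is paired with $H_{\ell-j-1}$, not with $H_{\ell-j}$, so the Appell identity does not collapse them. Pascal's rule gives instead
\[
\sum_{j=0}^{\ell-1}\binom{\ell}{j}\He_{\ell-1-j}(n)=\sum_{k=0}^{\ell-1}\He_k(n+1).
\]
This is a partial sum, not a fixed combination $\alpha\He_\ell(n+1)+\beta\He_{\ell-1}(n+1)+\gamma\He_{\ell-2}(n+1)$. In any case you never produce $\alpha,\beta,\gamma$ or a sign analysis.

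Second, the termwise heuristic is not a lower bound. Replacing $H_{r-1}$ by $H_r/n$ underestimates $H_{r-1}$; the valid inequality is $H_{r-1}\leqslant H_r/(n-1)$. The reasons you cite, $r\leqslant\ell-1$ and $\frac{n-u}{s}<n$, only bound $u-\frac rn-\frac{n-u}{sn}$ below by $u-2+\frac{s+1}{n}$. That bound is negative, for example, when $s=3$ and $n>8$. What actually makes each coefficient positive is $u-1=\frac{1}{s-1}>\frac1s$, which beats the $\frac{n-u}{s}H_{r-1}$ term.

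Third, the absorption of the $E_\ell$ term is left conditional. The fallback ``check small $\ell$ directly'' is not a finite check, since $\ell=1$ means $s=n-1$ for every $n$.

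All of this is repaired by two short computations, which is exactly the paper's proof. For $r=\ell-j-1\geqslant1$, use $H_r\geqslant(n-1)H_{r-1}$ and $r\leqslant n-s-1$:
\[
uH_r-\Bigl(r+\frac{n-u}{s}\Bigr)H_{r-1}\geqslant\Bigl(u(n-1)-(n-s-1)-\frac{n-u}{s}\Bigr)H_{r-1}=\Bigl(\frac{n}{s(s-1)}+s\Bigr)H_{r-1}>0.
\]
For $j=\ell-1$ the bracket is just $u$ and $E_{\ell-1}=\ell$. Combining this term with the $E_\ell=1$ term gives
\[
u(n-s)+1-\frac{n-u}{s}=\frac{s^2-s+1}{s(s-1)}\,n-s\geqslant\frac{1}{s-1}>0,
\]
using $n\geqslant s$. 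So the $j=\ell-1$ term alone always covers the deficit, and you need neither the generating-function detour nor the $j=\ell-2$ fallback.
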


\begin{proof}
For $2\leqslant s\leqslant n$, compare $P_{s-2}$ and $P_{s-1}$ by applying
Lemma~\ref{lem:merge-formula} with
\[
  a=s-1,\qquad b=1,\qquad \boldsymbol c=1^{\,n-s},
  \qquad u=\frac{s}{s-1}, \qquad \ell=n-s
\]
Let us first check the case when $\ell\geqslant 1$.
By $\frac{H_r}{H_{r-1}}\geqslant n-1$, the coefficient of $E_j$ for $r=\ell-j-1\geqslant 1$ in
\eqref{eq:merge-formula} satisfies
\begin{align*}
uH_{\ell-j-1}-\left(\ell-j-1+\frac{n-u}{s}\right)H_{\ell-j-2}
&\geqslant \left(u(n-1)-(\ell-j-1)-\frac{n-u}{s}\right)H_{\ell-j-2}\\
&\geqslant \left(u(n-1)-(n-s-1)-\frac{n-u}{s}\right)H_{\ell-j-2}\\
&=\left(\frac{s}{s-1}(n-1)-(n-s-1)-\frac{n-\frac{s}{s-1}}{s}\right)H_{\ell-j-2}\\
&=\left(\frac{n}{s(s-1)}+s\right)H_{\ell-j-2}\\
&>0
\end{align*}

Using $E_{\ell}=1$ and $E_{\ell-1}=n-s$, the remaining $r=0$ term, when present, together with the final term in
\eqref{eq:merge-formula}, equals
\begin{align*}
uE_{\ell-1}+\left(1-\frac{n-u}{s}\right)E_{\ell}&=u(n-s)+\left(1-\frac{n-u}{s}\right)\\
&=\frac{s}{s-1}(n-s)+\left(1-\frac{n-\frac{s}{s-1}}{s}\right)\\
&=\left(\frac{s}{s-1}-\frac{1}{s}\right)n-\frac{s^2}{s-1}+1+\frac{1}{s-1}\\
&=\frac{s^2-s+1}{s(s-1)}n-s\\
&\geqslant \frac{s^2-s+1}{s(s-1)}s-s\\
&=\frac{1}{s-1}\\
&>0.
\end{align*}
Finally, when $\ell=0$ we can directly check that 
$$Q(P_{n-2})-Q(P_{n-1})=Q(n-1,1)-Q(n)=\frac{n}{n-1}-1=\frac{1}{n-1}>0.$$
Thus, every merge of a
singleton into the nontrivial part strictly decreases $Q$.
\end{proof}

Note that
\[
  Q_n=Q(1,\dots,1)=\frac{F(K_n)}{T(K_n)}.
\]
Set
\begin{equation}\label{eq:delta-def}
  \delta_n=Q_n-Q(2,1^{\,n-2}).
\end{equation}
Combining \eqref{eq:majorization-bound} with
Lemma~\ref{lem:P-monotone} gives the following useful uniform estimate.

\begin{Cor}[Minimum contraction defect]\label{cor:min-defect}
If $\boldsymbol a\neq(1,\dots,1)$ is a positive integral vector of total mass
$n$, then
\begin{equation}\label{eq:min-defect}
  Q_n-Q(\boldsymbol a)\geqslant\delta_n>0.
\end{equation}
\end{Cor}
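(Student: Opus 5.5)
Let $\boldsymbol a\neq(1,\dots,1)$ be a positive integral vector of total mass $n$, and let $m$ be its length. Since $\boldsymbol a$ is not all ones, some coordinate is at least $2$, so $m\leqslant n-1$. Write $m=n-t$ with $1\leqslant t\leqslant n-1$.

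The plan is to chain the two earlier results. First, inequality \eqref{eq:majorization-bound}, which comes from the Schur-convexity in Lemma~\ref{lem:Schur-convex}, applies because $\boldsymbol a$ is a positive integral vector of length $n-t$ and sum $n$, so it is majorized by $P_t=(t+1,1^{\,n-t-1})$. This gives
$$Q(\boldsymbol a)\leqslant Q(P_t).$$
The only thing to check is the majorization itself. After sorting $\boldsymbol a$ decreasingly, each of its $n-t$ coordinates is at least $1$. Hence the sum of the top $k$ coordinates is at most $n-(n-t-k)=t+k$, which is exactly the sum of the top $k$ coordinates of $P_t$, with equality at $k=n-t$.

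Second, Lemma~\ref{lem:P-monotone} says the sequence $Q(P_t)$ is strictly decreasing. Since $t\geqslant 1$, this gives
$$Q(P_t)\leqslant Q(P_1)=Q(2,1^{\,n-2}).$$
Combining the two steps,
$$Q_n-Q(\boldsymbol a)\geqslant Q_n-Q(P_1)=\delta_n.$$

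Strict positivity of $\delta_n$ is the case $t=0$ versus $t=1$ of Lemma~\ref{lem:P-monotone}, because $Q(P_0)=Q(1,\dots,1)=Q_n$. So $\delta_n=Q(P_0)-Q(P_1)>0$ whenever $n\geqslant 2$, which is exactly when such an $\boldsymbol a$ exists.

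There is no real obstacle beyond stating the majorization cleanly. One must also remember that Schur-convexity is being applied within the fixed-length slice ($m=n-t$), which is exactly the setting of \eqref{eq:majorization-bound}.
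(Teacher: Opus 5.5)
Your proof is correct and follows the paper's route exactly. The paper gets the corollary by combining the majorization bound \eqref{eq:majorization-bound}, which comes from Schur-convexity, with the strict monotonicity $Q(P_0)>Q(P_1)>\cdots$ of Lemma~\ref{lem:P-monotone}, so that $\delta_n=Q(P_0)-Q(P_1)>0$; your check that $P_t$ majorizes $\boldsymbol a$ fills in a step the paper states without proof.
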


We shall also need a quantitative lower bound on $\delta_n$.  Let $f_{n,k}$
be the number of spanning forests of $K_n$ with $k$ components, and put
$T_n=n^{n-2}$.

\begin{Lemma}\label{lem:delta-lower}
For every $n\geqslant3$,
\begin{equation}\label{eq:delta-sum}
  \delta_n
  =\sum_{k=2}^n\frac{f_{n,k}}{T_n}\frac{k-1}{n-1},
\end{equation}
and
\begin{equation}\label{eq:delta-lower}
  \delta_n\geqslant
  \frac{3}{4n}+\frac{23n^2+34n-120}{4n^4}.
\end{equation}
\end{Lemma}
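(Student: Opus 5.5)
We first derive the exact formula \eqref{eq:delta-sum} by a symmetry argument, and then truncate it after the $k=2$ and $k=3$ terms to obtain \eqref{eq:delta-lower}.

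\textbf{Exact formula.} Fix an edge $e$ of $K_n$ and take $S=\{e\}$, so that $\boldsymbol a(S)=(2,1^{\,n-2})$.

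By the Cayley--Hurwitz formula \eqref{eq:weighted-tree}, $T_S=\tau(2,1^{n-2})=2n^{n-3}=\frac{2}{n}T_n$.

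For $F_S$ we count the pairs $(F,e)$ with $F$ a spanning forest of $K_n$ and $e\in F$. A forest with $k$ components has $n-k$ edges. By edge-transitivity of $K_n$, every edge lies in the same number of forests, so
$$F_S=\frac{1}{\binom n2}\sum_{k=1}^n (n-k)f_{n,k}.$$
Dividing the two expressions gives
$$Q(2,1^{n-2})=\sum_{k}\frac{f_{n,k}}{T_n}\cdot\frac{n-k}{n-1}.$$
We also have $Q_n=\sum_k f_{n,k}/T_n$. Subtracting, the $k$-th term acquires the factor $1-\frac{n-k}{n-1}=\frac{k-1}{n-1}$, which is \eqref{eq:delta-sum}. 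The $k=1$ term vanishes, so the sum starts at $k=2$.

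\textbf{Lower bound.} All terms in \eqref{eq:delta-sum} are nonnegative, so it suffices to keep only $k=2$ and $k=3$.

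For $k=2$, Lemma~\ref{2-forests-complete-graph} gives $f_{n,2}/T_n=(1+\frac6n)\frac{n-1}{2n}$. Multiplying by $\frac{1}{n-1}$, the contribution is exactly
$$\frac{n+6}{2n^2}=\frac{1}{2n}+\frac{3}{n^2}.$$
The missing $\frac{1}{4n}$ must therefore come from $k=3$. Since R\'enyi's asymptotics give $f_{n,3}/T_n\to\frac18$, the $k=3$ contribution $\frac{2}{n-1}\cdot\frac{f_{n,3}}{T_n}$ should be about $\frac{1}{4n}$, as required.

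\textbf{Computing $f_{n,3}$ exactly.} I would use R\'enyi's explicit formula
$$f_{n,k}=\binom nk\sum_{i=0}^{k}\Bigl(-\tfrac12\Bigr)^i\binom ki\binom{n-k}{i}(k+i)\,i!\,n^{n-k-i-1}.$$
Alternatively, sum the forests with three components by their component sizes and apply Cayley's formula on each component. That sum can be collapsed with the weighted forest formula of Lemma~\ref{lem:weighted-formulas}, or with the Abel-type identity underlying \eqref{eq:weighted-forest}.

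Either route gives $f_{n,3}/T_n$ as an explicit polynomial in $1/n$ of degree at most $4$, with leading term $\frac18$. As a sanity check, its subleading terms should match the $(1+\frac6n)$-type correction already seen for $k=2$.

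\textbf{Final inequality.} The claim then reduces to a single rational-function inequality:
$$\frac{n+6}{2n^2}+\frac{2}{n-1}\cdot\frac{f_{n,3}}{T_n}\;\geqslant\;\frac{3}{4n}+\frac{23n^2+34n-120}{4n^4}.$$
After clearing denominators this becomes a polynomial inequality in $n$. I would verify it for $n$ beyond a small threshold by checking that the leading coefficients dominate, and settle the remaining small $n\geqslant3$ by direct evaluation.

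\textbf{Main obstacle.} The hard step is obtaining the exact (or a sufficiently sharp lower) expression for $f_{n,3}/T_n$. The constant $\frac{23n^2+34n-120}{4n^4}$ looks tuned to the exact $k=2,3$ terms, so there is essentially no slack, and the algebra must be carried out carefully. If the $k=3$ term alone turns out to be insufficient for small $n$, I would add the $k=4$ term, or simply compute $\delta_n$ directly for those $n$.
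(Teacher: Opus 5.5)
Your proposal is correct and is essentially the paper's proof. The same edge-symmetry double count gives \eqref{eq:delta-sum}, and the bound comes from keeping only the $k=2,3$ terms with R\'enyi's exact values. In fact your R\'enyi formula at $k=3$ gives $f_{n,3}/T_n=\frac{(n-1)(n-2)(n^2+13n+60)}{8n^4}$. The $k=2,3$ contributions $\frac{n+6}{2n^2}+\frac{(n-2)(n^2+13n+60)}{4n^4}$ then sum to exactly $\frac{3}{4n}+\frac{23n^2+34n-120}{4n^4}$, so you need no polynomial inequality, no small-$n$ check and no $k=4$ term.
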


\begin{proof}
Let $\cF_{n,k}$ be the set of forests of $K_n$ with $k$ connected components, and let $\cT_n$ be the set of spanning trees of $K_n$. For a fixed edge $e$ of $K_n$,
\[
  \Pr(e\in\cF_{n,k})=\frac{n-k}{\binom{n}{2}},
  \qquad
  \Pr(e\in\cT_n)=\frac{n-1}{\binom{n}{2}}=\frac{2}{n}.
\]
The ratio $Q(2,1^{n-2})$ is the forest-to-tree ratio conditioned on containing $e$. Hence
$$\phi(2,1^{n-2})=\sum_{k=1}^n\frac{n-k}{\binom{n}{2}}f_{n,k}\ \ \ \text{and}\ \ \ \tau(2,1^{n-2})=\frac{n-1}{\binom{n}{2}}T_n,$$
consequently
$$\delta_n=Q_n-Q(2,1^{n-2})=\sum_{k=1}^n\frac{f_{n,k}}{T_n}-\sum_{k=1}^n\frac{\frac{n-k}{\binom{n}{2}}f_{n,k}}{\frac{n-1}{\binom{n}{2}}T_n}=\sum_{k=1}^n\frac{f_{n,k}}{T_n}\frac{k-1}{n-1}.$$
Since the $k=1$ term is $0$, this is exactly
\eqref{eq:delta-sum}.

R\'enyi \cite{renyi1959some} proved that (see also Lemma~\ref{2-forests-complete-graph} for the first identity)
\begin{align}
  \frac{f_{n,2}}{T_n}
    &=\frac{(n-1)(n+6)}{2n^2}, \label{eq:fn2}\\
  \frac{f_{n,3}}{T_n}
    &=\frac{(n-1)(n-2)(n^2+13n+60)}{8n^4}.
      \label{eq:fn3}
\end{align}
Keeping only the $k=2,3$ terms in \eqref{eq:delta-sum} gives
\begin{equation*}
  \delta_n
  \geqslant \frac{1}{n-1}
     \left(\frac{f_{n,2}}{T_n}
     +2\frac{f_{n,3}}{T_n}\right)
  =\frac{3}{4n}+\frac{23n^2+34n-120}{4n^4}.
\end{equation*}
\end{proof}

\begin{Rem}
By combining $f_{n,k}\sim \frac{1}{2^{k-1}(k-1)!}$ with some bound on the tail one can prove that $\delta_n=\frac{\sqrt{e}}{2n}+O\left(\frac{1}{n^2}\right)$.

\end{Rem}

Our next goal is a quantitative merge estimate.

\begin{Lemma}[Contraction lemma]\label{lem:contraction}
Suppose that $n\geqslant 10$, $t\geqslant 1$, and
$t+1\leqslant \frac{n}{2}$.
Let
\[
  \boldsymbol a=(a,b,c_1,\dots,c_\ell),
  \qquad \ell=n-t-2,
\]
have total mass $n$.  Then
\begin{equation}\label{eq:contraction-bound}
  Q(a,b,\boldsymbol c)-Q(a+b,\boldsymbol c)\leqslant 2\delta_n.
\end{equation}
\end{Lemma}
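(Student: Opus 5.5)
The plan is to start from the exact merge formula of Lemma~\ref{lem:merge-formula} and bound its right-hand side from above. Put $s=a+b$ and $u=\frac1a+\frac1b\leqslant 2$, and recall that $\ell=n-t-2\geqslant \frac n2-1\geqslant 4$, so the case $\ell=0$ does not arise. Dividing by $n^\ell$ and writing $H_r=n^rh_r$, the difference $Q(a,b,\boldsymbol c)-Q(a+b,\boldsymbol c)$ becomes a sum over $j$ of terms $E_j\,n^{-j-1}$ multiplied by
$$u\,h_{\ell-j-1}-\frac{1}{n}\Bigl(\ell-j-1+\frac{n-u}{s}\Bigr)h_{\ell-j-2},$$
plus the boundary term $n^{-\ell}\bigl(1-\frac{n-u}{s}\bigr)E_\ell$. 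I would then compare this term by term with the same expression for $\delta_n$, since
$$\delta_n=Q(1,1,1^{n-2})-Q(2,1^{n-2})$$
is itself the merge formula with $a=b=1$, $u=2$, $s=2$ and $\boldsymbol c=1^{n-2}$.

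\textbf{Step 1 (reduce the dependence on $\boldsymbol c$).} The quantities $E_j=e_j(c_1^{-1},\dots,c_\ell^{-1})$ are the only place where $\boldsymbol c$ enters. By the Schur-convexity computation in the proof of Lemma~\ref{lem:Schur-convex}, the map $\boldsymbol c\mapsto E_j$ is Schur-convex at fixed mass $n-s$ and fixed length $\ell$. I would group the $j$-sum so that every $E_j$ carries a nonnegative coefficient, or at least so that the positive part dominates. Then each $E_j$ can be replaced by its value at the extremal vector $(n-s-\ell+1,1^{\ell-1})$, or even more crudely by $\binom{\ell}{j}$, using $c_i\geqslant 1$. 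After this the bound depends only on $n$, $\ell$, $u$ and $s$.

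\textbf{Step 2 (bound the main term).} The dominant contribution comes from $j=0$ and small $j$, and is essentially
$$\frac{u}{n}\sum_{j}\binom{\ell}{j}\frac{h_{\ell-j-1}}{n^{j}}-\frac{1}{n^2}\sum_j\binom{\ell}{j}(\ell-j-1)\frac{h_{\ell-j-2}}{n^{j}}+\cdots.$$
To control it, I would use $h_r\leqslant \exp\!\bigl(-\frac{r(r-1)}{2n^2}\bigr)$ from Lemma~\ref{lem:Hermite-bounds}, and $\frac{H_r}{H_{r-1}}\geqslant n-1$ from the same lemma to bound the negative terms from below. The hypothesis $t+1\leqslant\frac n2$ is exactly what keeps $\ell$ large. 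This should make the negative term, of order $\frac{\ell}{n}\cdot\frac1n$, comparable to the corresponding terms in $\delta_n$, together with the exponential decay from the $h_r$. The target is an upper bound of the form $\frac{c_0}{n}+O\bigl(\frac1{n^2}\bigr)$ with $c_0<\frac32$. This is then compared with $2\delta_n\geqslant\frac{3}{2n}+\frac{23n^2+34n-120}{2n^4}$ from Lemma~\ref{lem:delta-lower}. Heuristically $2\delta_n\approx\frac{\sqrt e}{n}$, and $u\leqslant 2$ together with the subtracted terms should leave some room below that.

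\textbf{Main obstacle.} The naive estimate, which drops all negative terms and uses $u\leqslant2$, gives roughly $\frac{2}{n}e^{-1/8}\approx\frac{1.76}{n}$ in the worst case $\ell\approx\frac n2$. This exceeds the lower bound $\frac{3}{2n}$ for $2\delta_n$, and even its true value $\approx\frac{1.65}{n}$. So the subtracted coefficient $\bigl(\ell-j-1+\frac{n-u}{s}\bigr)H_{\ell-j-2}$ must genuinely be kept. The cleanest way I would try is to evaluate the resulting sums in closed form via the generating function $\exp\!\bigl(nt-\frac{t^2}{2}\bigr)=\sum_r\He_r(n)\frac{t^r}{r!}$, which turns sums of the shape $\sum_j\binom{\ell}{j}n^{-j}H_{\ell-j}$ into explicit Hermite values. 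I would then split into the case $u=2$ (so $a=b=1$), where the comparison with $\delta_n$ is nearly exact, and the case $u\leqslant\frac32$, where the crude bound already suffices. Finally, I would verify the remaining small values $10\leqslant n\leqslant n_1$ by computer, in the same way as Table~\ref{tab: table 1}.
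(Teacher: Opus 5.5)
You have the right starting point: the merge formula, $E_j\le\binom{\ell}{j}$, the Gaussian bound on $h_r$, and Lemma~\ref{lem:delta-lower}. But the case split you propose does not close, because your ``naive estimate'' $\frac{2}{n}e^{-1/8}$ is only the $j=0$ term.

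Write $\Delta=Q(a,b,\boldsymbol c)-Q(a+b,\boldsymbol c)$ and $y=\ell/n\in[\frac12,1)$. Dropping the subtracted parts gives $n\Delta\le u\sum_j E_j n^{-j}h_{\ell-j-1}$. The weights $E_jn^{-j}$ sum to as much as $(1+\frac1n)^{\ell}\approx e^{y}$; for instance $E_j=\binom{\ell}{j}$ exactly when $\boldsymbol c=1^{\ell}$, and the Schur-extremal $\boldsymbol c$ gives nearly the same. So the crude bound is about $u\,e^{y-y^2/2}$:
\begin{itemize}
\item For $u=2$ it reaches about $2e^{3/8}\approx 2.91$ at $y=\frac12$.
\item For $u=\frac32$ it can be as large as $\frac32e^{y-y^2/2}\ge\frac32e^{3/8}\approx 2.18$. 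For example, with $t=1$, $(a,b)=(2,1)$, $\boldsymbol c=1^{n-3}$ it is about $\frac32\sqrt e\approx 2.47$.
\end{itemize}
Meanwhile $2n\delta_n\approx\sqrt e\approx 1.65$. So your claim that the crude bound suffices when $u\le\frac32$ is false: the subtracted coefficient must be kept for every merge, not only for $a=b=1$. That subtracted part is also not a lower-order correction ``of order $\frac{\ell}{n}\cdot\frac1n$''. The piece $\frac{n-u}{s}H_{r-1}$ is a constant fraction of $uH_r$ when $s$ is small.

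The ingredient you are missing is the paper's coefficient comparison, which removes any need for a case split. Put $C_r(s,u)=uH_r-\bigl(r+\frac{n-u}{s}\bigr)H_{r-1}$. Then:
\begin{itemize}
\item $C_r$ is increasing in $u$, and $u\le\frac{s}{s-1}$.
\item $C_r(2,2)-C_r\bigl(s,\frac{s}{s-1}\bigr)=\frac{s-2}{s-1}\bigl[H_r-\bigl(\frac{n(s-1)}{2s}-1\bigr)H_{r-1}\bigr]\ge0$, by $H_r/H_{r-1}\ge n-1$.
\item The final coefficient $1-\frac{n-u}{s}$ is $\le 0$, because $s\le t+2\le\frac n2+1$. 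This is a second use of $t+1\le\frac n2$, besides keeping $\ell$ large.
\end{itemize}
Hence every merge is dominated by the singleton--singleton coefficient $C_r(2,2)>0$. After that, $E_j\le\binom{\ell}{j}$ is legitimate, and $C_r(2,2)\le n^r\bigl(\frac32-\frac{r-1}{n}\bigr)h_r$ for $r\ge1$; the $r=0$ term needs a separate $\frac{\ell}{2n^{\ell-1}}$ correction. The binomial sum then gives $n\Delta\le e^{y-y^2/2+3/n}\bigl(\frac32-y+\frac2n\bigr)+\frac{3}{2n^2}$. This is decreasing for $y\ge\frac12-\frac2n$, so it is essentially $e^{3/8}\approx 1.455<\frac32<2n\delta_n$, with the $O(1/n)$ terms checked explicitly for $n\ge 10$.

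Your suggested route for $u=2$, a term-by-term comparison with the merge expansion of $\delta_n$, is not directly available either. The quantity $\delta_n$ corresponds to $\ell=n-2$ and $\boldsymbol c=1^{n-2}$, while here $\ell\le n-3$ and $\boldsymbol c$ is arbitrary. Finally, the closing computer check over $10\le n\le n_1$ has no explicit $n_1$ behind it.
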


\begin{proof}
Set
\[
  \Delta=Q(a,b,\boldsymbol c)-Q(a+b,\boldsymbol c),
  \qquad s=a+b,
  \qquad u=\frac{1}{a}+\frac{1}{b}.
\]
The merged component has $s$ vertices, while the resulting contraction uses
$t+1$ edges; hence
\[
  s\leqslant t+2\leqslant \frac {n}{2}+1.
\]

For $r\geqslant0$, define
\[
  C_r(s,u)=uH_r-
  \left(r+\frac{n-u}{s}\right)H_{r-1}.
\]
With this notation and $r=\ell-j-1$ one can rewrite \eqref{eq:merge-formula} as
$$n^{\ell}\Delta=\sum_{j=0}^{\ell-1}C_{\ell-j-1}(s,u)E_j+\left(1-\frac{n-u}{s}\right)E_{\ell}.$$
First, observe that for $n\geqslant 5$ we have
\begin{equation}\label{eq:last-coeff-negative}
  1-\frac{n-u}{s}\leqslant 0.
\end{equation}
Indeed, since $u\leqslant \frac{s}{s-1}$ and $s\leqslant \frac{n}{2}+1$ we have
$$1-\frac{n-u}{s}\leqslant \frac{s}{s-1}-\frac{n}{s}=\frac{1}{s}\left(\frac{s^2}{s-1}-n\right)\leqslant \frac{1}{s}\left(\frac{2}{n}\left(\frac{n}{2}+1\right)^2-n\right)\leqslant 0$$
if $n\geqslant 5$, where we used the fact that the function $\frac{s^2}{s-1}$ is monotone increasing for $s\geqslant 2$, and that $\frac{n^2}{2}>\left(\frac{n}{2}+1\right)^2$ for $n\geqslant 5$. Thus, the last term in \eqref{eq:merge-formula} may be discarded when seeking an upper bound.

For fixed $s$, the function $C_r(s,u)$ is increasing in $u$, while $u\leq\frac{s}{s-1}$.
Furthermore,
\begin{equation} \label{eq:coefficient-comparison}
 C_r(2,2)-C_r\left(s,\frac{s}{s-1}\right) 
  =\frac{s-2}{s-1}
 \left[
   H_r-\left(\frac{n(s-1)}{2s}-1\right)H_{r-1}
 \right]\geqslant0,
 \end{equation}
by \eqref{eq:Hermite-ratio}. Thus, every coefficient is bounded above by the
coefficient obtained by merging two singletons. Note that
for $1\leqslant r\leqslant \ell-1\leqslant  n-4$ we have
\begin{equation*} C_r(2,2) = 2H_r-\left(r+\frac n2-1\right)H_{r-1}
\geqslant \left(2(n-1)-r-\frac n2+1\right)H_{r-1}
=\left(\frac{3n}{2}-r-1\right)H_{r-1}>0. 
\end{equation*}
Also $C_0(2,2)=2>0$. Since each $c_i\geqslant 1$,
$$E_j=e_j(c_1^{-1},\dots,c_\ell^{-1})\leqslant \binom\ell j.$$
For $r=\ell-j-1$, and $r\geqslant 1$ we have
\begin{equation*}
  C_r(2,2)
  =n^r\left[
      2h_r-
      \left(\frac{1}{2}+\frac{r-1}{n}\right)h_{r-1}
    \right]
  \leqslant n^r
    \left(\frac{3}{2}-\frac{r-1}{n}\right)h_r
  =n^r
    \left(\frac{3}{2}-\frac{\ell+2}{n}+\frac{j}{n}\right)h_r.
\end{equation*}
We have to be a bit careful in the case $r=0$. For $r=0$, the inequality $h_{r-1}\geqslant h_r$ used above is unavailable. Indeed, $C_0(2,2)=2$ while the above upper bound would give $\frac{3}{2}+\frac{1}{n}$. To compensate its contribution to $n\Delta$ we need to add an extra $\frac{\ell}{2n^{\ell-1}}$ term since $E_{\ell-1}\leq \ell$.  

Using
\eqref{eq:h-exponential}, we obtain
\begin{align}
 n\Delta
 \leqslant 
 \sum_{j=0}^{\ell-1}
 \binom\ell j\frac1{n^j}
 \exp\left(
   -\frac{(\ell-j-1)(\ell-j-2)}{2n^2}
 \right)
 \left(\frac{3}{2}-\frac{\ell-2}{n}+\frac{j}{n}\right)+\frac{\ell}{2n^{\ell-1}}.
 \label{eq:Delta-binomial}
\end{align}
Since $n\geqslant 10$ we have $\ell-1=n-t-3\geqslant \frac{n}{2}-2\geqslant 3$. Then it follows that $\frac{\ell}{2n^{\ell-1}}\leqslant \frac{3}{2n^2}$.
Set
\[
  r_0=\ell-1,
  \qquad
  A=\frac{r_0(r_0-1)}{2n^2},
  \qquad
  \beta=\frac{2r_0-1}{2n^2},
  \qquad
  x=\frac{\ee^\beta}{n},
\]
and
\[
  B=\frac{3}{2}-\frac{\ell-2}{n}.
\]
Since
\[
  (r_0-j)(r_0-j-1)
  \geqslant r_0(r_0-1)-(2r_0-1)j,
\]
we have
\begin{align*}
n\Delta&\leqslant  \sum_{j=0}^{\ell-1}
 \binom{\ell}{j}\frac{1}{n^j}
 \exp\left(
   -\frac{(\ell-j-1)(\ell-j-2)}{2n^2}
 \right)
 \left(\frac{3}{2}-\frac{\ell-2}{n}+\frac{j}{n}\right)+\frac{3}{2n^2}\\
 &\leqslant \sum_{j=0}^{\ell}
 \binom\ell j\frac{1}{n^j}
 \exp\left(
   -\frac{(\ell-j-1)(\ell-j-2)}{2n^2}
 \right)
 \left(\frac{3}{2}-\frac{\ell-2}{n}+\frac{j}{n}\right)+\frac{3}{2n^2}\\
 &\leqslant 
\sum_{j=0}^{\ell}
 \binom{\ell}{j}\frac{1}{n^j}
 \exp\left(
   -\frac{(\ell-1)(\ell-2)}{2n^2}+j\frac{2\ell-3}{2n^2}
 \right)
 \left(\frac{3}{2}-\frac{\ell-2}{n}+\frac{j}{n}\right)+\frac{3}{2n^2}\\
&=\sum_{j=0}^{\ell}
 \binom{\ell}{j}\frac{1}{n^j}\exp(-A+j\beta)\left(B+\frac{j}{n}\right)+\frac{3}{2n^2}\\
&=e^{-A}\sum_{j=0}^{\ell}
 \binom{\ell}{j}x^{j}\left(B+\frac{j}{n}\right)+\frac{3}{2n^2}\\
&=e^{-A}(1+x)^{\ell}\left(B+\frac{\ell x}{n(1+x)}\right)+\frac{3}{2n^2}.
\end{align*}
Hence
\begin{equation}\label{eq:Delta-compact}
  n\Delta
  \leq
  \ee^{-A}(1+x)^\ell
  \left(B+\frac{\ell x}{n(1+x)}\right)+\frac{3}{2n^2}
\end{equation}

Put $y=r_0/n$.  From $t+1\leqslant n/2$ we have
$y\geqslant \frac{1}{2}-\frac{2}{n}$. Moreover,
\[
  A=\frac{y^2}{2}-\frac{y}{2n},
  \qquad
  B=\frac{3}{2}-y+\frac{1}{n}.
\]
Since $\beta\leqslant 1/n$ and
$\ee^{1/n}\leqslant n/(n-1)$, we have $x\leqslant 1/(n-1)$.  Therefore
\begin{align*}
  -A+\ell\ln(1+x)\leqslant& -A+\ell x\\
  &\leqslant -\frac{y^2}{2}+\frac{y}{2n}+\frac{\ell}{n-1}\\
  &=-\frac{y^2}{2}+\frac{\ell-1}{n}+\frac{y}{2n}+\frac{\ell}{n-1}-\frac{\ell-1}{n}\\
  &=-\frac{y^2}{2}+y+\frac{y}{2n}+\frac{n+\ell-1}{n(n-1)}\\
  &\leqslant -\frac{y^2}{2}+y+\frac{y}{2n}+\frac{y}{n-1}+\frac{1}{n-1}\\
  &\leqslant -\frac{y^2}{2}+y+\frac{1}{2n}+\frac{1}{n-1}+\frac{1}{n-1}\\
   &\leqslant -\frac{y^2}{2}+y+\frac{3}{n}
\end{align*}
for $n\geqslant 5$.
Thus
\begin{equation} \label{eq:exponent-bound}
-A+\ell\ln(1+x)\leqslant -\frac{y^2}{2}+y+\frac{3}{n}
\end{equation}
Similarly,
\begin{equation}  \label{eq:factor-bound}
  B+\frac{\ell x}{n(1+x)}\leqslant B+\frac{\ell \frac{1}{n-1}}{n(1+\frac{1}{n-1})}=B+\frac{\ell}{n^2}\leqslant B+\frac{1}{n}=\frac{3}{2}-y+\frac{2}{n}.
\end{equation}
Hence
$$n\Delta\leqslant e^{y-y^2/2+3/n}\left(\frac{3}{2}-y+\frac{2}{n}\right)+\frac{3}{2n^2}$$
The function
$$\Phi_n(y)=
  \ee^{y-y^2/2+3/n}
  \left(\frac{3}{2}-y+\frac{2}{n}\right)
$$
is decreasing for $y\geqslant \frac{1}{2}-\frac{2}{n}$.  Indeed, its logarithmic derivative is
$$1-y-\frac{1}{3/2-y+2/n}\leqslant 0,$$
because
$$(1-y)\left(\frac{3}{2}-y+\frac{2}{n}\right)
  \leq
  \left(\frac{1}{2}+\frac{2}{n}\right)\left(1+\frac{4}{n}\right)
  \leqslant 1
$$
for $n\geqslant 10$.
It follows from \eqref{eq:Delta-compact}--\eqref{eq:factor-bound} that
\begin{equation}\label{eq:Delta-exp-bound}
  n\Delta
  \leq
  \ee^{\,3/8+2/n-2/n^2}
  \left(1+\frac{4}{n}\right)+\frac{3}{2n^2}.
\end{equation}
From \eqref{eq:delta-lower} we have
$$ 2n\delta_n\geqslant
  \frac{3}{2}+\frac{23n^2+34n-120}{2n^3}.$$
So it is enough to show that for $n\geqslant 10$ we have
$$\frac{3}{2}+\frac{23n^2+34n-120}{2n^3}>\ee^{\,3/8+2/n-2/n^2}\left(1+\frac{4}{n}\right)+\frac{3}{2n^2}.$$
This is indeed true as $e^{3/8}<\frac{3}{2}$ and $e^{2/n-2/n^2}<\left(1+\frac{1}{n-1}\right)^2$ and
\begin{equation} \label{eq:final-numerical-check}
\frac{3}{2}+\frac{23n^2+34n-120}{2n^3}-\frac{3}{2}\left(1+\frac{1}{n-1}\right)^2\left(1+\frac{4}{n}\right)-\frac{3}{2n^2}=\frac{5n^4 - 12n^3 - 159n^2 + 271n - 120}{2n^3(n-1)^2}
\end{equation}
After substituting $n=10+x$ into the numerator in \eqref{eq:final-numerical-check} it is equal to $5x^4+188x^3+2481x^2+13491x+24690$, so the numerator is positive for $n\geqslant 10$.  Combining
\eqref{eq:delta-lower} and
\eqref{eq:final-numerical-check} yields $n\Delta\leqslant 2n\delta_n$, proving
\eqref{eq:contraction-bound}.
\end{proof}

For a forest $S$ in $K_n$, let $\boldsymbol a(S)$ be the vector of the sizes
of its components, including isolated vertices, and define
\[
  d(S)=Q_n-Q(\boldsymbol a(S)).
\]
The preceding lemmas imply the following form that will be used in the final
counting argument.

\begin{Cor}\label{cor:defect-growth}
Let $n\geqslant 10$.  If $S\neq\varnothing$ is a forest with $t$ edges, $S+e$ is a
forest, and $t+1\leqslant n/2$, then
\begin{equation}\label{eq:defect-growth}
  d(S+e)\leqslant 3d(S).
\end{equation}
\end{Cor}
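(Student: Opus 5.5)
The plan is to combine the contraction lemma with the minimum contraction defect. Write $\boldsymbol a=\boldsymbol a(S)$. Since $S$ is a forest with $t$ edges, $\boldsymbol a$ has exactly $n-t$ coordinates. Adding the edge $e$ merges two components, of sizes $a$ and $b$ say, so $\boldsymbol a(S+e)=(a+b,\boldsymbol c)$ where $\boldsymbol a=(a,b,\boldsymbol c)$ and $\boldsymbol c$ has length $\ell=n-t-2$. By definition of $d$ we then have
\[
 d(S+e)=Q_n-Q(a+b,\boldsymbol c)=d(S)+\bigl(Q(a,b,\boldsymbol c)-Q(a+b,\boldsymbol c)\bigr).
\]

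The first step is to bound the increment. The hypotheses $n\geqslant 10$, $t\geqslant 1$ (because $S\neq\varnothing$) and $t+1\leqslant n/2$ are exactly those of Lemma~\ref{lem:contraction}. That lemma therefore gives
\[
 Q(a,b,\boldsymbol c)-Q(a+b,\boldsymbol c)\leqslant 2\delta_n .
\]

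The second step is to absorb $\delta_n$ into $d(S)$. Since $S\neq\varnothing$, the vector $\boldsymbol a(S)$ is a positive integral vector of total mass $n$ different from $(1,\dots,1)$. Corollary~\ref{cor:min-defect} then yields $d(S)=Q_n-Q(\boldsymbol a(S))\geqslant \delta_n$. Combining the two steps,
\[
 d(S+e)\leqslant d(S)+2\delta_n\leqslant d(S)+2d(S)=3d(S).
\]

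There is no real obstacle here, since all the analytic work is contained in Lemma~\ref{lem:contraction}. The only points to verify are bookkeeping: that the length of $\boldsymbol c$ is indeed $\ell=n-t-2$, so the lemma applies with the given $t$, and that the condition $t\geqslant 1$ in that lemma is guaranteed by $S\neq\varnothing$.
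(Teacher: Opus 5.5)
Your proposal is correct and is essentially the paper's own proof. You write $d(S+e)=d(S)+Q(\boldsymbol a(S))-Q(\boldsymbol a(S+e))$, bound the increment by $2\delta_n$ using Lemma~\ref{lem:contraction}, and then use $\delta_n\leqslant d(S)$ from Corollary~\ref{cor:min-defect}, as the paper does.
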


\begin{proof}
By Lemma~\ref{lem:contraction} and Corollary~\ref{cor:min-defect},
\[
  d(S+e)
  =d(S)+Q(\boldsymbol a(S))-Q(\boldsymbol a(S+e))
  \leqslant d(S)+2\delta_n
  \leq3d(S).
\]
\end{proof}

Next we give the proof of Lemma~\ref{Q_n-upper-bound}.

\begin{proof}[Proof of Lemma~\ref{Q_n-upper-bound}]
Write $F_n=F(K_n)$ as before. By Cayley's formula,
$T(K_n)=n^{n-2}$. The cases $n\leqslant  2$ are immediate, so assume $n\geq 3$.

Let $R(z)=\sum_{m\geq1}m^{m-1}\frac{z^m}{m!}$. It is well-known that $R=ze^R$, and that the exponential generating function of unrooted labelled trees is
$U(z)=R(z)-\frac{R(z)^2}{2}$, so the exponential generating function of labelled forests is $e^{U(z)}$.
Lagrange inversion gives
\begin{equation} \label{F_n-formula}
F_n
=n![z^n]e^{U(z)}
=(n-1)![u^{n-1}](1-u)e^{(n+1)u-u^2/2}.
\end{equation}
Define
$$L_j=j![u^j]e^{(n+1)u-u^2/2}.$$
Note that
$$\sum_{k\geqslant 0}\He_{k}(x)\frac{t^k}{k!}=e^{xt-t^2/2}.$$
Thus $L_j=\He_j(n+1)$ with our previous notation, hence it satisfies the following recursion:
$$
L_0=1,
\qquad L_1=n+1,
\qquad
L_j=(n+1)L_{j-1}-(j-1)L_{j-2},
$$
and \eqref{F_n-formula} yields
\begin{equation} \label{F_n-formula-2}
F_n=L_{n-1}-(n-1)L_{n-2}=2L_{n-2}-(n-2)L_{n-3}.
\end{equation}

Put $a=n+1$ and $r_j=L_j/L_{j-1}$. For $1\leq j\leq n-2$,
\[
r_j=a-\frac{j-1}{r_{j-1}},
\qquad n<r_j\leq a.
\]
Consequently,
\[
r_j
\leq a-\frac{j-1}{a-(j-2)/a}
=a\frac{a^2-2j+3}{a^2-j+2}.
\]
Using \eqref{F_n-formula-2} and the monotonicity of $x\mapsto 2-(n-2)/x$, we obtain
\begin{equation} \label{Q_n-bound-1}
\frac{F_n}{n^{n-2}}
\leq
\left(1+\frac{1}{n}\right)^{n-2}
\prod_{j=1}^{n-2}\frac{a^2-2j+3}{a^2-j+2}\,
\frac{n^3+3n^2+13n+26}{(n+1)(n^2+8)}.
\end{equation}

For $j\geqslant 2$, set
\[
x_j=\frac{2j-3}{a^2},
\qquad
y_j=\frac{j-2}{a^2}.
\]
Since $0\leq y_j\leq x_j<1$,
\[
\ln\left(\frac{1-x_j}{1-y_j}\right)=-\int_{y_j}^{x_j}\frac{1}{1-t}\ dt\leqslant -\int_{y_j}^{x_j}(1+t+t^2)\ dt
=
-\sum_{s=1}^{3}\frac{x_j^s-y_j^s}{s}.
\]
The three elementary power sums therefore turn \eqref{Q_n-bound-1} into
\begin{align}
\ln\left(\frac{F_n}{n^{n-2}}\right)
\leqslant&
(n-2)\ln\left(1+\frac{1}{n}\right)
+\ln\frac{n^3+3n^2+13n+26}{(n+1)(n^2+8)} \\
&-\frac{(n-3)(n-2)}{2(n+1)^2}
-\frac{(n-3)(n-2)(2n-7)}{4(n+1)^4}
\notag\\
&-\frac{(n-3)^2(n-2)(7n-26)}{12(n+1)^6}.
\end{align}

From Lemma~\ref{Lemma-ln(1+x)} we use the bounds
$$\ln\left(1+\frac{1}{n}\right)\leqslant \frac{1}{n}-\frac{1}{2n^2}+\frac{1}{3n^3}$$
and the bound
$$\ln(1+s)\leqslant \frac{s(2+s)}{2(1+s)},$$
where $s=\frac{2n^2+5n+18}{(n+1)(n^2+8)}<1$ for $n>2$. 
This gives the following upper bound for $\ln(Q_n)$;
\begin{align*}
\ln(Q_n)\leqslant &\frac{6n^{14} + 84n^{13} + 576n^{12} + 2765n^{11} + 9461n^{10} + 24722n^9 + 40743n^8}{12(n^3 + 3n^2 + 13n + 26)(n^2 + 8)(n + 1)^6n^3}\\
&
\quad +\frac{50723n^7 - 5236n^6- 83164n^5 + 71916n^4 - 100640n^3 - 14960n^2 - 7488n - 1664}{12(n^3 + 3n^2 + 13n + 26)(n^2 + 8)(n + 1)^6n^3}
\end{align*}

This is less than $\frac{1}{2}+\frac{5}{2n}-\frac{3}{2n^2}$ because

\begin{align*}&\frac{1}{2}+\frac{5}{2n}-\frac{3}{2n^2}-RHS\\
&=\frac{139n^{11} + 1561n^{10} + 7078n^9 + 31407n^8 + 73057n^7 + 149296n^6}{12(n^3 + 3n^2 + 13n + 26)(n^2 + 8)(n + 1)^6n^3}\\
&\quad \quad +\frac{n^4(177736n - 58722) + n^2(74156n - 3136) + 3744n + 1664}{12(n^3 + 3n^2 + 13n + 26)(n^2 + 8)(n + 1)^6n^3}
\end{align*}
which is clearly positive for $n\geqslant 1$.
Hence 
$$Q_n<\exp\left(\frac{1}{2}+\frac{5}{2n}-\frac{3}{2n^2}\right).$$
\end{proof}

Next we consider the inequality $D_1\geq D_2$. This requires a bound $d(S+e)\leq \alpha d(S)$ for $|S|=1$ with sufficiently small $\alpha$.

\begin{Lemma}[Two-edge estimate]\label{lem:two-edge}
Let $R$ be any fixed two-edge forest in $K_n$.  Then for any $\alpha>2$ there exists an $n(\alpha)$ such that for $n\geqslant n(\alpha)$ we have
\begin{equation}\label{eq:two-edge-defect}
  Q_n-Q(\boldsymbol a(R))\leqslant \alpha\delta_n.
\end{equation}
One can choose $n(\alpha)=\lceil\frac{5\alpha-7}{\alpha-2}\rceil$, and $\alpha=2.04$ works for all $n\geqslant 4$. 
\end{Lemma}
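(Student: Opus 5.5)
The plan is to reduce the statement to a comparison of two explicit one-step merge defects and then compare the two merge formulas term by term. A two-edge forest $R$ in $K_n$ is either a path on three vertices, with $\boldsymbol a(R)=P_2=(3,1^{n-3})$, or two disjoint edges, with $\boldsymbol a(R)=(2,2,1^{n-4})$. Since $(3,1^{n-3})$ majorizes $(2,2,1^{n-4})$, Lemma~\ref{lem:Schur-convex} gives $Q(2,2,1^{n-4})\geqslant Q(P_2)$. So the worst case is the path, and it suffices to prove
\[
Q_n-Q(P_2)=\delta_n+\bigl(Q(P_1)-Q(P_2)\bigr)\leqslant \alpha\delta_n,
\]
that is,
\[
\Delta':=Q(P_1)-Q(P_2)\leqslant (\alpha-1)\delta_n .
\]

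Next I would write both $\delta_n$ and $\Delta'$ exactly via Lemma~\ref{lem:merge-formula}, with all $c_i=1$, so that $E_j=\binom{\ell}{j}$.

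For $\delta_n$ I take $a=b=1$, so $s=2$, $u=2$ and $\ell=n-2$. This gives
\[
n^{n-2}\delta_n=\sum_j \binom{n-2}{j}C_{n-3-j}(2,2)+\Bigl(1-\tfrac{n-2}{2}\Bigr).
\]

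For $\Delta'$ I take $a=2$, $b=1$, so $s=3$, $u=\tfrac32$ and $\ell=n-3$. This gives
\[
n^{n-3}\Delta'=\sum_j\binom{n-3}{j}C_{n-4-j}\bigl(3,\tfrac32\bigr)+\Bigl(1-\tfrac{n-3/2}{3}\Bigr).
\]

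After normalizing by $H_r=n^rh_r$, the coefficients satisfy
\[
\frac{C_r(2,2)}{n^r}=2h_r-\Bigl(\tfrac12-\tfrac1n+\tfrac rn\Bigr)h_{r-1},\qquad
\frac{C_r(3,\tfrac32)}{n^r}=\tfrac32h_r-\Bigl(\tfrac13-\tfrac1{2n}+\tfrac rn\Bigr)h_{r-1}.
\]

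To compare the two sums I would reindex so that both run over the same $r$. Pascal's rule $\binom{n-2}{j}=\binom{n-3}{j}+\binom{n-3}{j-1}$ splits the $\delta_n$ sum into two pieces: one with the same binomial weights as $\Delta'$, and a second shifted piece. Both pieces are nonnegative, since $C_r(2,2)>0$ as shown in the proof of Lemma~\ref{lem:contraction}. The key pointwise inequality I would aim for is that, for every $r$ in range,
\[
C_r\bigl(3,\tfrac32\bigr)\leqslant(\alpha-1)\,C_r(2,2)\quad\text{up to an error of relative size }O(1/n),
\]
with the dominant discrepancy coming from the boundary terms. Using $h_{r-1}\geqslant h_r$ and $H_r\geqslant (n-1)H_{r-1}$ from Lemma~\ref{lem:Hermite-bounds}, each such comparison becomes linear in $h_r/h_{r-1}\in[1-\frac1n,1]$. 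It therefore only needs checking at the endpoints, and this yields a condition of the form $(\alpha-2)n\geqslant 5\alpha-7$. The exact constant comes from collecting the $O(1/n)$ boundary contributions: the $r=0$ terms, where $h_{-1}$ is absent, and the final $E_\ell$ terms, whose coefficients are negative and can either be discarded or used. Inverting the condition gives $n(\alpha)=\lceil\frac{5\alpha-7}{\alpha-2}\rceil$. For $\alpha=2.04$ this formula gives a large threshold, so the claim that $\alpha=2.04$ works for all $n\geqslant 4$ would be closed separately. For small $n$ I would compute $\delta_n$ and $\Delta'$ exactly from the formula \eqref{eq:Q-formula}, which is a finite check. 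For the remaining $n$ I would use the sharper estimate obtained by keeping the shifted Pascal piece of $\delta_n$ rather than discarding it.

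I expect the main obstacle to be the term-by-term comparison itself. The ratio of the leading normalized coefficients is close to $1$, roughly $(\tfrac32-\tfrac13)/(2-\tfrac12)$ weighted by $h$, so the condition $\alpha>2$ is delicate. Indeed, $\alpha-1>1$ is needed only to absorb lower-order terms, and those terms must be tracked exactly, with no slack lost in the $h_r$ bounds.
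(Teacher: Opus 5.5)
\textbf{The reduction to the path is backwards, so the hard case is never treated.} Schur-convexity (Lemma~\ref{lem:Schur-convex}) gives $Q(\boldsymbol x)\geqslant Q(\boldsymbol y)$ when $\boldsymbol x$ majorizes $\boldsymbol y$. Since $(3,1^{n-3})$ majorizes $(2,2,1^{n-4})$, this yields $Q(P_2)\geqslant Q(2,2,1^{n-4})$, which is exactly \eqref{eq:majorization-bound}. So the path has the \emph{smaller} defect, and the extremal two-edge forest is the pair of disjoint edges. Already for $n=4$ you have $Q(3,1)=1+\frac13>Q(2,2)=1+\frac14$.

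Consequently, even a complete proof of $\Delta'\leqslant(\alpha-1)\delta_n$ settles only the easy case. By the paper's table, $(Q_n-Q(P_2))/\delta_n$ lies between about $1.56$ and $1.71$ for $4\leqslant n\leqslant 20$, and it is asymptotically $4/3$. So the path case holds with room to spare, and no threshold of the form $(\alpha-2)n\geqslant 5\alpha-7$ can come out of it. In your sketch that condition is read off from the statement, not derived. Your estimate of the leading coefficient ratio also drops the $r/n\approx 1$ terms; with them, $C_r(3,\tfrac32)/C_r(2,2)\approx\frac{1/6}{1/2}=\frac13$.

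\textbf{What is missing is the case $\boldsymbol a(R)=(2,2,1^{n-4})$.} In your framework this means writing
$Q_n-Q(2,2,1^{n-4})=\delta_n+\bigl(Q(2,1^{n-2})-Q(2,2,1^{n-4})\bigr)$.
You would then need to show that the second merge satisfies $Q(2,1^{n-2})-Q(2,2,1^{n-4})\leqslant(\alpha-1)\delta_n$. This merge joins two singletons in the presence of a $2$-block, so \eqref{eq:merge-formula} is used with $E_j=e_j(\tfrac12,1^{n-4})$. Here the ratio of the two sides tends to $1$, and it actually exceeds $1$ for moderate $n$: about $1.0366$ at $n=19$. Only an $O(1/n)$ margin coming from $\alpha-2$ is available, so a term-by-term comparison would have to be carried out exactly. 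Nothing in your proposal does this.

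For comparison, the paper avoids merge formulas at this point. It proceeds as follows.
\begin{itemize}
\item It writes $Q(\boldsymbol a(R))=\sum_k\frac{f_{n,k}}{T_n}r_k(R)$, where $r_k(R)=\Pr(R\subseteq\mathcal F_{n,k})/\Pr(R\subseteq\mathcal F_{n,1})$.
\item It expresses $r_k(R)$ through the expected number $\binom{n-k}{2}-\mathbb E A$ of disjoint edge pairs in a uniform $k$-component forest.
\item It bounds $\mathbb E A$ using the degree-sequence count for trees. This gives $r_k(R)\geqslant 1-\alpha\frac{k-1}{n-1}$ provided $n\geqslant\frac{5\alpha-7}{\alpha-2}$.
\item It sums this bound against \eqref{eq:delta-sum}.
\item It settles $\alpha=2.04$ for $4\leqslant n\leqslant 80$ by exact computation.
\end{itemize}
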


\begin{proof}
Since the function $Q(\boldsymbol{a})$ is Schur-convex we have
$$\max_{|R|=2}(Q_n-Q(\boldsymbol a(R))=\max\left(Q_n-Q(3,1^{n-3}),Q_n-Q(2,2,1^{n-4})\right)=Q_n-Q(2,2,1^{n-4}).$$
This means that we only need  to consider the case of two vertex-disjoint edges. So, suppose that the two edges of $R$ are disjoint. 
Let $\cF_{n,k}$ be a uniformly random $k$-component spanning forest of $K_n$,
and put
\[
  r_k(R)=
  \frac{\Pr(R\subseteq\cF_{n,k})}
       {\Pr(R\subseteq\cF_{n,1})}.
\]
We shall prove that
\begin{equation}\label{eq:rk-bound}
  r_k(R)\geqslant 1-\alpha \frac{(k-1)}{n-1}.
\end{equation}
if $n\geqslant \frac{5\alpha-7}{\alpha-2}$. This is trivial for $k=1$, so we assume $k\geqslant 2$ from now on.
\medskip

For a forest $F$, let
\[
  A(F)=\sum_v\binom{\deg_F(v)}2,
\]
the number of unordered adjacent pairs of edges.  
 The total number of
unordered pairs of edges in a $k$-component forest is
$\binom{n-k}{2}$.  Thus the number of disjoint pairs is
\[
  M(F)=\binom{n-k}{2}-A(F).
\]
Note that
$$r_k(R)=\frac{\mathbb{E}M}{\mathbb{E}M_{\mathrm{tree}}}.$$
The number of labelled trees on $s$ vertices with degree sequence $d_1,\dots, d_s$ satisfying $d_1+\dots+d_s=2(s-1)$ and $d_i\geqslant 1$ is
$$\binom{s-2}{d_1-1,\dots ,d_s-1}.$$
Since 
$$\sum_{d_1,\dots ,d_s}\binom{s-2}{d_1-1,\dots ,d_s-1}x_1^{d_1-1}\dots x_s^{d_s-1}=(x_1+\dots +x_s)^{s-2}$$
Differentiating once and twice with respect to $x_1$, respectively, gives 
$$\sum_{d_1,\dots ,d_s}\binom{s-2}{d_1-1,\dots ,d_s-1}(d_1-1)x_1^{d_1-2}\dots x_s^{d_s-1}=(s-2)(x_1+\dots +x_s)^{s-3}$$
$$\sum_{d_1,\dots ,d_s}\binom{s-2}{d_1-1,\dots ,d_s-1}(d_1-1)(d_1-2)x_1^{d_1-3}\dots x_s^{d_s-1}=(s-2)(s-3)(x_1+\dots +x_s)^{s-4}.$$
Plugging $x_1=\dots =x_s=1$ and using the symmetry of the vertices we get that for a uniform tree on $s$ vertices we have
\[
  \mathbb E\left[\sum_v\binom{\deg(v)}2\right]
  =\frac{3(s-1)(s-2)}{2s}
\]
For $s=1$, the same formula holds trivially.
By conditioning on the component
sizes $s_1,\dots,s_k$ of a $k$-forest we get that
\[
  \mathbb E(A\mid s_1,\dots,s_k)
  =\frac{3}{2}\left(n-3k+2\sum_{i=1}^k\frac{1}{s_i}\right).
\]
For positive integers $s_i$ with sum $n$,
\[
  \sum_{i=1}^k\frac{1}{s_i}
  \leqslant k-1+\frac{1}{n-k+1}.
\]
Consequently,
\[
  \mathbb EA\leq
  \frac{3}{2}\left(n-k-2+\frac{2}{n-k+1}\right).
\]
Then
\begin{align*}
&\mathbb{E}M-\left(1-\alpha\frac{(k-1)}{n-1}\right)\mathbb{E}M_{\mathrm{tree}}\\
&\quad \geqslant \binom{n-k}{2}-\frac{3}{2}\left(n-k-2+\frac{2}{n-k+1}\right)-\left(1-\alpha\frac{(k-1)}{n-1}\right)\left(\binom{n-1}{2}-\frac{3}{2}\left(n-3+\frac{2}{n}\right)\right)\\
&\quad =(k-1)\left(\left(\frac{\alpha}{2}-1\right)n+\left(3-\frac{5}{2}\alpha\right)+\frac{k-1}{2}-\frac{3}{n(n-k+1)}+\frac{3\alpha}{n}\right)
\end{align*}
Here $\frac{3\alpha}{n}>\frac{6}{n}>\frac{3}{n(n-k+1)}$. We also have $k\geqslant 2$ implying $\frac{k-1}{2}\geqslant \frac{1}{2}$, thus
$$\mathbb{E}M-\left(1-\alpha\frac{(k-1)}{n-1}\right)\mathbb{E}M_{\mathrm{tree}}\geqslant (k-1)\left(\left(\frac{\alpha}{2}-1\right)n+\left(\frac{7}{2}-\frac{5}{2}\alpha\right)\right)$$
which is non-negative if $n\geqslant n(\alpha)=\lceil\frac{5\alpha-7}{\alpha-2}\rceil$.

Finally,
\[
  Q(\boldsymbol a(R))
  =\sum_{k=1}^n\frac{f_{n,k}}{T_n}r_k(R).
\]
Therefore, by \eqref{eq:rk-bound} and \eqref{eq:delta-sum},
\[
  Q_n-Q(\boldsymbol a(R))
  \leqslant
  \alpha \sum_{k=2}^n\frac{f_{n,k}}{T_n}\frac{k-1}{n-1}
  = \alpha\delta_n.
\]
To prove the claim about $\alpha=2.04$, first observe that 
$n(2.04)=\frac{5\cdot 2.04-7}{2.04-2}=80$, thus for $n\geqslant 80$ the previous statement confirms the claim.
It remains to verify $4\leqslant n\leqslant 80$. Exact rational computation gives
that $\frac{Q_n-Q(2,2,1^{n-4})}{\delta_n}\leqslant 2.04$ for $4\leqslant n\leqslant 80$. In fact, we have $n(2.03)=105$ and
\begin{equation} \label{eq:Q-ineq-2}
\max_{4\leqslant n\leqslant 105}\frac{Q_n-Q(2,2,1^{n-4})}{\delta_n}=\frac{Q_{19}-Q(2,2,1^{15})}{\delta_{19}}\approx 2.03660399434459...
\end{equation}
which shows that this is the maximum for all $n$.  This completes the proof.
\end{proof}

\begin{Rem}

For completeness, we share the computational results for $n\leqslant 20$. Further data is available in the Jupyter notebook.
\bigskip

\begin{center}
\begin{tabular}{|c|c|c|}  \hline
$n$ & $(Q_n-Q(2,2,1^{n-4}))/\delta_n$ & $(Q_n-Q(3,1^{n-3}))/\delta_n$ \\ \hline
4 & 1.80000000000000 & 1.66666666666667 \\ \hline
5 &1.87606837606838 &1.69800569800570 \\ \hline
6 &1.92703862660944 &1.70815450643777 \\ \hline
7 &1.96139379375196 &1.70626545467210 \\ \hline
8 &1.98479763079961 &1.69786113853241 \\ \hline
9 &2.00092566817026 &1.68615930141107 \\ \hline
10 &2.01215245842404 &1.67300132181566 \\ \hline
11 &2.02002434873254 &1.65942696120447 \\ \hline
12 &2.02556253347287 &1.64601256025117 \\ \hline
13 &2.02945257898197 &1.63306657477311 \\ \hline
14 &2.03216260579231 &1.62074242848301 \\ \hline
15 &2.03401756911918 & 1.60910399613953 \\ \hline
16 &2.03524656688534 &1.59816389729430 \\ \hline
17 &2.03601344258242 & 1.58790605540397 \\ \hline
18 &2.03643690838386 & 1.57829905341418 \\ \hline
19 &2.03660399434459 &1.56930404644049 \\ \hline
20 &2.03657917958015 & 1.56087942331356 \\ \hline
\end{tabular}
\end{center}

\end{Rem}

Now we are ready to complete the argument in the introduction of this section.

\begin{proof}[Proof of Theorems~\ref{forest-tree-ratio-3} and  \ref{forest-tree-ratio-3-quantitative}]
Corollary~\ref{cor:min-defect} gives that $d(S)\geqslant 0$ for every $S$. For $n\geqslant 10$ Corollary~\ref{cor:defect-growth} shows that if $S\neq\varnothing$ is a forest with $t$ edges, $S+e$ is a
forest, and $t+1\leqslant \frac{n}{2}$, then we have
\begin{equation}\label{eq:defect-growth}
  d(S+e)\leqslant 3d(S).
\end{equation}
 By the inequality \eqref{eq:Ds-ratio} we have
$\frac{D_2}{D_1}\leqslant \frac{2\cdot 2.04(h-1)}{2n}<1$
if $h \leqslant \frac{1}{2.04}n+1$. Note that $\frac{1}{2.04}>0.49$. By \eqref{eq:Ds-ratio} we also have $\frac{D_2}{D_1}\leqslant \frac{2\alpha(h-1)}{2n}$ if $\alpha>2$ and $n\geqslant n(\alpha)$. By choosing $2<\alpha<\frac{1}{1/2-\varepsilon}$ this implies that for sufficiently large $n$ we have $D_1>D_2$ whenever $h\leqslant \left(\frac{1}{2}-\varepsilon\right)n$. This immediately completes the proof of Theorem~\ref{forest-tree-ratio-3}. 
Theorem~\ref{forest-tree-ratio-3-quantitative} also follows immediately from Lemma~\ref{lem:two-edge} and Corollary~\ref{cor:defect-growth}.
\end{proof}

\begin{proof}[Proof of Theorem~\ref{forest-tree-ratio-thm}]
For $n\geqslant 10$ the theorem immediately follows from  Theorems~\ref{forest-tree-ratio-2-quantitative} and \ref{forest-tree-ratio-3-quantitative}. As before let $h=\binom{n}{2}-m$, where $m$ is the number of edges of $G$. For $n=8, 9$ observe that  Theorem~\ref{forest-tree-ratio-2-quantitative} shows that $\lfloor 0.49n +1/2\rfloor = 4$, so if $h\geqslant 4$, then we are done. If $0<h\leqslant 3$, then we only need to check the inequality $D_1>D_2$ since $D_3\geqslant 0$.  By the inequality \eqref{eq:Ds-ratio}  we indeed have $\frac{D_2}{D_1}\leqslant \frac{2\cdot 2.04\cdot 2}{2\cdot 8}<1$. For $n\leqslant 7$ one can check the statement by computer, see the Jupyter notebook. (For $n=7$ one can also use the same argument as for $n=8,9$, but instead of $\lfloor 0.49n+1/2\rfloor$ we need to use the value $h_7=4$ from Table 1.) 
\end{proof}

\section{Miscellaneous remarks}

In this section, we collect various remarks.

\subsection{How natural is Theorem~\ref{ratio-minimum}?}

Let $P(x)=\sum_{k=0}^ra_kx^k$ and $Q(x)=\sum_{j=0}^sb_jx^j$ be polynomials with non-negative coefficients, and put $P(1)=A$ and $Q(1)=B$. If $R(x)=P(x)Q(x)=\sum_{t=0}^{r+s}c_tx^t$ and $R(1)=C$, then we have
$$\ln\left(\frac{C}{c_{r+s}}\right)=\ln\left(\frac{A}{a_r}\right)+\ln\left(\frac{B}{b_s}\right)\ \ \ \ \text{and}\ \ \ \ \frac{c_{r+s-1}}{c_{r+s}}=\frac{a_{r-1}b_{s}+a_rb_{s-1}}{a_rb_s}=\frac{a_{r-1}}{a_r}+\frac{b_{s-1}}{b_s},$$
so both expressions are additive. Furthermore, if $P(x)=\prod_{k=1}^r(1+\lambda_kx)$, where $\lambda_1,\dots , \lambda_r$ are positive real numbers, then
$$\ln \left(\frac{A}{a_r}\right)=\sum_{k=1}^r\ln\left(1+\frac{1}{\lambda_k}\right)\leqslant \sum_{k=1}^r\frac{1}{\lambda_k}=\frac{a_{r-1}}{a_r}.$$
In fact, we can improve this inequality by observing that the function $x\ln\left(1+\frac{1}{x}\right)$ is increasing. If $\lambda_r=\max_{1\leqslant k\leqslant r}\lambda_k$, then
$$\ln\left(\frac{A}{a_r}\right)=\sum_{k=1}^r\ln\left(1+\frac{1}{\lambda_k}\right)\leqslant  \sum_{k=1}^r\frac{\lambda_r}{\lambda_k}\ln\left(1+\frac{1}{\lambda_r}\right)=\frac{a_{r-1}}{a_r}\cdot \lambda_r\ln\left(1+\frac{1}{\lambda_r}\right).$$
Thus, for real-rooted $P(x)$ we have, in fact, an opposite inequality. A notable example is the matching generating polynomial. Let $m_k(G)$ be the number of matchings of size $k$ of a graph $G$. Then the polynomial $\sum_{k=0}^rm_k(G)x^k$ is real-rooted, and all $\lambda_i$ are at most $4(\Delta-1)$ if $\Delta\geqslant 2$, where $\Delta$ is the maximum degree of the graph, see for instance \cite{heilmann1972theory}.

Theorem~\ref{ratio-minimum} is approximately tight when there is a hidden Poisson distribution in the following sense. If $\frac{a_{r-1}}{a_r}=\alpha$ and $\frac{a_{r-j}}{a_r}\sim \frac{\alpha^j}{j!}$, then, under suitable tail assumptions, we have
$$\frac{A}{a_r}\sim \sum_{j=0}^r \frac{\alpha^j}{j!}\sim e^{\alpha}.$$
This is exactly the case with the complete graph $K_n$, where $\frac{F_k(K_n)}{T(K_n)}\sim \frac{1}{2^{k-1}(k-1)!}$.
This cannot work for general matroids, because if $M=(E,\mathcal{I})$ is a matroid of rank $r$, then $M'=(E,\mathcal{I}')$ with $\mathcal{I}'=\{S\in \mathcal{I}\ |\ |S|\leqslant r-1\}$ is also a matroid, but then if $I_{r-1}=\alpha I_r$, $I_{r-2}\sim \frac{\alpha^2}{2}I_r$ and $I_{r-3}\sim \frac{\alpha^3}{6}I_r$, then
$$\frac{I_{r-2}}{I_{r-1}}\sim \frac{\alpha}{2},\ \ \text{but}\ \ \ \frac{I_{r-3}}{I_{r-1}}\sim \frac{\alpha^2}{6}\neq \frac{(\alpha/2)^2}{2},$$
so the hidden Poisson distribution is not inherited. Nevertheless, there is some intuition for why Theorem~\ref{ratio-minimum} works quite well in the case of dense graphs. First, most spanning trees of a dense graph have similar shapes, so most local basis exchange graphs $H[A]$ look alike. Second, the bound $\prod_{e\in A}\left(1+\frac{1}{\delta(A-e)}\right)$ is not bad for $\widetilde{T}_{H[A]}(2,1)$ if the neighbors of $e_1, e_2\in A$ are almost disjoint whenever the sizes of $N_{H[A]}(e_1)$ and $N_{H[A]}(e_2)$ are small. In the case of spanning trees, this neighbor set is typically small if the edge $e$ is incident to a leaf vertex of the spanning tree, but if $e_1$ and $e_2$ are both incident to leaf vertices, then there can be at most one non-tree edge $f$ that is adjacent to both $e_1, e_2$ in $H[A]$, namely the edge connecting the leaves (which may not even be an edge of the graph).  

\subsection{The function $Q(\boldsymbol{a})$}
One can prove that if $\boldsymbol{a}=(a_1,\dots ,a_m) \in \mathbb{Z}_{>0}^m$ is such that $a_1+\dots +a_m=n$, then 
$$Q_n-Q(\boldsymbol{a})=\frac{\sqrt{e}}{n}\sum_{i=1}^m\left(1-\frac{1}{a_i}\right)+O\left(\frac{(n-m)^2}{n^2}\right).$$
In fact, the following explicit error term estimate can be given.

\begin{Th} Let $\boldsymbol{a}=(a_1,\dots ,a_m) \in \mathbb{Z}_{>0}^m$ such that $a_1+\dots +a_m=n$.
For $n\geqslant 20$ and $m\geqslant \frac{n}{2}$ we have
$$\left|Q_n-Q(\boldsymbol{a})-\frac{\sqrt{e}}{n}\sum_{i=1}^m\left(1-\frac{1}{a_i}\right)\right|\leqslant 250\frac{(n-m)^2}{n^2}.$$
\end{Th}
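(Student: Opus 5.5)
The natural route is to work directly from formula \eqref{eq:Q-formula},
\[
Q(\boldsymbol a)=n^{2-m}\Bigl(e_m(\boldsymbol x)+\sum_{k=0}^{m-2}H_{m-k-2}e_k(\boldsymbol x)\Bigr),\qquad x_i=a_i^{-1},
\]
and compare it term by term with $Q_n=Q(1^n)$. Write $t=n-m$ for the number of contracted edges, so $t\leqslant n/2$, and set $y_i=1-x_i\in[0,1)$. Only the $t':=\#\{i:a_i\geqslant2\}\leqslant t$ coordinates with $a_i\geqslant 2$ have $y_i\neq 0$, and $\sum_i y_i\leqslant t$. Expanding $e_k(\boldsymbol x)=e_k(1-\boldsymbol y)=\sum_{j}(-1)^j\binom{m-j}{k-j}e_j(\boldsymbol y)$, I will write
\[
Q(\boldsymbol a)=\sum_{j\geqslant0}(-1)^j e_j(\boldsymbol y)\,c_j(n,m),
\]
where $c_j(n,m)$ is an explicit Hermite-binomial sum. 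The $j=0$ coefficient is $Q(1^m\text{ with mass }n)$, which is purely a function of $(n,m)$. The $j=1$ coefficient multiplies $\sum_i(1-1/a_i)$.

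The key steps, in order, are as follows.

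\textbf{Step (i).} Show that the $j=0$ term equals $Q_n$ up to $O(t^2/n^2)$; more precisely, that
\[
Q_n-c_0(n,m)=O(t^2/n^2).
\]
I expect this because merging preserves the first-order count. Concretely, I would write $c_0(n,m)=n^{2-m}\sum_k\binom{m}{k}H_{m-k-2}$ and use the same Lagrange-type representation as in the proof of Lemma~\ref{Q_n-upper-bound}. This gives $c_0$ as $(m-1)![u^{m-1}]$ of an explicit exponential, whose dependence on $m$ near $n$ can be Taylor-expanded.

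\textbf{Step (ii).} Show that
\[
c_1(n,m)=\frac{\sqrt e}{n}+O\!\left(\frac{1}{n^2}+\frac{t}{n^2}\right),
\]
which, multiplied by $\sum y_i\leqslant t$, contributes within the error budget. The single-merge computation already gives this at $t=1$: there $\delta_n=Q_n-Q(2,1^{n-2})=\frac{\sqrt e}{2n}+O(1/n^2)$, and $y=\tfrac12$. For general $t$ I would use the Hermite ratio bounds of Lemma~\ref{lem:Hermite-bounds} and the Gaussian decay \eqref{eq:h-exponential}. With these, the sums defining $c_j$ reduce to binomial sums of the type handled in the proof of Lemma~\ref{lem:contraction}: they are $e^{-A}(1+x)^\ell$-shaped and approximate $e^{1/2}$ with explicit $O(1/n)$ relative corrections.

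\textbf{Step (iii).} Bound the tail $\sum_{j\geqslant2}e_j(\boldsymbol y)\,|c_j(n,m)|$ by $O(t^2/n^2)$. For this I would combine $e_j(\boldsymbol y)\leqslant (\sum y_i)^j/j!\leqslant t^j/j!$ with $|c_j|\leqslant C^j n^{-j}$, the latter proved by the same Hermite-ratio estimates. Since $t/n\leqslant 1/2$, the resulting series converges to $O(t^2/n^2)$ with an explicit constant.

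Finally, I would track every constant explicitly, using $n\geqslant 20$ and $m\geqslant n/2$, to land under $250$. Where needed, the finitely many small cases would be checked numerically, as done elsewhere in the paper.

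The main obstacle is Step (ii) together with the cancellation hidden in Step (i). The coefficients $c_j$ are alternating sums of large Hermite values; naive absolute bounds lose a factor of $n$, so the cancellations must be exhibited exactly. I would handle this first by rewriting each $c_j$ via the generating function $e^{nt-t^2/2}$, as in \eqref{eq:squarefree-lagrange}, to obtain a closed coefficient-extraction form $[u^{m-1-j}](\text{polynomial})\,e^{nu-u^2/2}$. I would then estimate that form by the ratio method of Lemma~\ref{Q_n-upper-bound}, rather than summing binomials.
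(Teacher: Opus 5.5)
The paper states this theorem without proof (``We omit the proof of this theorem''), so there is no argument to compare against. Judged on its own, your proposal has the right skeleton but a genuine gap: Steps (i) and (ii) are only asserted, and the tools you name cannot prove them.

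\textbf{What is sound.} The expansion $Q(\boldsymbol a)=\sum_j(-1)^je_j(\boldsymbol y)c_j(n,m)$ is exact. Step (iii) does follow from the crude bound $H_r\leqslant n^r$.

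\textbf{Why the proposed tools fail for Steps (i) and (ii).}
At $t=1$, Step (i) asks for $|Q_n-c_0(n,n-1)|=O(n^{-2})$, although both numbers are $\sqrt e+O(1/n)$. Estimating them separately would therefore need two-sided bounds accurate to $O(n^{-2})$, uniformly in $m$.

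Lemma~\ref{lem:Hermite-bounds} gives the Gaussian decay \eqref{eq:h-exponential} only as an upper bound. Its lower bound $H_r\geqslant(n-1)H_{r-1}$ yields only $h_r\geqslant(1-1/n)^{r-1}\approx e^{-1}$ at $r\approx n$, whereas the truth is about $e^{-1/2}$. The estimate in Lemma~\ref{lem:contraction} discards terms and is likewise one-sided.

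The one-coefficient form \eqref{F_n-formula} is special to $m=n$. For $m=2$ its analogue $(m-1)![u^{m-1}](1-u)e^{(n+1)u-u^2/2}$ equals $n$, while $n^{m-2}c_0(n,m)=2$.

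The single-merge case does not give (ii) at $t=1$ either. The identity $\delta_n=(Q_n-c_0(n,n-1))+\tfrac12c_1(n,n-1)$ mixes both unknowns. Moreover, the paper proves only a lower bound on $\delta_n$; the asymptotic $\delta_n\sim\frac{\sqrt e}{2n}$ is an unproved remark.

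\textbf{How to close the gap within your framework.}
First, reindexing gives $c_j(n,m)=n^{-j}c_0(n,m-j)$, where $n^{m-2}c_0(n,m)=1+\sum_{s=2}^m\binom ms H_{s-2}$. This settles (iii) via $0<c_0(n,m')\leqslant 1+(1+1/n)^{m'}$ for $m'\geqslant2$. The exceptional values $c_0(n,1)=n$ and $c_0(n,0)=n^2$ occur only when $j\geqslant m-1\geqslant n/2-1$. It also reduces (ii) to (i) for $m'\geqslant n/2-1$, together with $\sqrt e<Q_n\leqslant\sqrt e\,e^{5/(2n)}$ (Theorem~\ref{forest-tree-ratio} and Lemma~\ref{Q_n-upper-bound}).

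Second, for (i), the Appell identity $\He_l(n+1)=\sum_q\binom lq H_q$ and a double hockey-stick sum give $n^{m-2}c_0(n,m)=1+\sum_{l=0}^{m-2}(m-1-l)U_l$ with $U_l=\He_l(n+1)$. Writing $U_l=nU_{l-1}+V_{l-1}$ with $V_k=U_k-kU_{k-1}$, one gets
\[
c_0(n,m)-c_0(n,m-1)=n^{2-m}\Bigl[(m-n)+\sum_{k=0}^{m-3}(m-2-k)V_k\Bigr].
\]
The ratio bounds $n\leqslant U_k/U_{k-1}\leqslant n+1$ for $1\leqslant k\leqslant n+1$ are proved exactly like \eqref{eq:Hermite-ratio}. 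They give $0\leqslant V_k\leqslant(n+1-k)U_{k-1}$ and $U_{k-1}\leqslant n^{-(m-3-k)}U_{m-4}\leqslant e\,n^{k-1}$. Hence each increment lies between $-n^{3-m}$ and $C(n-m+1)/n^2$ for an absolute constant $C$.

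Summing the increments from $m=n-t+1$ to $n$ proves (i). The three pieces then total well under $250\,t^2/n^2$. Once you estimate increments in $m$, the cancellation you worried about is built into the nonnegative $V_k$. Crude one-sided ratio bounds then suffice, and no two-sided asymptotics for $Q_n$ or $c_0(n,m)$ are needed.
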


We omit the proof of this theorem. This result explains quite a lot of the phenomena we have seen previously. For example,
$$\delta_n=Q_n-Q(2,1^{n-2})=\frac{\sqrt{e}}{2n}+O\left(\frac{1}{n^2}\right)$$
and that
$$\frac{Q_n-Q(2,2,1^{n-4})}{\delta_n}=2+O\left(\frac{1}{n}\right).$$
In the latter case, we only gave an upper bound $2+O\left(\frac{1}{n}\right)$ previously, so this is actually a stronger statement.

\subsection{RSW-formula for general matroids}

The RSW formula implies that, for graphic matroids,
we have
$$\frac{I_{r-1}}{I_r}\geqslant \frac{1}{4}\sum_{e\in E}R_{\eff}(e^-,e^+)^2=\frac{1}{4}\sum_{e\in E}\mathbb{P}(e\in \textbf{B})^2,$$
where $\textbf{B}$ is a uniformly random basis (spanning tree) of the graphic matroid. This inequality seems to be rather special for graphic matroids. In general, we can only claim much weaker statements.

\begin{Prop} \label{MSW-general-matroids}
Let $M=(E,\mathcal{I})$ be a matroid of rank $r\geqslant 1$. Let $\textbf{B}$ be a uniformly random basis. Then
$$\frac{1}{r}\sum_{e\in E}\mathbb{P}(e\in \textbf{B})^2\leqslant \frac{I_{r-1}}{I_r}\leqslant \sum_{e\in E}\mathbb{P}(e\in \textbf{B})^2.$$
\end{Prop}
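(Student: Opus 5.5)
Write $p_e=\mathbb{P}(e\in \textbf{B})$ and $\delta(S)$, as before, for the number of bases containing $S\in\mathcal{I}_{r-1}$. The plan is to rewrite both $I_{r-1}/I_r$ and $\sum_e p_e^2$ as double counts over pairs of bases that differ by a single exchange, and then to compare the two sums pair by pair.

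\emph{Expressing $I_{r-1}/I_r$.} Every $S\in\mathcal{I}_{r-1}$ lies in $\delta(S)\geqslant 1$ bases, and the bases containing $S$ are exactly $S+x$ for the $\delta(S)$ elements $x$ with $S+x\in\mathcal{B}$. Hence
$$I_{r-1}=\sum_{S\in\mathcal{I}_{r-1}}1=\sum_{(S,x)\,:\,S+x\in\mathcal{B}}\frac{1}{\delta(S)}=\sum_{B\in\mathcal{B}}\sum_{e\in B}\frac{1}{\delta(B-e)}.$$
Dividing by $I_r$ gives $\frac{I_{r-1}}{I_r}=\mathbb{E}\sum_{e\in\textbf{B}}\frac{1}{\delta(\textbf{B}-e)}$.

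\emph{Expressing $\sum_e p_e^2$.} Write $\sum_e p_e^2=\sum_e p_e\,\mathbb{P}(e\in\textbf{B}')$, where $\textbf{B}'$ is an independent copy of $\textbf{B}$. Then
$$\sum_e p_e^2=\mathbb{E}|\textbf{B}\cap\textbf{B}'|=\frac{1}{I_r^2}\sum_{B,B'}|B\cap B'|.$$
The idea is to regroup this sum through the $(r-1)$-subsets: $\sum_{S\in\mathcal{I}_{r-1}}\delta(S)^2$ counts ordered pairs $(B,B')$ together with a common $(r-1)$-subset. This suggests comparing $\frac{1}{I_r}\sum_{S}\frac{1}{\delta(S)}\cdot(\dots)$ with $\frac{1}{I_r^2}\sum_S\delta(S)^2$, possibly via Cauchy--Schwarz. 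I am not confident this regrouping is right, so the first thing to check is the exact identity to be used. An alternative route is the symmetric exchange property applied to the pairs $(B,B')$.

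\emph{Upper bound, $\frac{I_{r-1}}{I_r}\leqslant\sum_e p_e^2$.} I would first try Cauchy--Schwarz, or Jensen on $1/\delta$, against a weighting chosen so that each $S$ is weighted by $\delta(S)$. If that fails, I would test the bound on the simplest case, the uniform matroid $U_{1,m}$. There $I_{r-1}/I_r=1/m$ and $\sum_e p_e^2=m\cdot(1/m)^2=1/m$, so equality holds, and the lower bound also gives $1/m$ since $r=1$. Since this case is consistent with both inequalities, I would next try the partition matroid $U_{1,a}\oplus U_{1,b}$. Both sides are additive under direct sum, which suggests aiming for an argument that localizes on each element.

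\emph{Lower bound, $\frac{1}{r}\sum_e p_e^2\leqslant\frac{I_{r-1}}{I_r}$.} A natural approach is to use $\sum_e p_e=r$ together with the inequality $\sum_e p_e^2\leqslant \max_e p_e\cdot r$, and then to bound $\max_e p_e$ in terms of $I_{r-1}/I_r$. It remains unclear to me how to carry out this last comparison. I expect this step to be the main obstacle, and I would attack it through the second proof of Theorem~\ref{ratio-minimum}, using its bijective basis exchange.
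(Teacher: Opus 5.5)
\paragraph{The upper bound is not proved.} The main gap is the upper bound $I_{r-1}/I_r\leqslant\sum_e p_e^2$. You offer only Cauchy--Schwarz/Jensen and test cases, and convexity inequalities by themselves cannot give this bound, because it fails for non-matroidal families. For $r\geqslant 2$, take two disjoint $r$-sets as the ``bases'' and their $2r$ subsets of size $r-1$ as $\mathcal{I}_{r-1}$. Then $I_{r-1}=2r$, while $I_r\sum_e p_e^2=2\cdot\frac{r}{2}=r$. So some exchange axiom has to enter, and you do not say where.

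Your suggested regrouping also does not compute the right quantity: $\sum_S\delta(S)^2=\sum_{B,B'}\binom{|B\cap B'|}{r-1}$, not $\sum_{B,B'}|B\cap B'|$.

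\paragraph{The missing idea.} What is missing is a pointwise cut-type bound. For $S\in\mathcal{I}_{r-1}$ put $C(S)=\{e\in E: S+e\in\mathcal{B}\}$. Apply augmentation to $S$ and an arbitrary basis $B$: this gives $e\in B\setminus S$ with $S+e$ independent. Hence every basis meets $C(S)$; indeed $C(S)$ is the cocircuit $E\setminus\mathrm{cl}(S)$. Therefore
\[
1\leqslant \sum_{e\in C(S)}p_e=\mathbb{E}\,|C(S)\cap\mathbf{B}|\leqslant r .
\]
Pairs $(S,e)$ with $e\in C(S)$ correspond bijectively to pairs $(B,e)$ with $e\in B$, via $B=S+e$. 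Let $b_e$ be the number of bases containing $e$, so $b_e=I_rp_e$. Summing over $S$ gives
\[
\sum_{S}\sum_{e\in C(S)}p_e=\sum_e b_e p_e=I_r\sum_e p_e^2 ,
\]
so $I_{r-1}\leqslant I_r\sum_e p_e^2\leqslant rI_{r-1}$. These are both inequalities at once, and this is the paper's proof.

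\paragraph{The lower bound.} Your route here is viable, and the step you single out as the main obstacle is actually trivial. The map $B\mapsto B-e$ injects the bases containing $e$ into $\mathcal{I}_{r-1}$, so $b_e\leqslant I_{r-1}$, that is, $\max_e p_e\leqslant I_{r-1}/I_r$. Then $\sum_e p_e^2\leqslant r\max_e p_e\leqslant rI_{r-1}/I_r$, and no bijective basis exchange is needed.

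This max-times-sum argument differs from the paper's, which gets the lower bound from $\mathbb{E}|C(S)\cap\mathbf{B}|\leqslant r$. It is exactly the template the paper later refines in Proposition~\ref{thm:basic-marginal-bound}, where $b_e\leqslant I_{r-1}$ is sharpened to $b_e\leqslant\frac{m-r+1}{m}I_{r-1}$. As written, though, your proposal does not actually prove either inequality.
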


\begin{proof}
For an $I\in \mathcal{I}_{r-1}$ let
$$C(I)=\{e\in E\ |\ I\cup \{e\}\in \mathcal{I}_r\}.$$
Observe that every basis $B$ contains at least one element of $C(I)$, otherwise the augmentation axiom would fail for $I$ and $B$. Hence
$$1\leqslant \sum_{e\in C(I)}\mathbb{P}(e\in \textbf{B})=\mathbb{E}[|C(I)\cap \textbf{B}|]\leqslant r.$$
Adding up these inequalities for all $I\in \mathcal{I}_{r-1}$ we get that
$$I_{r-1}\leqslant \sum_{I\in \mathcal{I}_{r-1}}\sum_{e\in C(I)}\mathbb{P}(e\in \textbf{B})\leqslant rI_{r-1}.$$
Now observe that
$$\sum_{I\in \mathcal{I}_{r-1}}\sum_{e\in C(I)}\mathbb{P}(e\in \textbf{B})=\sum_{e\in E
}(I_r\cdot \mathbb{P}(e\in \textbf{B}))\cdot \mathbb{P}(e\in \textbf{B})=I_r\sum_{e\in E}\mathbb{P}(e\in \textbf{B})^2$$
as every $e\in E$ appears as an element of $C(I)$ for exactly $I_r\cdot \mathbb{P}(e\in \textbf{B})$ elements of $\mathcal{I}_{r-1}$. Hence 
$$\frac{1}{r}\sum_{e\in E}\mathbb{P}(e\in \textbf{B})^2\leqslant \frac{I_{r-1}}{I_r}\leqslant \sum_{e\in E}\mathbb{P}(e\in \textbf{B})^2.$$
\end{proof}

\begin{Rem}
While for graphic matroids we have
$$\frac{I_{r-1}}{I_r}\geqslant \frac{1}{4}\sum_{e\in E}\mathbb{P}(e\in \textbf{B})^2,$$
for the uniform matroid $U_{r,m}$ we have
$$\frac{I_{r-1}}{I_r}=\frac{m}{r(m-r+1)}\sum_{e\in E}\mathbb{P}(e\in \textbf{B})^2$$
showing that $\frac{1}{r}$ cannot be improved further without knowing $m$. In fact, the constant $\frac{m}{r(m-r+1)}$ gives the best possible lower bound: or every rank-$r$ matroid $M$ on $m$ elements we have
$$\frac{I_{r-1}}{I_r}\geqslant \frac{m}{r(m-r+1)}\sum_{e\in E}\mathbb{P}(e\in \textbf{B})^2.$$
This strengthening of the lower bound in Proposition~\ref{MSW-general-matroids} was proved by ChatGPT 5.5 Pro. See Proposition~\ref{thm:basic-marginal-bound} below. 
One might hope that at least for regular matroids there is a universal positive constant $c$ such that
$$\frac{I_{r-1}}{I_r}\geqslant c\sum_{e\in E}\mathbb{P}(e\in \textbf{B})^2.$$
However, this is not the case: the cographic matroids of complete graphs show that there is no such universal constant $c$ for regular matroids. For more details, see Proposition~\ref{prop:cographic-counterexample} below.
\end{Rem}

\begin{Prop}[ChatGPT 5.5 Pro]\label{thm:basic-marginal-bound}
Let $M=(E,\cI)$ be a matroid of rank $r\geq 1$ on $m$ elements. Let $\textbf{B}$ be a uniformly random basis. Then
\[
 \frac{I_{r-1}}{I_r}
 \geqslant
 \frac{m}{r(m-r+1)}
 \sum_{e\in E}\PP(e\in \textbf{B})^2.
\]
\end{Prop}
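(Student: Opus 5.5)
Write $p_e=\PP(e\in \textbf{B})$ and $N_e=I_r p_e$, the number of bases containing $e$. The plan is to refine the double counting from the proof of Proposition~\ref{MSW-general-matroids}. There we used only the crude bound $\mathbb{E}|C(I)\cap \textbf{B}|\leqslant r$. Here I will subtract the expected number of elements of $\textbf{B}$ lying \emph{outside} $C(I)$, and bound that subtracted term from below by another multiple of $\sum_e p_e^2$. Rearranging should then give the sharp constant.

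\textbf{Step 1 (the identity).} From the proof of Proposition~\ref{MSW-general-matroids} we already have
$$\sum_{I\in\cI_{r-1}}\sum_{e\in C(I)}p_e=I_r\sum_{e\in E}p_e^2 .$$
For fixed $I\in\cI_{r-1}$, let $H_I=\{e\in E:\ I\cup\{e\}\notin\cI\}$; this is the closure of $I$, a hyperplane. Then $C(I)=E\setminus H_I$ and $I\subseteq H_I$. Since every basis has exactly $r$ elements,
$$\sum_{e\in C(I)}p_e=\mathbb{E}|\textbf{B}\cap C(I)|=r-\mathbb{E}|\textbf{B}\cap H_I|\leqslant r-\sum_{e\in I}p_e .$$
Summing over $I$ gives
$$I_r\sum_e p_e^2\leqslant rI_{r-1}-\sum_{e\in E}p_e\cdot \#\{I\in\cI_{r-1}: e\in I\}.$$

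\textbf{Step 2 (lower bound on the subtracted term).} I will show that
$$\#\{I\in\cI_{r-1}:e\in I\}\geqslant \frac{r-1}{m-r+1}N_e .$$
To see this, double count pairs $(B,I)$ with $e\in I\subset B$, where $B$ is a basis and $|I|=r-1$. Each of the $N_e$ bases containing $e$ yields exactly $r-1$ such sets $I$, namely $B-f$ for $f\in B\setminus\{e\}$. On the other hand, each such $I$ lies in $\delta(I)\leqslant m-(r-1)$ bases. Substituting $N_e=I_rp_e$ gives
$$\sum_e p_e\cdot\#\{I\ni e\}\geqslant \frac{r-1}{m-r+1}I_r\sum_e p_e^2 .$$

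\textbf{Step 3 (conclusion).} Combining Steps 1 and 2,
$$I_r\sum_e p_e^2\left(1+\frac{r-1}{m-r+1}\right)\leqslant rI_{r-1}.$$
Since $1+\frac{r-1}{m-r+1}=\frac{m}{m-r+1}$, this is exactly the claimed inequality.

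The main obstacle I expected was finding a lower bound for $\mathbb{E}|\textbf{B}\cap H_I|$ that again produces $\sum_e p_e^2$, rather than something uncontrolled such as a correlation between $p_e$ and the number of independent $(r-1)$-sets containing $e$. The observation that resolves it is that $I\subseteq H_I$, combined with the trivial bound $\delta(I)\leqslant m-r+1$ in the double count. As a sanity check, for $U_{r,m}$ every inequality used is an equality: $H_I=I$ and $\delta(I)=m-r+1$ for all $I$. This is consistent with the remark that the constant $\frac{m}{r(m-r+1)}$ is best possible.
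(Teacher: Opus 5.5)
Your proof is correct. It is essentially the paper's double count, organized from the other side.

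Your Step 2 is exactly the paper's inequality $(r-1)I_{r-1}(M/e)\leqslant (m-r+1)I_{r-2}(M/e)$, written without contracting. Indeed, bases of $M/e$ are the bases of $M$ containing $e$, and independent $(r-2)$-sets of $M/e$ are the independent $(r-1)$-sets of $M$ containing $e$.

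The difference is in Step 1. Write $b_e=I_rp_e$ and $c_e=\#\{I\in\cI_{r-1}: e\in I\}$.
\begin{itemize}
\item The paper fixes $e$ and, via $M/e$, notes that the sets $B\setminus\{e\}$ (with $B$ a basis containing $e$) and the independent $(r-1)$-sets containing $e$ are disjoint subfamilies of $\cI_{r-1}$. This gives the pointwise bound $b_e+c_e\leqslant I_{r-1}$.
\item You instead fix $I$, split $E$ into $C(I)$, $I$ and $\mathrm{cl}(I)\setminus I$, and use $\mathbb{E}|\mathbf{B}|=r$.
\end{itemize}
After weighting by $p_e$ and using $\sum_e p_e=r$, both routes reach the identical inequality $\sum_e p_e(b_e+c_e)\leqslant rI_{r-1}$. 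The slack is also the same, namely $\sum_{I}\sum_{e\in\mathrm{cl}(I)\setminus I}p_e$.

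Your version has the appeal of being a direct sharpening of the proof of Proposition~\ref{MSW-general-matroids}. The paper's organization gives a stronger intermediate statement: $\PP(e\in\mathbf{B})\leqslant\frac{m-r+1}{m}\cdot\frac{I_{r-1}}{I_r}$ holds for every single element $e$. Your argument only yields the $p_e$-weighted average of this.

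One small slip concerns the definition of $H_I$. As literally defined, $H_I=\{e\in E: I\cup\{e\}\notin\cI\}$ contains no element of $I$, since for $e\in I$ the set $I\cup\{e\}=I$ is independent. So for that set both $C(I)=E\setminus H_I$ and $I\subseteq H_I$ fail. You clearly mean the closure $\mathrm{cl}(I)=I\cup\{e: I\cup\{e\}\notin\cI\}$, as you say, and with that definition every step goes through.
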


\begin{proof} Let $\cB(M)$ denote the set of bases of $M$. For $e\in E$, put
$$b_e:=|\{B'\in\cB(M)\ |\ e\in B'\}|, \qquad p_e:=\PP(e\in \textbf{B})=\frac{b_e}{I_r}. $$
Then $\sum_{e\in E} b_e=rI_r$.
We first prove the pointwise estimate
\begin{equation} \label{eq: pointwise}
 b_e\leq \frac{m-r+1}{m}\,I_{r-1}
 \qquad\text{for every }e\in E. 
\end{equation}
If $e$ is a loop, then $b_e=0$, so there is nothing to prove. Assume that $e$ is not a loop. The case $r=1$ is immediate: $I_0=1$, and every non-loop element is itself a basis, so $b_e=1$ and equality holds in \eqref{eq: pointwise}.

Now assume $r\geq 2$ and let $N=M/e$. Then $N$ has rank $r-1$ on $m-1$ elements. Bases of $N$ correspond exactly to bases of $M$ containing $e$, after deleting $e$. Hence
\[
 b_e=I_{r-1}(N).
\]
Also, independent $(r-1)$-sets of $N$ are independent $(r-1)$-sets of $M$ not containing $e$, and independent $(r-2)$-sets of $N$, after adjoining $e$, give independent $(r-1)$-sets of $M$ containing $e$. Therefore
\begin{equation} \label{eq: decomposition}
 I_{r-1}(M)\geqslant I_{r-1}(N)+I_{r-2}(N). 
\end{equation}

We next compare $I_{r-2}(N)$ and $I_{r-1}(N)$ by a simple double count. Count pairs $(A,C)$, where $C$ is a basis of $N$ and $A\subset C$ has size $r-2$. Each basis $C$ contains $r-1$ such subsets, so the number of pairs is
\[
 (r-1)I_{r-1}(N).
\]
On the other hand, a fixed independent $(r-2)$-set $A$ can be extended to a basis of $N$ by adding at most
\[
 (m-1)-(r-2)=m-r+1
\]
elements. Thus
\[
 (r-1)I_{r-1}(N)
 \leqslant
 (m-r+1)I_{r-2}(N),
\]
and hence
\[
 I_{r-2}(N)
 \geqslant
 \frac{r-1}{m-r+1} I_{r-1}(N).
\]
Combining this with \eqref{eq: decomposition}, we obtain
\[
 I_{r-1}(M)
 \geqslant
 \left(1+\frac{r-1}{m-r+1}\right)I_{r-1}(N)
 =
 \frac{m}{m-r+1} I_{r-1}(N)
 =
 \frac{m}{m-r+1}b_e.
\]
This proves \eqref{eq: pointwise}.

Now use \eqref{eq: pointwise} as follows:
\[
 \sum_{e\in E} b_e^2
 \leqslant
 \frac{m-r+1}{m} I_{r-1}\sum_{e\in E} b_e
 =
 \frac{r(m-r+1)}{m}I_{r-1}I_r.
\]
Dividing by $I_r^2$ gives
\[
 \sum_{e\in E}p_e^2
 \leq
 \frac{r(m-r+1)}{m}\frac{I_{r-1}}{I_r},
\]
which is equivalent to the desired inequality.
\end{proof}

\begin{Prop}[ChatGPT 5.5 Pro]\label{prop:cographic-counterexample}
Let $n\geqslant 3$. Let $M=M^*(K_n)$ be the cographic matroid of the complete graph $K_n$. Let $\textbf{B}$ be a uniformly random basis of $M$. Then $M$ is regular and
\[
 \frac{I_{r-1}(M)}{I_r(M)}
 =
 \frac{(n-1)!}{2n^{n-2}}
 \sum_{j=0}^{n-3}\frac{n^j}{j!},
\]
whereas
\[
 \sum_{e\in E(K_n)}\PP(e\in \textbf{B})^2
 =
 \binom {n}{2}\left(\frac{n-2}{n}\right)^2
 =
 \frac{(n-1)(n-2)^2}{2n}.
\]
Consequently, there is no positive $c$ such that
$$\frac{I_{r-1}(M)}{I_r(M)}\geqslant c\sum_{e\in E(K_n)}\PP(e\in \textbf{B})^2$$
holds true for all $n$.
\end{Prop}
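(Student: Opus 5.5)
We work throughout with graph duality. The cographic matroid $M=M^*(K_n)$ is regular because graphic matroids are regular and regularity is closed under duality. It has rank $r=\binom{n}{2}-(n-1)=\binom{n-1}{2}$. Its bases are the complements of spanning trees of $K_n$, so $I_r(M)=T_n=n^{n-2}$.

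\textbf{Counting $I_{r-1}$.} An $(r-1)$-subset $A$ is independent in $M^*$ exactly when $E\setminus A$ contains a spanning tree. Since $|E\setminus A|=n$, this means $E\setminus A$ is a connected spanning subgraph of $K_n$ with exactly $n$ edges, i.e.\ a connected unicyclic spanning graph. Hence $I_{r-1}(M)$ equals the number $U_n$ of connected unicyclic graphs on $[n]$. We use the classical count
\[
U_n=\frac{(n-1)!}{2}\sum_{j=0}^{n-3}\frac{n^j}{j!}.
\]
To verify it, choose the cycle length $k\geqslant 3$ and its vertex set, giving $\binom{n}{k}\frac{(k-1)!}{2}$ choices. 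Then attach a rooted forest on $[n]$ whose roots are the $k$ cycle vertices; by the generalized Cayley formula there are $kn^{n-k-1}$ of these. Summing, and substituting $j=n-k$, gives
\[
\sum_{k=3}^n \frac{n!}{(n-k)!}\,\frac{n^{n-k-1}}{2}=\frac{(n-1)!}{2}\sum_{j=0}^{n-3}\frac{n^j}{j!}.
\]
Dividing by $n^{n-2}$ yields the claimed formula for $I_{r-1}/I_r$.

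\textbf{The marginals.} By edge symmetry of $K_n$, $\PP(e\in \textbf{T})=\frac{n-1}{\binom n2}=\frac 2n$ for a uniform spanning tree $\textbf{T}$. Thus $\PP(e\in\textbf{B})=1-\frac2n=\frac{n-2}{n}$, and summing the squares over the $\binom n2$ edges gives $\frac{(n-1)(n-2)^2}{2n}$.

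\textbf{Asymptotic comparison.} Write $\sum_{j=0}^{n-3}\frac{n^j}{j!}\leqslant e^n$. Stirling's formula then gives
\[
\frac{I_{r-1}}{I_r}\leqslant \frac{(n-1)!\,e^n}{2n^{n-2}}=\Theta\bigl(n^{3/2}\bigr),
\]
and the truncated sum is in fact about $e^n/2$, giving order $\sqrt{\pi/8}\,n^{3/2}$. On the other side, $\sum_e\PP(e\in\textbf{B})^2\sim n^2/2$. The ratio therefore tends to $0$, so no positive constant $c$ can work for all $n$.

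The only nontrivial input is the unicyclic count. It rests on the forest formula $kn^{n-k-1}$, which is standard and can also be read off from the weighted Cayley--Hurwitz formula in Lemma~\ref{lem:weighted-formulas}. The limit step needs only the crude bound $e^n$ together with Stirling's formula, so it poses no obstacle.
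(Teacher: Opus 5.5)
Your proof is correct and follows the paper's argument essentially step for step. Both use regularity via duality, bases as complements of spanning trees, $I_{r-1}$ as the number of connected unicyclic graphs counted by cycle length with the $kn^{n-k-1}$ rooted-forest formula, and the symmetric marginals $\frac{n-2}{n}$. The only deviation is in the limit step, where you use the crude bound $\sum_{j\leqslant n-3}n^j/j!\leqslant e^n$ instead of the paper's Poisson central-limit argument. This is a legitimate simplification: only an $O(n^{3/2})$ upper bound on $I_{r-1}/I_r$ is needed against $\sum_e\PP(e\in\textbf{B})^2\sim n^2/2$, whereas the paper's version additionally gives the precise asymptotic $\sqrt{\pi/(2n)}$ for the ratio.
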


\begin{proof}
The matroid $M^*(K_n)$ is regular because graphic matroids are regular and regularity is closed under duality. The ground set is $E(K_n)$, and
\[
 |E(K_n)|=\binom n2,
 \qquad
 r=\binom {n}{2}-(n-1).
\]
Bases of $M^*(K_n)$ are complements of spanning trees of $K_n$. Therefore
\[
 I_r(M)=T(K_n)=n^{n-2},
\]
by Cayley's formula.

Let $B$ be a uniformly random basis of $M^*(K_n)$. Then $B=E(K_n)\setminus T$, where $T$ is a uniformly random spanning tree of $K_n$. By symmetry,
\[
 \PP(e\in T)=\frac{n-1}{\binom{n}{2}}=\frac{2}{n}.
\]
Hence
\[
 \PP(e\in \textbf{B})=1-\frac{2}{n}=\frac{n-2}{n},
\]
and therefore
\[
 \sum_{e\in E(K_n)}\PP(e\in B)^2
 =
 \binom{n}{2}\left(\frac{n-2}{n}\right)^2.
\]

It remains to compute $I_{r-1}(M)$. A set $A\subseteq E(K_n)$ of size $r-1$ is independent in the dual matroid $M^*(K_n)$ if and only if its complement contains a spanning tree of $K_n$. Since
\[
 |E(K_n)\setminus A|=\binom n2-(r-1)=n,
\]
this complement is a connected labelled graph on $n$ vertices with $n$ edges, equivalently a connected unicyclic labelled graph. Thus $I_{r-1}(M)$ is the number $U_n$ of connected unicyclic labelled graphs on $n$ vertices.

We count these graphs according to the length $k$ of the unique cycle. First choose the $k$ cycle vertices, then arrange them in a cycle, and finally attach rooted forests to the cycle vertices. The number of rooted forests on $[n]$ rooted at a prescribed set of $k$ roots is $k n^{n-k-1}$. Consequently
\[
 U_n
 =
 \sum_{k=3}^n
 \binom {n}{k}\frac{(k-1)!}{2}\, k n^{n-k-1}
 =
 \frac{(n-1)!}{2}
 \sum_{j=0}^{n-3}\frac{n^j}{j!},
\]
where $j=n-k$. This proves the displayed formula for $I_{r-1}(M)/I_r(M)$.

Observe that
$$\binom{n}{2}\left(\frac{n-2}{n}\right)^2\sim \frac{n^2}{2}.$$
If $X_n$ is a Poisson random variable of mean $n$, then $\P(X_n\leqslant n-3)\to \frac{1}{2}$ by the central limit theorem, and we have
\begin{align*}
\frac{U_n}{T_n}&=\frac{1}{n^{n-2}}\frac{(n-1)!}{2}
 \sum_{j=0}^{n-3}\frac{n^j}{j!}\\
 &=\frac{e^n(n-1)!/2}{n^{n-2}}\cdot e^{-n}\sum_{j=0}^{n-3}\frac{n^j}{j!}\\
 &=\frac{e^n(n-1)!}{2n^{n-2}}\cdot \PP(X_n\leqslant n-3)\\
 &\sim \frac{e^n \sqrt{2\pi (n-1)} \left(\frac{n-1}{e}\right)^{n-1}}{2n^{n-2}}\cdot \frac{1}{2}\\
 &\sim \frac{e\sqrt{2\pi}}{4}(n-1)^{3/2}\left(\frac{n-1}{n}\right)^{n-2}\\
 &\sim \sqrt{\frac{\pi}{8}}n^{3/2}.
\end{align*}
Hence
$$\frac{U_n/T_n}{\sum_{e\in E(K_n)}\PP(e\in \textbf{B})^2}\sim \sqrt{\frac{\pi}{2n}}$$
showing that this ratio can be arbitrarily small.

\end{proof}

\subsection{Non-monotonicity}

A natural idea to prove that
$Q(G)\geqslant Q(K_n)$
for every connected graph $G$ is to show that for any edge $e\notin E(G)$ we have
$$Q(G)\geqslant Q(G+e).$$
However, this inequality fails in certain cases. For instance, let $G=K_{2,m}$, where $m\geqslant 7$ and let $e$ be the edge joining the vertices of degree $m$. Indeed, let $B_m=K_{2,m}+(u,v)$, where $u$ and $v$ are the vertices of degree $m$ in $K_{2,m}$. Then
$$T(K_{2,m})=m2^{m-1},\ \  T(B_m)=(m+2)2^{m-1}   ,\ \ F(K_{2,m})=(m+3)3^{m-1},\ \ F(B_m)=(m+6)3^{m-1}$$
and so
$$F(B_m)T(K_{2,m})-F(K_{2,m})T(B_m)=(m-6)6^{m-1}$$
which is positive if $m\geqslant 7$.

However, the monotonicity might nevertheless hold once $G$ is sufficiently dense with a sufficiently large minimum cut. Then, combined with the tools in Section~\ref{forest-tree-ratio-2-sect} one may give a simpler proof that the complete graph $K_n$ minimizes $Q(G)$ among simple connected graphs on $n$ vertices. 

\subsection{Dense graph limits}

In a forthcoming paper, we plan to publish the following result. 

\begin{Th}
Let $(G_n)_n$ be a sequence of dense graphs converging to a graphon $W$ whose minimum cut sizes $\delta(G_n)$ satisfy 
$$\liminf_{n\to \infty}\frac{\delta(G_n)}{v(G_n)}>0.$$
Let
$$d_W(x)=\int_0^1 W(x,y)\ dy$$
and
$$
\lambda_W=
\int_0^1\frac{dx}{d_W(x)}
-
\frac{1}{2}\int_{[0,1]^2}\frac{W(x,y)}{d_W(x)d_W(y)}\,dx\,dy.
$$
Then 
$$\lim_{n\to \infty}\frac{F_2(G_n)}{T(G_n)}=\lambda_W.$$
Furthermore, for every fixed $k$ we have
$$\lim_{n\to \infty}\frac{F_k(G_n)}{T(G_n)}=\frac{\lambda_W^{k-1}}{(k-1)!}$$
and
$$\lim_{n\to \infty}\frac{F(G_n)}{T(G_n)}=e^{\lambda_W}.$$
\end{Th}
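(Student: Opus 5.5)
The plan has three steps: identify the limit of $F_2(G_n)/T(G_n)$ through the RSW-formula, get a lower bound on $F/T$ from Theorem~\ref{ratio-minimum}, and get matching upper bounds on every $F_k/T$ from ultra-log-concavity. Throughout, write $n=v(G_n)$, $d_u$ for degrees, and $\delta_n=\delta(G_n)\geqslant cn$ for some fixed $c>0$ and all large $n$.

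\textbf{Step 1: $F_2/T\to\lambda_W$.} I would use the RSW-formula (Theorem~\ref{thm: RSW-formula}) together with the heuristic
$$R_{\eff}(u,v)\approx \frac1{d_u}+\frac1{d_v}\qquad\text{and}\qquad R_{\eff}(q,u)-R_{\eff}(q,v)\approx \frac1{d_u}-\frac1{d_v}$$
for typical $u,v,q$. Substituting these approximations, a short computation collapses the formula to
$$\frac14\sum_{uv\in E}\Bigl(\frac1{d_u}+\frac1{d_v}\Bigr)^2+\frac34\sum_{uv\in E}\Bigl(\frac1{d_u}-\frac1{d_v}\Bigr)^2=\sum_u\frac1{d_u}-\sum_{uv\in E}\frac1{d_ud_v}.$$
With $d_u\approx n\,d_W(x_u)$ the right-hand side converges to $\lambda_W$ under graphon convergence. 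This uses that $\delta_n\geqslant cn$ forces $d_W\geqslant c$, so $1/d_W$ is bounded and the functionals are continuous in cut distance.

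To make this rigorous I need two facts.
\begin{itemize}
\item[(a)] Uniformly over all pairs, $R_{\eff}(u,v)=O(1/n)$.
\item[(b)] For all but $o(n^2)$ edges and all but $o(n)$ vertices $q$, the approximations above hold with error $o(1/n)$.
\end{itemize}
Given (a) and (b), each edge contributes $O(1/n^2)$ to the RSW sums, so the exceptional edges contribute $o(1)$ and the rest match the main term.

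For (b) I would expand $L^{+}$ through the random walk matrix $P=D^{-1}A$:
$$R_{\eff}(u,v)=(e_u-e_v)^{T}\sum_{t\geqslant0}P^tD^{-1}(e_u-e_v).$$
The $t=0$ term is exactly $1/d_u+1/d_v$. The $t\geqslant1$ terms are $O(1/n^2)$ per step, and they should be negligible except inside sparse-cut structure. The example of two cliques joined by $cn$ edges shows that the walk need not mix, but there the badly behaved edges are only $O(n)$ in number. So the argument should be local: use a regularity partition of $G_n$, prove approximate formulas within well-connected parts, and bound the crossing edges crudely by (a).

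\textbf{This localization, together with (a), is the main obstacle.} For (a) I would first try Thomson's principle with an explicit flow. Route the unit flow from $u$ to $v$ through the $\geqslant cn$ neighbours of each endpoint, and connect the two neighbourhoods by many short edge-disjoint paths; these should exist because the min cut is linear and the graph is dense (a Menger argument combined with counting of short paths).

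\textbf{Step 2: lower bound for $F/T$.} By Theorem~\ref{ratio-minimum} applied to the graphic matroid,
$$\ln\frac{F(G_n)}{T(G_n)}\geqslant \frac{F_2(G_n)}{T(G_n)}\,\delta_n\ln\Bigl(1+\frac1{\delta_n}\Bigr).$$
Since $\delta_n\to\infty$, the factor $\delta_n\ln(1+1/\delta_n)$ tends to $1$, and Step 1 gives
$$\liminf_{n\to\infty}\frac{F(G_n)}{T(G_n)}\geqslant e^{\lambda_W}.$$

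\textbf{Step 3: upper bounds on $F_k/T$ and conclusion.} I would use Mason's ultra-log-concavity of the independence numbers $I_j$ of a matroid (Br\"and\'en--Huh), applied with $j=n-k$ counting edges, so that $I_{n-k}=F_k$. Ultra-log-concavity compares consecutive ratios. With $m=e(G_n)$, the binomial correction factors are $1+O(k/n)$ for fixed $k$, because $m=\Theta(n^2)$. Iterating from $F_2/F_1=F_2/T\to\lambda_W$ should give
$$\frac{F_{k+1}}{F_k}\leqslant \frac{\lambda_W+o(1)}{k}\,(1+O(k/n))$$
uniformly for $k\leqslant K$ with $K$ fixed. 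It follows that
$$\limsup_{n\to\infty}\frac{F_k(G_n)}{T(G_n)}\leqslant \frac{\lambda_W^{k-1}}{(k-1)!}.$$
For the tail $\sum_{k>K}F_k/T$, I would use the cruder consequence of the same inequality valid for all $k$: the ratios are at most $C/k$ up to $k\leqslant\sqrt n$, and for larger $k$ I would compare with $\binom{m}{n-k}$. This should make the tail $o(1)$ as $K\to\infty$ uniformly in $n$, giving $\limsup F/T\leqslant e^{\lambda_W}$.

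Together with Step 2, this proves $F/T\to e^{\lambda_W}$. Since each $F_k/T$ is asymptotically bounded above by $\lambda_W^{k-1}/(k-1)!$, these upper bounds sum to the limit, and the tails are uniformly small, every individual term must converge to its bound. This gives $F_k/T\to\lambda_W^{k-1}/(k-1)!$ for each fixed $k$.
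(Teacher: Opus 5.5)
The paper only announces this theorem and defers its proof to a forthcoming paper, so there is no argument to compare with. Your Step~2 is the natural way for Theorem~\ref{ratio-minimum} to enter, and Step~1 is a sound outline. The genuine gap is Step~3.

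Mason's ultra-log-concavity (Br\"and\'en--Huh) says $I_j^2\geqslant\frac{j+1}{j}\cdot\frac{m-j+1}{m-j}\,I_{j-1}I_{j+1}$. With $I_j=F_{n-j}$, i.e.\ $j=n-k$, this reads
\[
\frac{F_{k+1}}{F_k}\leqslant\frac{n-k}{n-k+1}\cdot\frac{m-n+k}{m-n+k+1}\cdot\frac{F_k}{F_{k-1}}.
\]
For fixed $k$ both prefactors are $1-O(1/n)$. The factor $\frac{j}{j+1}$ is far from $1$ only for small $j$, at the bottom of the sequence. At the top, where forests with few components sit, the binomial normalization is flat.

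Iterating therefore gives only $F_{k+1}/F_k\leqslant F_2/F_1$, i.e.\ $F_k/T\lesssim\lambda_W^{k-1}$, with no $1/(k-1)!$. Summed over $k$ this diverges when $\lambda_W\geqslant 1$, for instance for the complete bipartite graphons, where $\lambda_W=\beta+1/\beta-1\geqslant 1$. When $\lambda_W<1$ it gives $1/(1-\lambda_W)>e^{\lambda_W}$. So neither $\limsup F_k/T\leqslant\lambda_W^{k-1}/(k-1)!$ nor $\limsup F/T\leqslant e^{\lambda_W}$ follows, and your tail estimate fails for the same reason. The lower bound of Step~2 alone says nothing about the individual limits of $F_k/T$ for $k\geqslant 3$.

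No inequality valid for all matroids can supply the missing factor. As the paper notes when discussing how natural Theorem~\ref{ratio-minimum} is, truncate $M(K_n)$ to rank $r'=n-2$. This matroid has:
\begin{itemize}
\item $\binom n2$ elements;
\item every independent $(r'-1)$-set contained in at least $2n-3$ bases;
\item $I'_{r'-1}/I'_{r'}=F_3(K_n)/F_2(K_n)\to\frac14$.
\end{itemize}
Yet $I'_{r'-2}/I'_{r'-1}=F_4(K_n)/F_3(K_n)\to\frac16>\frac12\cdot\frac14$. Your Step~3 uses only ultra-log-concavity, the top ratio and $m=\Theta(n^2)$, so it would prove a false statement here.

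Even for $K_n$ itself, \eqref{eq:fn2} and \eqref{eq:fn3} give $F_2(K_n)^2-2T(K_n)F_3(K_n)=\frac{T(K_n)^2(n-1)(84-10n)}{4n^4}<0$ for $n\geqslant 9$. So the needed upper bound holds only asymptotically and must use graph structure. One route is via \eqref{eq: 3}, where $\Delta(A)$ is the number of spanning trees containing $A$. You would need:
\begin{itemize}
\item an approximate reverse of \eqref{eq: 4}, $\Delta(A)\geqslant(1-o(1))\prod_{e\in B\setminus A}\Delta(B\setminus\{e\})$, for typical spanning trees $B$ and bounded $|B\setminus A|$ (near-independence of a few fundamental cuts, which is where the linear min cut enters);
\item concentration of $\sum_{e\in\mathbf T}1/\Delta(\mathbf T-e)$, whose mean is $F_2/T$, for a uniform spanning tree $\mathbf T$;
\item a tail bound uniform in $n$.
\end{itemize}

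Separately, obstacle (b) of Step~1 needs no regularity partition. Take $\mathcal A=D^{-1/2}AD^{-1/2}$ with orthonormal eigenpairs $(\mu_k,\phi_k)$, $\mu_1=1$, and set $\psi_k=D^{-1/2}\phi_k$. Then
\[
R_{\eff}(u,v)-\frac1{d_u}-\frac1{d_v}=\sum_{k\geqslant 2}\frac{\mu_k}{1-\mu_k}\bigl(\psi_k(u)-\psi_k(v)\bigr)^2 .
\]
Since $\sum_{uv\in E}(\psi_k(u)-\psi_k(v))^2=1-\mu_k$, the absolute errors summed over edges are at most $\sum_{k\geqslant 2}|\mu_k|\leqslant\sqrt{n\,\mathrm{tr}\,\mathcal A^2}=O(\sqrt n)$, because $d_u\geqslant cn$. $\mathrm{RSW}_2$ is handled similarly after averaging over $q$ with weights $d_q$, using (a) to get $1-\mu_2\geqslant 1/(e(G_n)\max R_{\eff})=\Omega(1/n)$.
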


Clearly, this statement is motivated by the fact that for $G_n=K_n$  we have $\lambda_J=\frac{1}{2}$ for the graphon $J(x,y)\equiv 1$. 

Another notable case concerns complete bipartite graphs $K_{n,\lfloor\beta n\rfloor}$ with a fixed $\beta>0$. It follows from the RSW-formula  that for the sequence $K_{n,\lfloor\beta n\rfloor}$ we have
$$\lim_{n\to \infty}\frac{F_2(K_{n,\lfloor\beta n\rfloor})}{T(K_{n,\lfloor\beta n\rfloor})}=\beta+\frac{1}{\beta}-1$$
and Stark \cite{stark2013asymptotic} proved that, indeed,
$$\lim_{n\to \infty}\frac{F(K_{n,\lfloor\beta n\rfloor})}{T(K_{n,\lfloor\beta n\rfloor})}=\exp\left(\beta+\frac{1}{\beta}-1\right).$$

\subsection{Stronger conjectures}

There are stronger conjectures than the one proved in this paper. Recall that $F_s(G)$ is the number of spanning forests of $G$ with $s$ components.

\begin{Conj} \label{Fs-conjecture}
Let $G$ be a simple connected graph on $n$ vertices. If $2\leqslant s\leqslant n$, then
$$\frac{F_s(G)}{T(G)}\geqslant \frac{F_s(K_n)}{T(K_n)}$$
with equality if and only if $G=K_n$.
\end{Conj}

The case $s=2$ is Lemma~\ref{2-forests-complete-graph}. An even stronger conjecture is the following.

\begin{Conj} \label{Fs-Fs-1-conjecture}
Let $G$ be a simple connected graph on $n$ vertices. Then for any $2\leqslant s \leqslant n$ we have
$$\frac{F_s(G)}{F_{s-1}(G)}\geqslant \frac{F_s(K_n)}{F_{s-1}(K_n)}$$
with equality if and only if $G=K_n$.
\end{Conj}

This conjecture would follow from the following one. Before we state it, we need a definition.

\begin{Def}
Let $H=(A,B,E)$ be a bipartite graph. For a set $S\subseteq A$ let $N_H(S)=\{u\in B\ |\ \exists v\in S: (u,v)\in E\}$, that is, the neighbors of $S$. The graph $H$ has the normalized matching property if
for every set $S\subseteq A$ we have
$$|N_H(S)|\geqslant \frac{|B|}{|A|}|S|.$$
\end{Def}

An alternative more symmetric formulation of the normalized matching property is that there exists a $w:E(H)\to \mathbb{R}_{\geq 0}$ such that
$$\sum_{e\in E: v\in e}w(e)=\begin{cases} \frac{1}{|A|} & \mbox{if}\ v\in A, \\ \frac{1}{|B|} & \mbox{if}\ v\in B. \end{cases}$$

\begin{Conj} \label{NMP-forests}
Let $s\geq 2$, and let $\mathcal{F}_{n,s}$ be the set of forests of $K_n$ with $s$ components. Let $H_{n,s}=(\mathcal{F}_{n,s},\mathcal{F}_{n,s-1},E_{n,s})$ be the bipartite graph with parts $\mathcal{F}_{n,s}$ and $\mathcal{F}_{n,s-1}$ with an edge between $F\in \mathcal{F}_{n,s},\ F'\in \mathcal{F}_{n,s-1}$ if $F\subseteq F'$. Then for every $2\leqslant s\leqslant n$ the bipartite graph $H_{n,s}$ has the normalized matching property.
\end{Conj}

The reason why Conjecture~\ref{NMP-forests} implies Conjecture~\ref{Fs-Fs-1-conjecture} is that we can take $S_G\subseteq \mathcal{F}_{n,s-1}$ to be the set of forests of $G$ with $s-1$ components. (Because of the symmetric formulation, it does not matter on which side we take a subset.) Then $N_{H_{n,s}}(S_G)$ is just the set of forests of $G$ with $s$ components. Applying the normalized matching property to $S_G$ immediately gives Conjecture~\ref{Fs-Fs-1-conjecture}. 

Interestingly, the statement that $H_{n,s}$ has the normalized matching property for every $2\leqslant s\leqslant n$ is equivalent to the property that the poset of forests has the so-called LYM-property: every antichain $\mathcal{A}$ of the forests of $K_n$ satisfies the inequality
$$\sum_{s=1}^n\frac{|\mathcal{A}\cap \mathcal{F}_{n,s}|}{|\mathcal{F}_{n,s}|}\leqslant 1.$$
This is false in general: the book graph with five triangles already fails the LYM property.

\bigskip

\noindent \textbf{Use of large language models.} We used ChatGPT 5.5 Pro and ChatGPT 5.6 Sol. Throughout the text, we indicate which model contributed each result. The authors take full
responsibility for the correctness of the arguments, citations, and the exposition of the paper. 

\bibliography{bibliography}
\bibliographystyle{plain}

\end{document}